\def\cF{\mathcal{F}}
\def\cG{\mathcal{G}}
\def\cP{\mathcal{P}}
\def\cQ{\mathcal{Q}}
\def\cT{\mathcal{T}}
\def\C{{\mathbb C}}
\def\E{{\mathbb E}}
\def\N{{\mathbb N}}
\def\P{{\mathbb P}}
\def\R{{\mathbb R}}
\def\Z{{\mathbb Z}}
\def\pv{\overrightarrow{p}}
\def\qv{\overrightarrow{q}}
\def\qvc{(\overrightarrow{q})'}
\def\Lqvc{L_{(\overrightarrow{q})'}}
\def\n{\nonumber}
\def\vpc{(\overrightarrow{p})'}
\author{Kristóf Szarvas \thanks{This research was supported by the Hungarian National Research, Development and Innovation Office - NKFIH, K115804 and KH130426.}
\\ Department of Numerical Analysis\\
E\"otv\"os L. University \\
P\'azm\'any P. s\'et\'any 1/C., H-1117 Budapest, Hungary \\
e-mail: kristof@inf.elte.hu\\
\and Ferenc Weisz
\\ Department of Numerical Analysis\\
E\"otv\"os L. University \\
P\'azm\'any P. s\'et\'any 1/C., H-1117 Budapest, Hungary \\
e-mail: weisz@inf.elte.hu}
\title{Mixed martingale Hardy spaces
}
\date{}
\newtheorem{thm}{Theorem}
\newtheorem{cor}{Corollary}
\newtheorem{lem}{Lemma}
\newtheorem{remark}{Remark}
\newenvironment{proof}{\begin{trivlist} \item[] \textbf{Proof.}}{\quad \rule{2mm}{2mm} \end{trivlist}}
\begin{document}

\maketitle

\begin{abstract}
In this paper we consider the martingale Hardy spaces defined with the help of the mixed $L_{\pv}$-norm. Five mixed martingale Hardy spaces will be investigated: $H_{\pv}^{s}$, $H_{\pv}^S$, $H_{\pv}^M$, $\cP_{\pv}$ and $\cQ_{\pv}$. Several results are proved for these spaces, like atomic decompositions, Doob's inequality, boundedness, martingale inequalities and the generalization of the well-known Burkholder-Davis-Gundy inequality.
\end{abstract}

{\bf 2010 AMS subject classifications:} Primary 42B30, Secondary 60G42, 42B35, 42B25.

{\bf Key words and phrases:} mixed Lebesgue spaces, mixed martingale Hardy spaces, atomic decompositions, martingale inequalities, Doob's inequality, weighted maximal inequality, Burkholder-Davis-Gundy inequality.


\section{Introduction}

The mixed Lebesgue spaces were introduced in 1961 by Benedek and Panzone \cite{Benedek1961}. They considered the Descartes product $(\Omega,\cF,\P)$ of the probability spaces $(\Omega_{i},\cF^{i},\P_{i})$,
where $\Omega = \prod_{i=1}^{d} \Omega_{i}$, $\cF$ is generated by $\prod_{i=1}^{d} \cF^{i}$ and $\P$ is generated by $\prod_{i=1}^{d} \P_{i}$. The mixed $L_{\pv}$-norm of the measurable function $f$ is defined as a number obtained after taking successively the $L_{p_{1}}$-norm of $f$ in the variable $x_{1}$, the $L_{p_{2}}$-norm of $f$ in the variable $x_{2}$, $\ldots$, the $L_{p_{d}}$-norm of $f$ in the variable $x_{d}$. Some basic properties of the spaces $L_{\pv}$ were proved in \cite{Benedek1961}, such as the well known Hölder's inequality or the duality theorem for $L_{\pv}$-norm (see Lemma \ref{norma-sup}). The boundedness of operators on mixed-norm spaces have been studied for instance by Fernandez \cite{LassFernandez1987} and Stefanov and Torres \cite{Stefanov2004}. Using the mixed Lebesgue spaces, Lizorkin \cite{Lizorkin1971} considered Fourier integrals and estimations for convolutions. Torres and Ward \cite{Torres2015} gave the wavelet characterization of the space $L_{\pv}(\R^n)$. For more about mixed-norm spaces the papers \cite{Antonic2016,Chen,Cleanthous2017a,Hart2018,Ho2016,Ho2018,Huang2019b,Igari1986,Nogayama2019,Stefanov2004} are referred.

Since 1970, the theory of Hardy spaces has been developed very quickly (see e.g. Fefferman and Stein \cite{fest}, Stein \cite{st1}, Grafakos \cite{gra}). Parallel, a similar theory was evolved for martingale Hardy spaces (see e.g. Garsia \cite{ga}, Long \cite{lo} and Weisz \cite{wk}). Recently several papers were published about the generalization of Hardy spaces. For example, (anisotropic) Hardy spaces with variable exponents were considered in Nakai and Sawano \cite{Nakai2012}, Yan, et al. \cite{Yan2016}, Jiao et al. \cite{Jiao2019}, Liu et al. \cite{Liu2018a} and \cite{Liu2019}. Moreover Musielak-Orlicz-Hardy spaces were studied in Yang et al. \cite{Yang2017}. These results were also investigated for martingale Hardy spaces in Jiao et al. \cite{Jiao2016} and \cite{w-mart-var} and Xie et al. \cite{Xie2019}. The mixed norm classical Hardy spaces were intensively studied by Huang et al. in \cite{Huang2019}, \cite{Huang2019a} and \cite{Huang2019b}. In this paper we will develop a similar theory for mixed norm martingale Hardy spaces.

The classical martingale Hardy spaces ($H_{p}^{s}$, $H_{p}^{S}$, $H_{p}^{\ast}$, $\cP_{p}$, $\cQ_{p}$) have a long history and the results of this topic can be well applied in the Fourier analysis. In the celebrated work of Burkholder and Gundy \cite{bugu} it was proved that the $L_{p}$ norms of the maximal function and the quadratic variation, that is the spaces $H_{p}^M$ and $H_{p}^S$, are equivalent for $1 < p < \infty$. In the same year, Davis \cite{da1} extended this result for $p=1$. In the classical case, Herz \cite{Herz1974} and Weisz \cite{wk} gave one of the most powerful techniques in the theory of martingale Hardy spaces, the so-called atomic decomposition. Some boundedness results, duality theorems, martingale inequalities and interpolation results can be proved with the help of atomic decomposition. Details for the martingale Hardy spaces can be found in Burkholder and Gundy \cite{Burkholder1966}, \cite{bu1}, \cite{bugu}, Garsia \cite{ga}, Long \cite{lo} or Weisz \cite{wdual1,wk}. For the application of martingale Hardy spaces in Fourier analysis see Gát \cite{Gat2018,Gat2019}, Goginava \cite{gog3,gog6} or Weisz \cite{wk,wk2}.

In this paper we will introduce five mixed martingale Hardy spaces: $H_{\pv}^{s}$, $H_{\pv}^{S}$, $H_{\pv}^{M}$, $\cP_{\pv}$ and $\cQ_{\pv}$. In Section \ref{doob's inequality}, Doob's inequality will be proved, that is, we will show that
$$
\left\| \sup_{n \in \N} \E_{n}f \right\|_{\pv} \leq c \, \|f\|_{\pv}
$$
for all $f \in L_{\pv}$, where $1 < \pv < \infty$. In Section \ref{atomic decomp}, we give the atomic decomposition for the five mixed martingale Hardy spaces. Using the atomic decompositions and Doob's inequality, the boundedness of general $\sigma$-subadditive operators from $H_{\pv}^{s}$ to $L_{\pv}$, from $\cP_{\pv}$ to $L_{\pv}$ and from $\cQ_{\pv}$ to $L_{\pv}$ can be proved (see Theorems \ref{T korl Hps-bol} and \ref{T korl HpS-bol es HpM-bol}). With the help of these general boundedness theorems, several martingale inequalities will be proved in Section \ref{mart ineq} (see Corollary \ref{mart-ineq-7}). We will show, that if the stochastic basis $(\cF_{n})$ is regular, then the five martingale Hardy spaces are equivalent. As a consequence of Doob's inequality, the well-known Burkholder-Davis-Gundy inequality can be shown. Moreover, if the stochastic basis is regular, then the so-called martingale transform is bounded on $L_{\pv}$.

We denote by $C$ a positive constant, which can vary from line to
line, and denote by $C_p$ a constant depending only on $p$. The symbol $A \sim B$
means that there exist constants $\alpha, \beta > 0$ such that $\alpha A \leq B \leq \beta A$.

\section{Backgrounds}

\subsection{Mixed Lebesgue spaces}

We will start with the definition of the mixed Lebesgue spaces. To this, let $1 \leq d \in \N$ and $(\Omega_{i}, \mathcal{F}^{i}, \P_{i})$ be probability spaces for $i=1,\dots,d$, and $\pv := \left(p_{1},\ldots,p_{d}\right)$ with $0 < p_{i} \leq \infty$. Consider the product space $(\Omega,\cF,\P)$, where $\Omega = \prod_{i=1}^{d} \Omega_{i}$, $\cF$ is generated by $\prod_{i=1}^{d} \cF^{i}$ and $\P$ is generated by $\prod_{i=1}^{d} \P_{i}$. A measurable function $f : \Omega \to \R$ belongs to the mixed $L_{\pv}$ space if
\begin{eqnarray*}
\left\|f\right\|_{\pv} := \left\|f\right\|_{(p_{1},\ldots,p_{d})} &:=& \left\| \ldots \left\|f\right\|_{L_{p_{1}}(dx_{1})} \ldots \right\|_{L_{p_{d}}(dx_{d})} \\
&=& \left( \int_{\Omega_{d}} \ldots \left( \int_{\Omega_{1}} \left| f(x_{1},\ldots,x_{d})\right|^{p_{1}} \, dx_{1} \right)^{p_{2}/p_{1}} \ldots dx_{d} \right)^{1/p_{d}} < \infty
\end{eqnarray*}
with the usual modification if $p_{j} = \infty$ for some $j \in \left\{1,\ldots,d\right\}$. If for some $0 < p \leq \infty$, $\pv = \left(p, \ldots, p\right)$, then we get back the classical Lebesgue space, that is, in this case, $L_{\pv} = L_{p}$. Throughout the paper, $0<\pv \leq \infty$ will mean that the coordinates of $\pv$ satisfy the previous condition, e.g., for all $i=1,\ldots,d$, $0 < p_{i} \leq \infty$. The conjugate exponent vector of $\pv$ will be denoted by $\vpc$, that is, if $\vpc = \left(p_{1}', \ldots,p_{d}'\right)$, then $1/p_{i} + 1/p_{i}' = 1$ $(i=1,\ldots,d)$. For $\alpha > 0$, $\pv / \alpha := \left(p_{1}/\alpha, \ldots, p_{d}/\alpha\right)$. Benedek and Panzone \cite{Benedek1961} proved some basic properties for the mixed Lebesgue space.

\begin{lem}\label{holder}
If $1 \leq \pv \leq \infty$, then for all $f \in L_{\pv}$ and $g \in L_{\vpc}$, $fg \in L_{1}$ and
$$
\left\|fg\right\|_{1} = \int_{\Omega_{d}} \ldots \int_{\Omega_{1}} \left| f(x) g(x) \right| \, d\P(x) \leq \left\|f\right\|_{\pv} \left\|g\right\|_{\vpc}.
$$
\end{lem}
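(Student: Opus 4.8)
The plan is to argue by induction on the dimension $d$, peeling off the innermost variable $x_1$ at each step and reducing to the classical one-dimensional Hölder inequality. Note first that the stated equality $\|fg\|_1 = \int_{\Omega_d} \ldots \int_{\Omega_1} |f(x)g(x)| \, d\P(x)$ is nothing more than Tonelli's theorem applied to the nonnegative measurable function $|fg|$ on the product space $(\Omega, \cF, \P)$, so only the inequality $\|fg\|_1 \leq \|f\|_{\pv}\|g\|_{\vpc}$ carries genuine content.

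For the base case $d=1$ the assertion is exactly the classical Hölder inequality on the probability space $(\Omega_1, \cF^1, \P_1)$, valid for every $1 \leq p_1 \leq \infty$ with the convention $p_1' = 1$ when $p_1 = \infty$ and $p_1' = \infty$ when $p_1 = 1$. For the inductive step I would write $\Omega = \Omega_1 \times \Omega'$, where $\Omega' = \prod_{i=2}^d \Omega_i$ carries the product measure $\P'$ and the exponent vectors $(p_2,\ldots,p_d)$ and $(p_2',\ldots,p_d')$ are again conjugate. For each fixed $x' \in \Omega'$, the one-dimensional Hölder inequality in the variable $x_1$ gives
$$
\int_{\Omega_1} |f(x_1,x')\, g(x_1,x')| \, dx_1 \leq F(x')\, G(x'),
$$
where $F(x') := \|f(\cdot,x')\|_{L_{p_1}(dx_1)}$ and $G(x') := \|g(\cdot,x')\|_{L_{p_1'}(dx_1)}$.

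Integrating this bound over $\Omega'$, using Tonelli to identify the left-hand side with $\|fg\|_1$, and then applying the inductive hypothesis on $(\Omega',\P')$ to the functions $F$ and $G$ yields
$$
\|fg\|_1 \leq \int_{\Omega'} F(x') G(x') \, d\P'(x') \leq \|F\|_{(p_2,\ldots,p_d)} \, \|G\|_{(p_2',\ldots,p_d')}.
$$
The proof then closes by observing that, straight from the recursive definition of the mixed norm, $\|F\|_{(p_2,\ldots,p_d)} = \|f\|_{\pv}$ and $\|G\|_{(p_2',\ldots,p_d')} = \|g\|_{\vpc}$, since $F$ and $G$ are precisely the innermost $L_{p_1}$- and $L_{p_1'}$-norms already built into those iterated expressions; in particular $f \in \Lpv$ forces $\|F\|_{(p_2,\ldots,p_d)} < \infty$, so the inductive hypothesis is applicable. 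Since $1 \leq \pv \leq \infty$ guarantees $1 \leq p_i \leq \infty$ for every coordinate, the hypotheses propagate down the recursion.

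The main obstacle I anticipate is not the chain of inequalities but the measure-theoretic bookkeeping around it: one must verify that the partial-norm functions $F$ and $G$ are $\cF'$-measurable on $\Omega'$ (for instance by monotone approximation through simple functions) so that the iterated integrals and the inductive hypothesis are actually meaningful, and one must carry the boundary conventions for $p_j \in \{1,\infty\}$ uniformly through every stage of the induction. Checking this measurability together with the repeated applications of Tonelli on the product structure is the delicate part, whereas the Hölder estimates themselves are routine.
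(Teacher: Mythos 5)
Your proof is correct. The paper itself gives no proof of this lemma --- it is quoted without proof from Benedek and Panzone \cite{Benedek1961} --- and your induction on the dimension, peeling off the innermost variable with the classical H\"older inequality and identifying $\|F\|_{(p_2,\ldots,p_d)}=\|f\|_{\pv}$ from the recursive definition of the mixed norm, is exactly the standard argument from that source, with the measurability of the partial-norm functions correctly flagged as the only delicate point.
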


\begin{lem}\label{norma-sup}
If $1 \leq \pv \leq \infty$ and $f \in L_{\pv}$, then
$$
\left\| f \right\|_{\pv} = \sup_{\|g\|_{\vpc} \leq 1} \left| \int_{\Omega} f g \, d\P \right|.
$$
\end{lem}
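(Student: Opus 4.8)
The inequality $\sup_{\|g\|_{\vpc}\le 1}\bigl|\int_\Omega fg\,d\P\bigr|\le \|f\|_{\pv}$ is immediate from the mixed Hölder inequality (Lemma \ref{holder}): for any $g$ with $\|g\|_{\vpc}\le 1$ one has $\bigl|\int_\Omega fg\,d\P\bigr|\le \|fg\|_{1}\le \|f\|_{\pv}\|g\|_{\vpc}\le \|f\|_{\pv}$, and taking the supremum gives the bound. The entire content of the lemma is therefore the reverse inequality, and the plan is to construct an extremal $g$ explicitly so that $\|g\|_{\vpc}\le 1$ and $\int_\Omega fg\,d\P=\|f\|_{\pv}$; the case $\|f\|_{\pv}=0$ is trivial, since then $f=0$ a.e.

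I would proceed by induction on the number of coordinates $d$, the base case $d=1$ being the classical one-dimensional $L_{p}$ duality, whose extremizer is $g=\operatorname{sgn}(f)\,|f|^{p-1}/\|f\|_{p}^{p-1}$ (with the usual modifications when $p\in\{1,\infty\}$). For the inductive step, write the norm as an iteration: put $F(x_{d}):=\|f(\cdot,x_{d})\|_{(p_{1},\dots,p_{d-1})}$, the inner mixed norm over the first $d-1$ variables, so that $\|f\|_{\pv}=\|F\|_{L_{p_{d}}(dx_{d})}$. By the induction hypothesis there is, for a.e.\ fixed $x_{d}$, a function $g_{0}(\cdot,x_{d})$ with $\|g_{0}(\cdot,x_{d})\|_{(p_{1},\dots,p_{d-1})'}\le 1$ and $\int f(\cdot,x_{d})g_{0}(\cdot,x_{d})=F(x_{d})$, and by one-dimensional duality there is $h\ge 0$ with $\|h\|_{L_{p_{d}'}}\le 1$ and $\int_{\Omega_{d}}Fh\,dx_{d}=\|F\|_{L_{p_{d}}}$. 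Setting $g:=g_{0}\cdot h$ and using Fubini gives $\int_\Omega fg\,d\P=\int_{\Omega_{d}}h(x_{d})F(x_{d})\,dx_{d}=\|f\|_{\pv}$, while the inner $(p_{1},\dots,p_{d-1})'$-norm of $g(\cdot,x_{d})$ equals $h(x_{d})\|g_{0}(\cdot,x_{d})\|_{(p_{1},\dots,p_{d-1})'}\le h(x_{d})$, whence $\|g\|_{\vpc}\le \|h\|_{L_{p_{d}'}}\le 1$.

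To make this rigorous without invoking a measurable selection of the extremizers $g_{0}(\cdot,x_{d})$, I would write $g$ in closed form through the partial norms $A_{j}$, where $A_{0}:=|f|$ and $A_{j}$, a function of $(x_{j+1},\dots,x_{d})$, is the mixed $(p_{1},\dots,p_{j})$-norm of $f$ in the first $j$ variables (so $A_{d}=\|f\|_{\pv}$): namely $g:=\operatorname{sgn}(f)\prod_{j=1}^{d}\bigl(A_{j-1}/A_{j}\bigr)^{p_{j}-1}$. Each $A_{j}$ is measurable, so $g$ is automatically measurable, and integrating $fg$ successively in $x_{1},\dots,x_{d}$ telescopes, since $\int_{\Omega_{j}}A_{j-1}^{p_{j}}\,dx_{j}=A_{j}^{p_{j}}$ at each stage, reducing $A_{j-1}\bigl(A_{j-1}/A_{j}\bigr)^{p_{j}-1}$ to $A_{j}$; the final value is $A_{d}=\|f\|_{\pv}$. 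The same telescoping, now using $(p_{j}-1)p_{j}'=p_{j}$, shows $\|g\|_{\vpc}=1$. The points requiring care are precisely the degeneracies: the endpoint exponents $p_{j}\in\{1,\infty\}$, where the power $A_{j-1}^{p_{j}-1}$ must be replaced by the appropriate $L_{\infty}$/$L_{1}$ extremizer, and the null sets where some $A_{j}$ vanishes, where the corresponding factor is set to $0$; off $\{f=0\}$ these adjustments are harmless and leave the telescoping identities intact, which is the main technical obstacle to address.
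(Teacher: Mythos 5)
The paper offers no proof of this lemma at all: it is quoted as one of the basic properties established by Benedek and Panzone \cite{Benedek1961}, so there is nothing internal to compare against. Your argument is in substance the standard Benedek--Panzone duality proof. The easy direction via Lemma \ref{holder} is fine, and the closed-form extremizer $g=\operatorname{sgn}(f)\prod_{j=1}^{d}(A_{j-1}/A_{j})^{p_{j}-1}$ is the right object: since the $j$-th factor is the only one depending on $x_{j}$ among $x_{1},\dots,x_{j}$, the identity $\int_{\Omega_{j}}A_{j-1}^{p_{j}}\,dx_{j}=A_{j}^{p_{j}}$ does telescope $\int fg$ down to $A_{d}=\|f\|_{\pv}$, and $(p_{j}-1)p_{j}'=p_{j}$ gives $\|g\|_{\vpc}=1$; the vanishing sets of the $A_{j}$ are handled correctly. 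This closed form also neatly sidesteps the measurable-selection issue of the naive induction, which you rightly identify.

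The one point where the plan as written cannot be carried out verbatim is a coordinate with $p_{j}=\infty$. There the dual exponent is $1$, and $\|F\|_{\infty}=\sup_{\|h\|_{1}\le 1}\int Fh$ is a supremum that is in general \emph{not attained}, so no choice of ``appropriate $L_{1}$ extremizer'' yields $\int fg=\|f\|_{\pv}$ exactly. You must downgrade the goal to: for every $\varepsilon>0$ produce $g_{\varepsilon}$ with $\|g_{\varepsilon}\|_{\vpc}\le 1$ and $\int fg_{\varepsilon}\ge(1-\varepsilon)\|f\|_{\pv}$ (which is all the lemma needs, as it asserts equality of a supremum, not its attainment). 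Concretely, replace the factor at such an index $j$ by $\chi_{E_{j}}/\P_{j}(E_{j})$ where $E_{j}=E_{j}(x_{j+1},\dots,x_{d})$ is the set on which $A_{j-1}\ge(1-\varepsilon)A_{j}$; note that this set depends measurably on the outer variables (it is a section of a measurable set of positive measure in each fiber), so the measurability concern you dismissed for the finite exponents quietly returns here and should be addressed, e.g.\ by this explicit superlevel-set choice. With that repair the argument is complete.
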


\subsection{Martingale Hardy spaces}

Suppose that the $\sigma$-algebra $\cF_{n}^{i} \subset \cF^{i}$ $(n \in \N, \, i=1,\ldots,d)$, $(\cF_{n}^{i})_{n \in \N}$ is increasing and $\cF^{i} = \sigma\left( \cup_{n \in \N} \cF_{n}^{i} \right)$. Let $\cF_{n} = \sigma\left( \prod_{i=1}^{d} \cF_{n}^{i} \right)$. The expectation and  conditional expectation operators relative to $\mathcal{F}_{n}$ are denoted by $\mathbb{E}$ and $\mathbb{E}_{n}$, respectively. An integrable sequence $f = \left(f_{n}\right)_{n \in \N}$ is said to be a \emph{martingale} if
\begin{enumerate}
\item $(f_{n})_{n \in \N}$ is \emph{adapted}, that is for all $n \in \N$, $f_{n}$ is $\mathcal{F}_{n}$-measurable;
\item $\mathbb{E}_{n} f_{m} = f_{n}$ in case $n \leq m$.
\end{enumerate}
For $n \in \N$, the \emph{martingale difference} is defined by $d_{n}f := f_{n}-f_{n-1}$, where $f= \left(f_{n}\right)_{n \in \N}$ is a martingale and $f_{0}:=f_{-1}:=0$. Thus $d_{0}=0$. If for all $n \in \N$, $f_{n} \in L_{\pv}$, then $f$ is called an $L_{\pv}$-martingale. Moreover, if
$$
\left\|f\right\|_{\pv} := \sup_{n \in \N} \left\|f_{n}\right\|_{\pv} < \infty,
$$
then $f$ is called an $L_{\pv}$-bounded martingale and it will be denoted by $f \in L_{\pv}$. The map $\nu: \Omega \to \N \cup \{\infty\}$ is called a \emph{stopping time relative to $(\cF_{n})$} if for all $n \in \N$, $\{ \nu = n \} \in \cF_{n}$.

For a martingale $f=(f_{n})$ and a stopping time $\nu$, the \emph{stopped martingale} is defined by
$$
f_{n}^{\nu} = \sum_{m=0}^{n} d_{m}f \, \chi_{\{ \nu \geq m \}}.
$$

Let us define the \emph{maximal function}, the \emph{quadratic variation} and the \emph{conditional quadratic variation} of the martingale $f$ relative to $(\Omega, \mathcal{F}, \mathbb{P}, (\mathcal{F}_{n})_{n \in \N})$ by
\begin{eqnarray*}
M_{m} \left(f\right) &:=& \sup_{n \leq m } \left| f_{n} \right|, \quad M \left(f\right) := \sup_{n \in \N} \left| f_{n}\right|\\
S_{m} \left(f\right) &:=& \left( \sum_{n=0}^m \left| d_{n}f\right|^2 \right)^{1/2}, \quad S \left(f\right) := \left( \sum_{n = 0}^{\infty} \left| d_{n}f\right|^2 \right)^{1/2}\\
s_{m} \left(f\right) &:=& \left( \sum_{n=0}^{m} \mathbb{E}_{n-1} \left| d_{n}f\right|^2 \right)^{1/2}, \quad s \left(f\right) := \left( \sum_{n=0}^{\infty} \mathbb{E}_{n-1} \left| d_{n}f\right|^2 \right)^{1/2}.
\end{eqnarray*}
The set of the sequences $(\lambda_{n})_{n \in \N}$ of non-decreasing, non-negative and adapted functions with $\lambda_{\infty} := \lim_{n \to \infty} \lambda_{n}$ is denoted by $\Lambda$. With the help of the previous operators, the mixed martingale Hardy spaces can be defined as follows:
\begin{eqnarray*}
H_{\pv}^{M} &:=& \left\{ f = \left(f_{n}\right)_{n \in \N} : \left\|f\right\|_{H_{\pv}^{M}} := \left\| M(f) \right\|_{\pv} < \infty \right\};\\
H_{\pv}^{S} &:=& \left\{ f = \left(f_{n}\right)_{n \in \N} : \left\|f\right\|_{H_{\pv}^{S}} := \left\| S(f) \right\|_{\pv} < \infty \right\};\\
H_{\pv}^{s} &:=& \left\{ f = \left(f_{n}\right)_{n \in \N} : \left\|f\right\|_{H_{\pv}^{s}} := \left\| s(f) \right\|_{\pv} < \infty \right\};\\
\mathcal{Q}_{\pv} &:=& \left\{ f = \left(f_{n}\right)_{n \in \N} : \exists \left(\lambda_{n}\right)_{n \in \N} \in \Lambda, \, \mbox{such that} \, S_{n} \left(f\right) \leq \lambda_{n-1}, \, \lambda_{\infty} \in L_{\pv} \right\},\\
& & \left\| f \right\|_{\mathcal{Q}_{\pv}} := \inf_{(\lambda_{n}) \in \Lambda} \left\| \lambda_{\infty} \right\|_{\pv};\\
\mathcal{P}_{\pv} &:=& \left\{ f = \left(f_{n}\right)_{n \in \N} : \exists \left(\lambda_{n}\right)_{n \in \N} \in \Lambda, \, \mbox{such that} \, \left| f_{n} \right| \leq \lambda_{n-1}, \, \lambda_{\infty} \in L_{\pv} \right\},\\
& & \left\| f \right\|_{\mathcal{P}_{\pv}} := \inf_{(\lambda_{n}) \in \Lambda} \left\| \lambda_{\infty} \right\|_{\pv}.
\end{eqnarray*}

\section{Doob's inequality}\label{doob's inequality}

In this section, we will prove that the maximal operator $M$ is bounded on the space $L_{\overrightarrow{p}}$ for $1 < \pv < \infty$. Recall that $Mf = \sup_{n} \left| f_{n} \right|$. If $1 < \pv < \infty$ and the martingale $(f_{n})_{n \in \N} \in L_{\overrightarrow{p}}$, then there is a function $g \in L_{\overrightarrow{p}}$ such that for all $n \in \N$, $f_{n} = \mathbb{E}_{n}g$. For $k \in \{1,\ldots,d \}$ and $m \in \N$, let us denote
$$
\cF_{\infty,\ldots,\infty,m,\infty,\ldots,\infty} := \sigma\left( \cF^{1} \times \cdots \times \cF^{k-1} \times \cF_{m}^{k} \times \cF^{k+1} \times \cdots \times \cF^{d} \right),
$$
where $m$ stands in the $k$-th position. The conditional expectation operator relative to $\cF_{\infty,\ldots,\infty,m,\infty,\ldots,\infty}$ is denoted by $\E_{\infty,\ldots,\infty,m,\infty,\ldots,\infty}$. We need the following maximal operators: for an integrable function $f$, let
$$
M_{k}f := \sup_{m \in \N} \left| \mathbb{E}_{\infty,\ldots,\infty,m,\infty,\ldots,\infty} f \right|, \quad \widetilde{M}f := M_{d} \circ M_{d-1} \circ \ldots \circ M_{1} f.
$$
It is clear that
$$
Mf \leq \widetilde{M}f.
$$
For $0 < p < \infty$ and $w > 0$, the weighted space $L_{p}(w)$ consists of all functions $f$, for which
$$
\left\| f \right\|_{L_{p}(w)} := \left( \int_{\Omega} |f|^p \, w d\P \right)^{1/p} < \infty.
$$
We need the following lemma.

\begin{lem}\label{M atdobhato}
Let $\varphi$ be a positive function. Then for all $r > 1$, $M$ is bounded from $L_{r} \left(M \varphi\right)$ to $L_r \left(\varphi\right)$, that is for all $f \in L_{r}\left(M \varphi\right)$ we have
\begin{equation}\label{eq61}
\int_{\Omega} |M f|^r \, \varphi d\P \leq c_{r} \int_{\Omega} \left| f\right|^r \, M \varphi \, d\P.
\end{equation}
\end{lem}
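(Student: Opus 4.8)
The plan is to deduce the strong-type weighted inequality \eqref{eq61} by interpolating between a weighted weak-type $(1,1)$ estimate and the trivial $L_\infty$ bound. Writing $f_n := \E_n f$, so that $Mf = \sup_{\niN} |\E_n f|$, and setting $d\nu := \varphi \, d\P$ and $d\mu := M\varphi \, d\P$, the heart of the matter is the weighted weak-type inequality
$$
\int_{\{Mf > \lambda\}} \varphi \, d\P \leq \frac{1}{\lambda} \int_{\Omega} |f| \, M\varphi \, d\P \qquad (\lambda > 0),
$$
which says that $M$ maps $L_1(\mu) = L_1(M\varphi \, d\P)$ into weak-$L_1(\nu)$. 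Since $|\E_n f| \le \E_n|f| \le \|f\|_\infty$ pointwise, we also have $Mf \le \|f\|_\infty$, so $M$ maps $L_\infty(\mu)$ into $L_\infty(\nu)$ with norm $\le 1$ (as $\varphi > 0$, the two $L_\infty$-norms coincide with the $\P$-essential supremum). Because $M$ is sublinear, the Marcinkiewicz interpolation theorem, applied with the domain measure $\mu$ and the range measure $\nu$, then yields for every $1 < r < \infty$ a constant $c_r$ with
$$
\left( \int_\Omega |Mf|^r \, \varphi \, d\P \right)^{1/r} \le c_r \left( \int_\Omega |f|^r \, M\varphi \, d\P \right)^{1/r},
$$
which is exactly \eqref{eq61}.

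To prove the weak-type estimate I would fix $\lambda > 0$ and introduce the stopping time $\tau := \inf\{ n \in \N : |\E_n f| > \lambda \}$ (with $\inf \emptyset := \infty$). Then $\{\tau = n\} \in \cF_n$ for each $n$, and $\{Mf > \lambda\} = \bigcup_{\niN} \{\tau = n\}$ is a disjoint union. Because $\{\tau = n\} \in \cF_n$ one may replace $\varphi$ by $\E_n \varphi$ on this set, and on $\{\tau = n\}$ one has $\lambda < |\E_n f| \le \E_n |f|$, whence
$$
\int_{\{\tau = n\}} \varphi \, d\P = \int_{\{\tau = n\}} \E_n \varphi \, d\P \le \frac{1}{\lambda} \int_{\{\tau = n\}} \E_n|f| \cdot \E_n\varphi \, d\P.
$$
Now $\chi_{\{\tau = n\}} \E_n \varphi$ is $\cF_n$-measurable, so by the self-adjointness of $\E_n$ the last integral equals $\int_{\{\tau = n\}} |f| \cdot \E_n\varphi \, d\P$; using $\E_n \varphi \le M\varphi$ (valid since $\varphi \ge 0$) and summing over $n$ gives the desired weak-type inequality.

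I expect the main obstacle to be this weak-type step, and specifically the bookkeeping with the conditional expectations: the crucial point is to pass the weight $\varphi$ through $\E_n$ on the $\cF_n$-measurable set $\{\tau = n\}$ and then transfer the conditional expectation from $|f|$ onto the $\cF_n$-measurable factor $\chi_{\{\tau=n\}}\E_n\varphi$, which is precisely what produces $M\varphi$ on the right-hand side. Once this is in place the interpolation is routine; one only has to invoke Marcinkiewicz with the correct (distinct) measures $\mu$ and $\nu$ on the domain and the range. A minor technical point is that $\varphi$ should be taken integrable (or the estimate first read for the truncations $\min(\varphi, N)$ and then passed to the limit) so that $\E_n \varphi$ and $M\varphi$ are finite $\P$-almost everywhere.
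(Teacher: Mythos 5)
Your proposal is correct and follows essentially the same route as the paper: the weighted weak-type $(1,1)$ estimate via the stopping time $\tau=\inf\{n:|\E_n f|>\lambda\}$, the self-adjointness of $\E_n$ on the $\cF_n$-measurable set $\{\tau=n\}$ to move the conditional expectation onto $\varphi$, the bound $\E_n\varphi\le M\varphi$, the trivial $L_\infty$ endpoint, and Marcinkiewicz interpolation with the two different measures $M\varphi\,d\P$ and $\varphi\,d\P$. The only differences are cosmetic (order of the manipulations and the explicit remark about integrability of $\varphi$, which the paper leaves implicit).
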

\begin{proof}
It is easy to see, that the operator $M$ is bounded from $L_{\infty}(M\varphi)$ to $L_{\infty}(\varphi)$. We will prove that $M$ is bounded from $L_{1}(M\varphi)$ to $L_{1,\infty}(\varphi)$ as well, where $L_{1,\infty}(\varphi)$ denotes the weak-$L_{1}(\varphi)$ space. From this, it follows by interpolation (see e.g. \cite{belo}) that for all $r > 1$, the operator $M$ is bounded from $L_{r}(M\varphi)$ to $L_{r}(\varphi)$, in oder words, \eqref{eq61} holds. Let $\varrho > 0$ arbitrary and let $\nu_{\varrho} := \inf \left\{ n \in \N : \left| f_{n} \right| > \varrho \right\}$. Then using that $\left\{ \nu_\varrho < \infty \right\} = \left\{ Mf > \varrho \right\}$, we get that
\begin{eqnarray*}
\left\|Mf\right\|_{L_{1,\infty}(\varphi)} &:=& \varrho \int_{ \left\{ Mf > \varrho \right\}} \varphi \, d\P = \varrho \sum_{k \in \N} \int_{ \left\{ \nu_{\varrho} = k \right\}} \varphi \, d\P \leq \sum_{k \in \N} \int_{\left\{\nu_{\varrho}=k\right\}} |f_{k}| \, \varphi \, d\P\\
&=& \sum_{k \in \N} \int_{\left\{\nu_{\varrho}=k\right\}} \mathbb{E}_{k} |f| \, \varphi \, d\P = \sum_{k \in \N} \int_{\left\{\nu_{\varrho}=k\right\}} |f|  \, \mathbb{E}_{k}\varphi \, d\P \\
&\leq& \sum_{k \in \N} \int_{\left\{\nu_{\varrho}=k\right\}} |f|  \, M \varphi  \, d\P \leq \int_{\Omega} |f| \, M\varphi \, d\P = \left\|f\right\|_{L_{1}(M\varphi)},
\end{eqnarray*}
which finishes the proof.
\end{proof}
Now we prove that $M_{d}$ is bounded on $L_{\pv}$.

\begin{thm}\label{tbagby}
    For all $1 < \pv < \infty$, $M_{d}$ is bounded on $L_{\pv}$, that is for all $f \in L_{\pv}$,
    $$
    \|M_{d}f\|_{\pv} \leq c \|f\|_{\pv}.
    $$
\end{thm}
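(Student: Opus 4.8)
The plan is to exploit the fact that $M_{d}$ acts only on the last coordinate. Since $\cF_{\infty,\ldots,\infty,m,\infty,\ldots,\infty}$ is the product of the full $\sigma$-algebras $\cF^{1},\ldots,\cF^{d-1}$ with $\cF_{m}^{d}$, the operator $\E_{\infty,\ldots,\infty,m,\infty,\ldots,\infty}$ acts as the identity in the variables $x_{1},\ldots,x_{d-1}$ and as the one-parameter conditional expectation in $x_{d}$. Hence $M_{d}$ is exactly the martingale maximal operator attached to the single increasing filtration $(\cF_{\infty,\ldots,\infty,m,\infty,\ldots,\infty})_{m\in\N}$, and the proof of Lemma \ref{M atdobhato}, which uses only the structure of one filtration, applies to it verbatim: for every positive weight $\varphi$ on $\Omega$ and every $r>1$,
\[
\int_{\Omega}(M_{d}f)^{r}\,\varphi\,d\P\le c_{r}\int_{\Omega}|f|^{r}\,(M_{d}\varphi)\,d\P .
\]
Taking $\varphi\equiv1$ and using $M_{d}1=1$ gives, for each fixed value of $x_{1},\ldots,x_{d-1}$, the one-dimensional Doob inequality $\|M_{d}f\|_{L_{p}(dx_{d})}\le c_{p}\|f\|_{L_{p}(dx_{d})}$ for all $p>1$.

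The mechanism is cleanest in the model case $d=2$ with $p_{2}\ge p_{1}$. Raising to the power $p_{1}$ and using the notation $\pv/p_{1}$, write $\|M_{2}f\|_{\pv}^{p_{1}}=\|(M_{2}f)^{p_{1}}\|_{\pv/p_{1}}$, where $\pv/p_{1}=(1,p_{2}/p_{1})$ has both coordinates $\ge1$. The duality of Lemma \ref{norma-sup} then reduces the left-hand side to estimating $\int_{\Omega}(M_{2}f)^{p_{1}}\Psi\,d\P$ over nonnegative $\Psi$ with $\|\Psi\|_{(\infty,(p_{2}/p_{1})')}\le1$. Now I would apply the weighted inequality above with $r=p_{1}$ to move the maximal operator off $f$, and then Hölder's inequality (Lemma \ref{holder}):
\[
\int_{\Omega}(M_{2}f)^{p_{1}}\Psi\,d\P\le c_{p_{1}}\int_{\Omega}|f|^{p_{1}}(M_{2}\Psi)\,d\P\le c_{p_{1}}\|f\|_{\pv}^{p_{1}}\,\|M_{2}\Psi\|_{(\infty,(p_{2}/p_{1})')} .
\]
Because the inner exponent of the dual norm is $\infty$, the lattice maximal function in $x_{2}$ is dominated for free: since $\Psi(x_{1},y_{2})\le\|\Psi(\cdot,y_{2})\|_{L_{\infty}(dx_{1})}$ for a.e.\ $x_{1}$, one gets $M_{2}\Psi(x_{1},x_{2})\le M_{2}[\,\|\Psi(\cdot,\cdot)\|_{L_{\infty}(dx_{1})}\,](x_{2})$ uniformly in $x_{1}$, so the one-dimensional Doob inequality on $L_{(p_{2}/p_{1})'}(dx_{2})$ yields $\|M_{2}\Psi\|_{(\infty,(p_{2}/p_{1})')}\le c\|\Psi\|_{(\infty,(p_{2}/p_{1})')}\le c$. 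Combining the two displays gives $\|M_{2}f\|_{\pv}\le c\|f\|_{\pv}$.

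The main obstacle is the exponent bookkeeping needed to make this work for all $1<\pv<\infty$. The computation above closes only when the outer exponent $p_{d}$ is large relative to the inner ones, for only then does raising to an inner power leave the remaining outer exponent $\ge1$ (so that Lemma \ref{norma-sup} applies) while pushing the genuinely hard inner direction onto the trivial $L_{\infty}$ side, where the lattice maximal function is controlled at no cost. Viewing $f(\cdot,x_{d})$ as taking values in the Banach lattice $L_{(p_{1},\ldots,p_{d-1})}$, the statement is a lattice-valued (Fefferman--Stein type) martingale maximal inequality, and in the complementary range of exponents the naive reduction fails: there one must pass to the conjugate formulation, in which the exponents are reordered into the favourable position, and transfer the bound back by duality. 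For $d>2$ these one-variable arguments must be combined with an induction on the number of frozen inner variables, and organising this case analysis cleanly, rather than any single estimate, is where the real difficulty lies.
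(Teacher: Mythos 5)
You have the right key lemma (the weighted Doob inequality of Lemma \ref{M atdobhato}) and the right mechanism (dualize, move $M_{d}$ onto the dual weight, control the weight), and your model computation for $d=2$ with $p_{2}\ge p_{1}$ is correct. But the proof has a genuine gap exactly where you flag it: in the range $p_{2}<p_{1}$ the quotient exponent $p_{2}/p_{1}$ drops below $1$, so Lemma \ref{norma-sup} no longer applies to $\pv/p_{1}$, and your proposed repair --- ``pass to the conjugate formulation, in which the exponents are reordered into the favourable position'' --- does not work as stated: mixed norms are order-sensitive (duality produces the conjugate exponents \emph{in the same order}, it does not permute them), and $M_{d}$ is a sublinear maximal operator, not a linear self-adjoint one, so a bound on $L_{(p_{1}',p_{2}')}$ cannot simply be transferred back to $L_{(p_{1},p_{2})}$ by duality. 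Likewise the induction for $d>2$ is only gestured at. So roughly half of the exponent range, and the whole multivariable organisation, remain unproved.

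The paper closes both gaps with two ideas you are missing. First, instead of raising to the \emph{innermost} power $p_{1}$, it raises to a power $r$ chosen with $1<r<\min\{p_{2},\ldots,p_{d+1}\}$; then every quotient $p_{i}/r$ exceeds $1$ regardless of how the exponents are ordered, and after the weighted inequality and H\"older one must bound $\left\|M_{d+1}\varphi\right\|_{\qvc}$ with $\qvc=\left((p_{2}/r)',\ldots,(p_{d+1}/r)'\right)$, all coordinates in $(1,\infty)$ --- this is controlled not by your trivial $L_{\infty}$ domination but by the \emph{induction hypothesis in $d$ variables applied to the dual weight} $\varphi$. Second, this only gives the theorem for the single small value $p_{1}=r$ in the first slot; the paper separately proves the endpoint $p_{1}=\infty$ (using $T_{\infty}(M_{d+1}f)\le M_{d+1}(T_{\infty}f)$ and induction again) and then \emph{interpolates in the first coordinate} between $(r,p_{2},\ldots,p_{d+1})$ and $(\infty,p_{2},\ldots,p_{d+1})$ to reach all $1<p_{1}<\infty$. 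It is this interpolation step, not a reordering of exponents, that handles the ``unfavourable'' range; without it (or some substitute) your argument cannot be completed.
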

\begin{proof}
We will prove the theorem by induction in $d$. If $d=1$, then the function $f$ has only $1$ variable and the theorem holds (see, e.g., Weisz \cite{wk}). Suppose that the theorem is true for some fixed $d \in \N$ and for all $1 < \pv = (p_{1},\ldots,p_{d}) < \infty$ and $f \in L_{\pv}$. For a function $f$ with $d$ variables and for the vector $(p_{1},\ldots,p_{k})$, let us denote
\begin{eqnarray*}
T_{(p_{1},\ldots,p_{k})}f(x_{k+1},\ldots,x_{d}) &:=& \|f(\cdot,\ldots,\cdot,x_{k+1},\ldots,x_{d})\|_{(p_{1},\ldots,p_{k})}\\
&=& \left( \int_{\Omega_{k}} \cdots  \left( \int_{\Omega_{1}} |f(x_{1},\ldots,x_{d})|^{p_{1}} \, dx_{1} \right)^{p_{2}/p_{1}} \cdots  \, dx_{k} \right)^{1/p_{k}}.
\end{eqnarray*}
Using this notation, the condition of the induction can be written in the form
$$
\int_{\Omega_{d}} \left[T_{(p_{1},\ldots,p_{d-1})}(M_{d}f) \right]^{p_{d}}(x_{d}) \, dx_{d} \leq c \int_{\Omega_{d}} \left[T_{(p_{1},\ldots,p_{d-1})}(f) \right]^{p_{d}}(x_{d}) \, dx_{d}.
$$
We will show that if $1 < p_{d+1} < \infty$, then for all $f \in L_{(p_{1},\ldots, p_{d+1})}$,
$$
\int_{\Omega_{d+1}} \left[T_{(p_{1},\ldots,p_{d})}(M_{d+1}f) \right]^{p_{d+1}}(x_{d+1}) \, dx_{d+1} \leq c \int_{\Omega_{d+1}} \left[T_{(p_{1},\ldots,p_{d})}(f) \right]^{p_{d+1}}(x_{d+1}) \, dx_{d+1},
$$
where $f$ has $d+1$ variable and the maximal operator $M_{d+1}$ is taken in the variable $x_{d+1}$. If $p_{1} = \infty$, then
$$
|f(x_{1},\ldots,x_{d},x_{d+1})| \leq \sup_{x_{1} \in \Omega_{1}}|f(x_{1},\ldots,x_{d},x_{d+1})| = T_{\infty}f(x_{2},\ldots,x_{d},x_{d+1}).
$$
Hence $M_{d+1}f(x_{1},\ldots,x_{d},x_{d+1}) \leq M_{d+1}(T_{\infty}f)(x_{2},\ldots,x_{d},x_{d+1}),
$ and therefore
\begin{eqnarray*}
T_{\infty}(M_{d+1}f)(x_{2},\ldots,x_{d},x_{d+1}) &=& \sup_{x_{1} \in \Omega_{1}} M_{d+1}f(x_{1},\ldots,x_{d},x_{d+1})\\
&\leq& M_{d+1}(T_{\infty}f)(x_{2},\ldots,x_{d},x_{d+1}).
\end{eqnarray*}
So we get that
\begin{eqnarray}\label{indukcio elott}
\lefteqn{\int_{\Omega_{d+1}} \left[T_{(p_{2},\ldots,p_{d})}(T_{\infty}(M_{d+1}f)) \right]^{p_{d+1}}(x_{d+1}) \, dx_{d+1}} \n\\
&\leq& \int_{\Omega_{d+1}} \left[T_{(p_{2},\ldots,p_{d})}(M_{d+1}(T_{\infty}f)) \right]^{p_{d+1}}(x_{d+1}) \, dx_{d+1}.
\end{eqnarray}
Here the function $M_{d+1}(T_{\infty}f)$ has $d$ variables: $x_{2},\ldots,x_{d+1}$ and the maximal operator is taken over the $d$-th variable, that is over $x_{d+1}$. Therefore by induction we have that \eqref{indukcio elott} can be estimated by
$$
c \int_{\Omega_{d+1}} \left[T_{(p_{2},\ldots,p_{d})}(T_{\infty}f) \right]^{p_{d+1}}(x_{d+1}) \, dx_{d+1},
$$
which means that
\begin{equation}\label{infinityre ok}
\|M_{d+1}f\|_{(\infty,p_{2},\ldots,p_{d},p_{d+1})} \leq c \|f\|_{(\infty,p_{2},\ldots,p_{d},p_{d+1})},
\end{equation}
so the theorem holds for $p_{1}=\infty$. Now let choose a number $r$ for which $1 < r < \min\{ p_{2}, \ldots, p_{d}, p_{d+1} \}$. It will be shown that
$$
\|M_{d+1}f\|_{(r,p_{2},\ldots,p_{d},p_{d+1})} \leq c \|f\|_{(r,p_{2},\ldots,p_{d},p_{d+1})}.
$$
It is easy to see that
$$
\|M_{d+1}f\|_{(r,p_{2},\ldots,p_{d+1})} = \left\|[T_{r}(M_{d+1}f)]^r \right\|_{\left(\frac{p_{2}}{r},\ldots,\frac{p_{d+1}}{r} \right)}^{1/r}.
$$
Let $\qv := \left(\frac{p_{2}}{r},\ldots,\frac{p_{d+1}}{r} \right)$. Then the vector $\qv$ has $d$ coordinates and $1 < \qv < \infty$. Using Lemma \ref{norma-sup},
\begin{eqnarray*}
\lefteqn{\left\|[T_{r}(M_{d+1}f)]^r \right\|_{\qv}}\\
&=& \sup_{\substack{\varphi \in \Lqvc \\ \|\varphi\|_{\qvc} \leq 1}} \left| \int_{\Omega_{d+1}} \cdots \int_{\Omega_{2}} \left[T_{r}(M_{d+1}f) \right]^r(x_{2},\ldots,x_{d+1}) \, \varphi(x_{2},\ldots,x_{d+1})  \, dx_{2}\cdots dx_{d+1} \right|.
\end{eqnarray*}
We can suppose that $\varphi > 0$. Then
\begin{eqnarray}\label{atdobas elott}
\lefteqn{\int_{\Omega_{d+1}} \cdots \int_{\Omega_{2}} \left[T_{r}(M_{d+1}f) \right]^r(x_{2},\ldots,x_{d+1}) \, \varphi(x_{2},\ldots,x_{d+1})  \, dx_{2}\cdots dx_{d+1} }\\
&=& \int_{\Omega_{1}} \left( \int_{\Omega_{d}} \cdots \int_{\Omega_{2}} \left( \int_{\Omega_{d+1}} |M_{d+1} f|^r(x_{1},\ldots,x_{d+1}) \, \varphi(x_{2},\ldots,x_{d+1}) \, dx_{d+1} \right) dx_{2} \cdots dx_{d} \right) dx_{1} \n.
\end{eqnarray}
Since $1 < r < \infty$, applying Lemma \ref{M atdobhato} for the variable $x_{d+1}$, we have that for all fixed $x_{1},\ldots,x_{d}$,
\begin{eqnarray*}
\lefteqn{\int_{\Omega_{d+1}} |M_{d+1} f|^r(x_{1},\ldots,x_{d+1}) \, \varphi(x_{2},\ldots,x_{d+1}) \, dx_{d+1}  }\\
&\leq& c \int_{\Omega_{d+1}} |f(x_{1},\ldots,x_{d+1})|^r \, M_{d+1}\varphi(x_{2},\ldots,x_{d+1}) \, dx_{d+1}
\end{eqnarray*}
Hence \eqref{atdobas elott} can be estimated by
\begin{eqnarray}\label{eq3}
\lefteqn{c \int_{\Omega_{d+1}} \cdots \int_{\Omega_{2}} \left( \int_{\Omega_{1}} |f(x_{1},\ldots,x_{d+1})|^r \, dx_{1} \right) \, M_{d+1}\varphi(x_{2},\ldots,x_{d+1}) \, dx_{2} \cdots dx_{d+1}} \n\\
&=& c \int_{\Omega_{d+1}} \cdots \int_{\Omega_{2}} [T_{r}(f)]^r(x_{2},\ldots,x_{d+1}) \, M_{d+1}\varphi(x_{2},\ldots,x_{d+1}) \, dx_{2} \cdots dx_{d+1} \n\\
&\leq& c \left\| \left[T_{r}(f) \right]^r \right\|_{\qv} \, \left\| M_{d+1} \varphi \right\|_{\qvc}.
\end{eqnarray}
Here $M_{d+1}\varphi$ is a function with $d$ variables, the vector $\qvc$ has $d$ coordinates such that $1 < \qvc < \infty$ and the maximal operator is taken in the $d$-th coordinate, that is over the variable $x_{d+1}$. So, by induction we get that
$$
\left\| M_{d+1} \varphi \right\|_{\qvc} \leq c \left\| \varphi \right\|_{\qvc} \leq c.
$$
Therefore we can estimate \eqref{eq3} by
\begin{eqnarray*}
\lefteqn{ c \left\| \left[T_{r}(f) \right]^r \right\|_{\qv} } \\
&=&  c \left( \int_{\Omega_{d+1}} \cdots \left( \int_{\Omega_{2}} \left( \int_{\Omega_{1}} |f(x_{1},x_{2},\ldots,x_{d+1})|^r \, dx_{1} \right)^{p_{2}/r} \, dx_{2} \right)^{p_{3}/r \cdot r/p_{2}} \cdots dx_{d+1} \right)^{r/p_{d+1}} \\
&=& \|f\|_{(r,p_{2},\cdots,p_{d+1})}^{r}.
\end{eqnarray*}
Consequently,
\begin{equation}\label{r-re is ok}
\|M_{d+1}f\|_{(r,p_{2},\cdots,p_{d+1})} \leq c \|f\|_{(r,p_{1},\cdots,p_{d+1})}.
\end{equation}
Combining the results \eqref{infinityre ok} and \eqref{r-re is ok} we get by interpolation that for all $1 < p_{1} < \infty$
$$
\|M_{d+1}f\|_{(p_{1},p_{2},\cdots,p_{d+1})} \leq c \|f\|_{(p_{1},p_{1},\cdots,p_{d+1})}.
$$
Using induction, the proof is complete.
\end{proof}

\begin{remark}\label{remark1}
Using the proof of the previous theorem, this result can be generalized for $\pv$-s, for which its first $k$ coordinates are $\infty$, but the others are strongly between $1$ and $\infty$, that is, for $\pv$-s, with
\begin{equation}\label{eq52}
\pv = (\infty,\infty,\ldots,\infty,p_{k+1},\ldots,p_{d}), \quad 1 < p_{k+1}, \ldots, p_{d} < \infty
\end{equation}
for some $k \in \{ 1, \ldots, d \}$.
\end{remark}

Now, we can generalize the well-known Doob's inequality. Using the previous theorem, we get that the maximal operator $M$ is bounded on $L_{\overrightarrow{p}}$ in case $1 < \pv < \infty$.
\begin{thm}\label{M korlatossaga}
If $1 < \pv < \infty$ or $\pv$ satisfies \eqref{eq52}, then the maximal operator $M$ is bounded on $L_{\overrightarrow{p}}$, that is for all $f \in L_{\overrightarrow{p}}$,
$$
\left\| Mf \right\|_{\overrightarrow{p}} \leq c \left\| f \right\|_{\overrightarrow{p}}.
$$
\end{thm}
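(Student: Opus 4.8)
The plan is to reduce everything to the single-variable result already secured in Theorem \ref{tbagby}, exploiting the pointwise domination $Mf \le \widetilde{M}f = M_d \circ M_{d-1} \circ \cdots \circ M_1 f$ recorded above. Since $\|Mf\|_{\pv} \le \|\widetilde{M}f\|_{\pv}$, it suffices to bound $\widetilde{M}$, and for that it is enough to show that each component operator $M_k$ ($k=1,\ldots,d$) is bounded on $L_{\pv}$; composing the $d$ estimates then gives
$$
\|\widetilde{M}f\|_{\pv} = \|M_d(M_{d-1}(\cdots M_1 f))\|_{\pv} \le c\,\|M_{d-1}(\cdots M_1 f)\|_{\pv} \le \cdots \le c\,\|f\|_{\pv}.
$$

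First I would record the structural fact that the mixed norm factors through any initial block of variables: with the notation $T_{(p_1,\ldots,p_k)}g(x_{k+1},\ldots,x_d) := \|g(\cdot,\ldots,\cdot,x_{k+1},\ldots,x_d)\|_{(p_1,\ldots,p_k)}$ from the proof of Theorem \ref{tbagby}, one has $\|g\|_{\pv} = \|T_{(p_1,\ldots,p_k)}g\|_{(p_{k+1},\ldots,p_d)}$ by associativity of the iterated norm. The point is that $M_k$ acts only in the variable $x_k$, which is the \emph{outermost} variable of the inner block $(p_1,\ldots,p_k)$, and that, with the remaining variables frozen, $M_k$ is precisely the one-dimensional maximal operator in $x_k$ (the conditional expectation $\E_{\infty,\ldots,\infty,m,\infty,\ldots,\infty}$ averages only in the $k$-th coordinate). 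Hence Theorem \ref{tbagby}, applied in dimension $k$ with exponent vector $(p_1,\ldots,p_k)$, yields for every fixed $(x_{k+1},\ldots,x_d)$ the pointwise inequality
$$
T_{(p_1,\ldots,p_k)}(M_k g)(x_{k+1},\ldots,x_d) \le c\, T_{(p_1,\ldots,p_k)}g(x_{k+1},\ldots,x_d).
$$

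Next I would take the outer $(p_{k+1},\ldots,p_d)$-norm of both nonnegative sides and invoke the monotonicity of the mixed norm (Benedek--Panzone) to obtain $\|M_k g\|_{\pv} \le c\,\|g\|_{\pv}$, which is the boundedness of each $M_k$ and hence of $\widetilde{M}$. In the case \eqref{eq52} the inner block $(p_1,\ldots,p_k)$ either consists entirely of $\infty$'s, in which case $M_k$ is a trivial $L_\infty$-contraction in $x_k$ and no appeal to Theorem \ref{tbagby} is needed, or it has leading $\infty$'s followed by exponents strictly between $1$ and $\infty$, which is exactly the configuration covered by Remark \ref{remark1}; in both situations the same pointwise estimate for $T_{(p_1,\ldots,p_k)}$ remains valid.

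I do not expect a genuine obstacle, since the substantive analytic content (the weighted one-step bound of Lemma \ref{M atdobhato} and the induction of Theorem \ref{tbagby}) is already available. The only points demanding care are the correct matching of each $M_k$ to Theorem \ref{tbagby} in the appropriate sub-dimension, in particular the verification that $M_k$ with the other variables held fixed really is the one-variable maximal operator in $x_k$, and the routine separate treatment of the infinite coordinates under hypothesis \eqref{eq52}.
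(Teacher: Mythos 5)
Your proposal is correct and follows essentially the same route as the paper: dominate $Mf$ by $\widetilde{M}f = M_d\circ\cdots\circ M_1 f$ and peel off the operators $M_k$ one at a time using Theorem \ref{tbagby} (and Remark \ref{remark1} for the exponents with leading $\infty$'s). The paper leaves implicit the reduction you spell out — that each $M_k$ acts in the outermost variable of the inner block $(p_1,\ldots,p_k)$, so its boundedness on $L_{\pv}$ follows pointwise in the frozen variables from the $k$-dimensional case — and making that explicit is a useful clarification rather than a deviation.
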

\begin{proof}
It is clear that $Mf \leq \widetilde{M}f = M_{d} \circ \cdots \circ M_{1} f$, therefore by Theorem \ref{tbagby} and Remark \ref{remark1},
\begin{eqnarray*}
\left\|Mf\right\|_{\overrightarrow{p}} &\leq& \left\| M_{d} \circ M_{d-1} \circ \ldots \circ M_{1}f \right\|_{\overrightarrow{p}} \leq c \left\| M_{d-1} \circ \ldots \circ M_{1}f \right\|_{\overrightarrow{p}}\\
&\leq& c \left\| M_{d-2} \circ \ldots \circ M_{1}f \right\|_{\overrightarrow{p}} \leq \cdots \leq c \, \|f\|_{\pv}
\end{eqnarray*}
and the proof is complete.
\end{proof}
The following corollary is well-known for classical Hardy spaces with $\pv = (p,\ldots, p)$.
\begin{cor}
If $1 < \pv < \infty$, or $\pv$ satisfies \eqref{eq52}, then $H_{\pv}^{M}$ is equivalent to $L_{\pv}$.
\end{cor}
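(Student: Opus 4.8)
The plan is to establish the two-sided norm equivalence $\|f\|_{\pv} \le \|f\|_{\HpvM} \le c\,\|f\|_{\pv}$, from which the identity of the two martingale spaces (with equivalent norms) follows at once. Recall that $\|f\|_{\HpvM} = \|M(f)\|_{\pv}$ with $M(f) = \sup_{n} |f_n|$, while as an $\Lpv$-bounded martingale one has $\|f\|_{\pv} = \sup_n \|f_n\|_{\pv}$.

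For the easy inequality I would use the pointwise bound $|f_n| \le M(f)$, valid for every $\niN$. Since the iterated mixed norm is monotone (if $0 \le h_1 \le h_2$ pointwise, then $\|h_1\|_{\pv} \le \|h_2\|_{\pv}$, which is immediate from the definition of $\|\cdot\|_{\pv}$ as successive $L_{p_i}$-norms of nonnegative integrands), we get $\|f_n\|_{\pv} \le \|M(f)\|_{\pv}$ for all $n$, and taking the supremum over $n$ yields $\|f\|_{\pv} \le \|f\|_{\HpvM}$. In particular, every $f \in \HpvM$ is an $\Lpv$-bounded martingale.

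The substantial inequality is $\|M(f)\|_{\pv} \le c\,\|f\|_{\pv}$, which is exactly Doob's inequality in the form of Theorem \ref{M korlatossaga}. For $1 < \pv < \infty$ I would invoke the closability recalled earlier: there is $g \in \Lpv$ with $f_n = \E_n g$ and $\|g\|_{\pv} = \|f\|_{\pv}$, so that $M(f) = \sup_n |\E_n g|$ and Theorem \ref{M korlatossaga} applied to $g$ gives $\|M(f)\|_{\pv} \le c\,\|g\|_{\pv} = c\,\|f\|_{\pv}$. To cover the case \eqref{eq52}, where some coordinates equal $\infty$ and closability is not available in the same form, I would instead argue by truncation: for fixed $m$ the finite martingale $(f_0,\ldots,f_m)$ is closed by $f_m$, so $M_m(f) = \sup_{n \le m} |f_n| \le \sup_n |\E_n f_m|$, and Theorem \ref{M korlatossaga} applied to $f_m$ gives $\|M_m(f)\|_{\pv} \le c\,\|f_m\|_{\pv} \le c\,\|f\|_{\pv}$ with $c$ independent of $m$. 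Letting $m \to \infty$ and using $M_m(f) \uparrow M(f)$ together with the monotone-convergence property of the mixed norm then gives $\|M(f)\|_{\pv} \le c\,\|f\|_{\pv}$.

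The main point to be careful about is precisely this last step for exponent vectors with $\infty$-coordinates: one must check that the mixed norm commutes with the monotone limit $M_m(f) \uparrow M(f)$. This reduces to the classical monotone convergence theorem in the finite coordinates and to the elementary fact $\mathrm{ess\,sup}\,(\sup_m h_m) = \sup_m \mathrm{ess\,sup}\,h_m$ for an increasing nonnegative sequence in the $\infty$-coordinates; since the constant $c$ furnished by Theorem \ref{M korlatossaga} does not depend on $m$, the passage to the limit is harmless. Combining the two inequalities then yields the asserted equivalence $\HpvM \sim \Lpv$.
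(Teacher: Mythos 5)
Your proof is correct and takes essentially the same route the paper intends: the corollary is stated as an immediate consequence of Theorem \ref{M korlatossaga} (Doob's inequality), combined with the trivial pointwise bound $|f_n| \leq M(f)$ for the reverse direction. Your additional care in the case \eqref{eq52} — replacing closability by truncation and passing to the limit via monotone convergence in the mixed norm — is a detail the paper leaves implicit, and it is handled correctly.
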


\begin{thm}
Theorem \ref{M korlatossaga} is not true for all $1 < \pv \leq \infty$.
\end{thm}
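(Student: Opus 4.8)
The plan is to exhibit a single exponent vector $\pv$ lying in the range $1 < \pv \le \infty$ but \emph{not} covered by Theorem \ref{M korlatossaga} (so neither $1<\pv<\infty$ nor the configuration \eqref{eq52}), together with an explicit martingale for which the maximal inequality fails. The simplest uncovered case is $d=2$ and $\pv = (p,\infty)$ with $1 < p < \infty$: here the coordinate $\infty$ sits \emph{after} a finite coordinate, so $\pv$ is neither strictly finite nor of the form $(\infty,\dots,\infty,p_{k+1},\dots,p_d)$ with finite trailing entries. For this $\pv$ I will produce $g$ with $\|g\|_{\pv}<\infty$ but $\|Mg\|_{\pv}=\infty$, where $Mg = \sup_n|\E_n g|$ and $f_n=\E_n g$; since $g\in L_{\pv}\subset L_1$ (Hölder in $x_1$, then sup in $x_2$, using $\|g\|_1\le\|g\|_{\pv}$), the martingale $(f_n)$ is well defined and $Mf=Mg$.

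For the construction I take $\Omega_1=\Omega_2=[0,1)$ with Lebesgue measure and dyadic filtrations, so $\E_m$ averages over dyadic squares of side $2^{-m}$. On the diagonal I place the pairwise disjoint dyadic squares $A_n := [2^{-n-1},2^{-n})\times[2^{-n-1},2^{-n})$ ($n\in\N$), of side $2^{-n-1}$, shrinking to the origin, and set $g := \sum_{n\in\N} c_n \chi_{A_n}$ with $c_n := 2^{(n+1)/p}$. The $x_2$-projections $[2^{-n-1},2^{-n})$ tile $(0,1)$, so for a.e.\ $x_2$ the slice $g(\cdot,x_2)$ equals $c_n\chi_{[2^{-n-1},2^{-n})}$ for exactly one $n$, whence $\|g(\cdot,x_2)\|_{L_p(dx_1)} = c_n\,2^{-(n+1)/p}=1$; taking the supremum over $x_2$ gives $\|g\|_{\pv}=1$.

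The effect of this choice of weights is that the \emph{coarse} square averages near the origin blow up. For $m\in\N$ the average of $g$ over $[0,2^{-m})^2$ equals $V_m := 2^{2m-2}\sum_{n\ge m} c_n 2^{-2n} = 2^{2m-2+1/p}\sum_{n\ge m}2^{n(1/p-2)}$; since $1/p-2<0$ the geometric series is comparable to its leading term $2^{m(1/p-2)}$, so $V_m \sim c_p\,2^{m/p}$. Now fix $K$ and let $x_2\in[0,2^{-K})$. For $m<K$ and $x_1\in[2^{-m-1},2^{-m})$ the point $(x_1,x_2)$ lies in $[0,2^{-m})^2$, hence $Mg(x_1,x_2)\ge \E_m g(x_1,x_2)=V_m$. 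As the intervals $[2^{-m-1},2^{-m})$ are disjoint, integrating in $x_1$ gives
$$\int_0^1 |Mg(x_1,x_2)|^p\,dx_1 \ge \sum_{m=0}^{K-1} V_m^p\,2^{-m-1} \sim \sum_{m=0}^{K-1} c_p^p\,2^{m}\,2^{-m-1} \sim c\,K,$$
so $\|Mg(\cdot,x_2)\|_{L_p(dx_1)}\gtrsim K^{1/p}$ on the positive-measure set $[0,2^{-K})$. Since $K$ is arbitrary, the essential supremum over $x_2$ is infinite, i.e.\ $\|Mg\|_{\pv}=\infty$ while $\|g\|_{\pv}=1$, which is exactly the failure of the maximal inequality for $\pv=(p,\infty)$.

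The main obstacle is the quantitative balancing: one must choose the weights $c_n$ so that every $x_2$-slice of $g$ has the same finite $L_p(x_1)$-norm (keeping $\|g\|_{\pv}$ bounded) while the square-averages $V_m$ still grow like $2^{m/p}$, fast enough that $\sum_m V_m^p\,2^{-m}$ diverges in the number of scales. The conceptual point behind the verification is that this divergence is driven by coarse averaging in \emph{both} variables at once—the square $[0,2^{-m})^2$ imports mass from all the thin diagonal squares $A_n$, $n\ge m$—which is precisely the coupling that the iterated estimate of Theorem \ref{tbagby} cannot exploit once an $L_\infty$-coordinate follows a finite one. Confirming that $\E_m g = V_m$ on the corner square and that the geometric series is controlled by its leading term are the two computations to carry out in full.
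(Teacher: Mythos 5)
Your proposal is correct and follows essentially the same route as the paper: both take $d=2$, $\pv=(p,\infty)$, and build a function out of weighted indicators $2^{k/p}\chi$ of diagonal dyadic squares shrinking to the origin, so that every $x_2$-slice has unit $L_p(dx_1)$-norm while the coarse square averages near the origin grow like $2^{m/p}$ and force $\int|Mg|^p\,dx_1\gtrsim K$ on $[0,2^{-K})$. The only cosmetic difference is that you work with the single infinite sum $g$ (one function with $\|g\|_{\pv}=1$ and $\|Mg\|_{\pv}=\infty$) whereas the paper uses the truncated partial sums $f_n$ with $\|f_n\|_{\pv}=1$ and $\|Mf_n\|_{\pv}\to\infty$.
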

\begin{proof}
We prove the theorem for two dimensions and for the exponent $\pv := (p,\infty)$, where $1 < p < \infty$. The proof is similar for higher dimensions. Let us define the following sequence of functions
$$
f_{n}(x,y) := \sum_{k=1}^{n} 2^{k/p} \chi_{[2^{-k},2^{-k+1})^2}(x,y) \quad ( (x,y) \in [0,1) \times [0,1) ).
$$
Then for an arbitrary fixed $y \in [2^{-k},2^{-k+1})$ $(k=1,\ldots,n)$,
$$
\int_{[0,1)} |f_{n}(x,y)|^p \, dx = 2^{k} \, \frac{1}{2^{k}} = 1
$$
and for all fixed $y \notin [2^{-n},1)$, the previous integral is $0$. From this follows that for all $n \in \N$,
$$
\|f_{n}\|_{(p,\infty)} = \sup_{y \in [0,1)} \left( \int_{[0,1)} |f(x,y)|^p \, dx \right)^{1/p} = 1.
$$
At the same time, for $x \in [2^{-k},2^{-k+1})$ $(k=1,\ldots,n)$ and $y \in [0,2^{-n})$,
$$
Mf_{n}(x,y) \geq \frac{1}{\left|[0,2^{-k+1})^2 \right|} \int_{[0,2^{-k+1})^2} |f_{n}(u,v)| \, dudv \geq \frac{1}{2^{-2k+2}} \, 2^{k/p} \, 2^{-2k} = \frac{2^{k/p}}{4}.
$$
Hence we get that for all $y \in [0,2^{-n})$,
\begin{eqnarray*}
\int_{[0,1)} |Mf_{n}(x,y)|^p \, dx &\geq& \sum_{k=1}^{n} \int_{[2^{-k},2^{-k+1})} |Mf_{n}(x,y)|^p \, dx \\
&\geq& \sum_{k=1}^{n} \frac{2^{k}}{4} \, 2^{-k} = \frac{n}{4} \to \infty \quad \left(n \to \infty \right)
\end{eqnarray*}
and therefore
$$
\|Mf_{n}\|_{(p,\infty)} \to \infty \quad \left(n \to \infty \right),
$$
which means, that $M$ is not bounded on $L_{(p,\infty)}$.
\end{proof}

\begin{remark}
This counterexample proves also that $M_{2}$ is not bounded on $L_{(p,\infty)}$. Moreover, the counterexample shows also that the classical Hardy-Littlewood maximal operator considered in Huang et al. \cite{Huang2019} is not bounded on $L_{(p, \infty)}$ (cf. Lemma 3.5 in \cite{Huang2019} and Lemma 4.8 in \cite{Nogayama2019}).
\end{remark}

\section{Atomic decomposition}\label{atomic decomp}

First of all we need the definition of the \emph{atoms}. For $\overrightarrow{p}$, a measurable function $a$ is called an \emph{(s,$\overrightarrow{p}$)-atom} (or \emph{(S,$\overrightarrow{p}$)-atom} or \emph{(M,$\overrightarrow{p}$)-atom}) if there exists a stopping time $\tau$ such that
\begin{enumerate}
\item $\mathbb{E}_n a = 0$ for all $n \leq \tau$,
\item $\left\| s(a) \chi_{ \left\{ \tau < \infty \right\} }\right\|_{\infty} \leq  \left\| \chi_{ \left\{\tau < \infty\right\}} \right\|_{\overrightarrow{p}}^{-1}$\\
    (or $\left\| S(a) \chi_{ \left\{ \tau < \infty \right\} }\right\|_{\infty} \leq \left\| \chi_{ \left\{\tau < \infty\right\}} \right\|_{\overrightarrow{p}}^{-1}$, or $\left\| M(a) \chi_{ \left\{ \tau < \infty \right\} }\right\|_{\infty} \leq \left\| \chi_{ \left\{\tau < \infty\right\}} \right\|_{\overrightarrow{p}}^{-1}$, respectively).
\end{enumerate}

Now we can give the atomic decomposition of the space $H_{\pv}^{s}$.

\begin{thm}\label{atomic decomp of s}
Let $0 < \pv < \infty$. A martingale $f = \left(f_{n}\right)_{n \in \N} \in H_{\overrightarrow{p}}^{s}$ if and only if there exist a sequence $(a^{k})_{k \in \Z}$ of $(s,\overrightarrow{p})$-atoms and a sequence $(\mu_{k})_{k \in \Z}$ of real numbers such that
\begin{equation}\label{felbontas}
f_{n} = \sum_{k \in \Z} \mu_{k} \E_{n}a^{k} \qquad \mbox{a. e.} \quad \left( n \in \N \right)
\end{equation}
and
\begin{eqnarray}\label{inf Hps}
\left\| f \right\|_{H_{\overrightarrow{p}}^{s}} \sim \inf \left\| \left( \sum_{k \in \Z} \left( \frac{\mu_{k} \chi_{ \left\{\tau_{k} < \infty \right\}}}{ \left\| \chi_{ \left\{\tau_{k} < \infty \right\}} \right\|_{\overrightarrow{p}}} \right)^{t} \right)^{1/t} \right\|_{\overrightarrow{p}},
\end{eqnarray}
where $0 < t \leq 1$ and the infimum is taken over all decompositions of the form \eqref{felbontas}.
\end{thm}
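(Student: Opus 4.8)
The plan is to prove the equivalence by the classical Herz--Weisz stopping-time/atom construction, observing throughout that the only feature of the norm we exploit is its monotonicity: if $0 \leq g \leq h$ pointwise then $\|g\|_{\pv} \leq \|h\|_{\pv}$, a property the mixed norm inherits coordinatewise from the ordinary $L_{p_i}$-norms. I treat the two implications separately, and in both the $\mu_k$ may be taken nonnegative.

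For the sufficiency direction (a decomposition gives $f \in H_{\pv}^{s}$ with controlled norm), I would start from an arbitrary decomposition $f_{n} = \sum_{k} \mu_{k} \E_{n} a^{k}$ with associated stopping times $\tau_{k}$. Two structural facts drive the argument: each $s(a^{k})$ is supported on $\{\tau_{k}<\infty\}$ (the differences $d_{n}a^{k}$ vanish for $n\le\tau_{k}$ by atom property (i)), and property (ii) gives the pointwise bound $s(a^{k}) \leq \chi_{\{\tau_{k}<\infty\}}\,\|\chi_{\{\tau_{k}<\infty\}}\|_{\pv}^{-1}$. Since the conditional quadratic variation operator $s$ is subadditive (Minkowski's inequality applied to $d_{n}f=\sum_{k}\mu_{k}d_{n}a^{k}$), I obtain
\[
s(f) \leq \sum_{k} \mu_{k}\, s(a^{k}) \leq \sum_{k} \frac{\mu_{k}\chi_{\{\tau_{k}<\infty\}}}{\|\chi_{\{\tau_{k}<\infty\}}\|_{\pv}} =: \sum_{k} b_{k}.
\]
Because $0<t\le1$, the elementary inequality $\sum_{k} b_{k} \leq \left(\sum_{k} b_{k}^{t}\right)^{1/t}$ for $b_{k}\geq0$ (i.e.\ $\|b\|_{\ell^{1}}\leq\|b\|_{\ell^{t}}$) upgrades this to a pointwise bound by the bracketed quantity of the claim; taking the $\pv$-norm and then the infimum over decompositions yields $\|f\|_{H_{\pv}^{s}}\leq c\,\inf(\cdots)$.

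For the necessity direction I would build the atoms explicitly. Since $s_{n+1}(f)^{2}=\sum_{m=0}^{n+1}\E_{m-1}|d_{m}f|^{2}$ is $\cF_{n}$-measurable, the map $\tau_{k}:=\inf\{n\in\N: s_{n+1}(f)>2^{k}\}$ is a stopping time for each $k\in\Z$, with $\{\tau_{k}<\infty\}=\{s(f)>2^{k}\}$. I set $\mu_{k}:=3\cdot2^{k}\|\chi_{\{\tau_{k}<\infty\}}\|_{\pv}$ and $a^{k}:=\mu_{k}^{-1}(f^{\tau_{k+1}}-f^{\tau_{k}})$ (and $a^{k}:=0$ where $\mu_{k}=0$); note $\|\chi_{\{\tau_{k}<\infty\}}\|_{\pv}\leq\|1\|_{\pv}<\infty$ on the product probability space, so $\mu_{k}$ is well defined. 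The telescoping identity $\sum_{k}\mu_{k}\E_{n}a^{k}=\lim_{M} f_{n}^{\tau_{M+1}}-\lim_{N} f_{n}^{\tau_{-N}}=f_{n}$ a.e.\ gives \eqref{felbontas}, using $s(f)<\infty$ a.e.\ so that $\tau_{M}\to\infty$ and $\tau_{-N}\to0$ pointwise. Atom property (i) holds since $\E_{n}(\mu_{k}a^{k})=\sum_{m=0}^{n} d_{m}f\,\chi_{\{\tau_{k}<m\le\tau_{k+1}\}}$ vanishes on $\{\tau_{k}\ge n\}$. For property (ii), the key computation is that, because $\chi_{\{\tau_{k}<m\le\tau_{k+1}\}}$ is $\cF_{m-1}$-measurable,
\[
\mu_{k}^{2}\,s(a^{k})^{2} = \sum_{m}\chi_{\{\tau_{k}<m\le\tau_{k+1}\}}\,\E_{m-1}|d_{m}f|^{2} = s_{\tau_{k+1}}(f)^{2}-s_{\tau_{k}}(f)^{2} \leq 2^{2(k+1)},
\]
the last bound holding on $\{\tau_{k}<\infty\}$ from the definition of $\tau_{k+1}$ (so $s_{\tau_{k+1}}(f)\le2^{k+1}$, whether $\tau_{k+1}$ is finite or not); hence $\|s(a^{k})\chi_{\{\tau_{k}<\infty\}}\|_{\infty}\leq 2^{k+1}/\mu_{k}\leq\|\chi_{\{\tau_{k}<\infty\}}\|_{\pv}^{-1}$. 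Substituting this $\mu_{k}$ into the right-hand side of the claim collapses the normalizing weights, leaving $3\big\|(\sum_{k}2^{kt}\chi_{\{s(f)>2^{k}\}})^{1/t}\big\|_{\pv}$; the pointwise geometric-series estimate $(\sum_{k}2^{kt}\chi_{\{s(f)>2^{k}\}})^{1/t}\leq c\,s(f)$ together with monotonicity of $\|\cdot\|_{\pv}$ bounds this by $c\,\|s(f)\|_{\pv}=c\,\|f\|_{H_{\pv}^{s}}$, giving $\inf(\cdots)\leq c\,\|f\|_{H_{\pv}^{s}}$.

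I expect the main obstacle to lie in the necessity direction, specifically the bookkeeping that makes the atom construction correct: confirming that $s(a^{k})$ is genuinely supported on $\{\tau_{k}<\infty\}$ and meets the $L_{\infty}$-normalization via the telescoping above, and justifying the a.e.\ convergence and telescoping of $\sum_{k}\mu_{k}\E_{n}a^{k}$ to $f_{n}$. By contrast, the mixed-norm nature of the statement is essentially inert here: both implications factor through a purely pointwise inequality followed by monotonicity of $\|\cdot\|_{\pv}$, so no genuinely new mixed-norm difficulty arises (in sharp contrast to Doob's inequality, where the iterated one-dimensional structure had to be exploited).
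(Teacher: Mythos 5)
Your proposal is correct and follows essentially the same route as the paper: the same stopping times $\tau_{k}=\inf\{n: s_{n+1}(f)>2^{k}\}$, the same coefficients $\mu_{k}=3\cdot 2^{k}\|\chi_{\{\tau_{k}<\infty\}}\|_{\overrightarrow{p}}$ and atoms $a^{k}=(f^{\tau_{k+1}}-f^{\tau_{k}})/\mu_{k}$, and in both directions a purely pointwise estimate followed by monotonicity of $\|\cdot\|_{\overrightarrow{p}}$ and the $\ell^{1}\hookleftarrow\ell^{t}$ embedding. The only cosmetic difference is that you verify atom property (ii) via the exact identity $s^{2}(f^{\tau_{k+1}})-s^{2}(f^{\tau_{k}})$ where the paper uses the subadditivity bound $s(a^{k})\leq(s(f^{\tau_{k+1}})+s(f^{\tau_{k}}))/\mu_{k}$; both give the required normalization.
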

\begin{proof}
Let $f \in H_{\overrightarrow{p}}^{s}$ and let us define the following stopping time:
$$
\tau_{k} := \inf\left\{ n \in \N : s_{n+1}(f) > 2^k \right\}.
$$
Obviously $f_{n}$ can be written in the form
$$
f_{n} = \sum_{k \in \Z} \left( f_{n}^{\tau_{k+1}} - f_{n}^{\tau_{k}} \right).
$$
Let
\begin{equation}\label{eq51}
\mu_{k} := 3 \cdot 2^k \left\| \chi_{ \left\{\tau_{k} < \infty \right\} } \right\|_{\overrightarrow{p}}, \quad \mbox{and} \quad a_{n}^{k} := \frac{f_{n}^{\tau_{k+1}} - f_{n}^{\tau_{k}}}{\mu_{k}}.
\end{equation}
If $\mu_{k} = 0$,  then let $a_{n}^k = 0$. If $n \leq \tau_{k}$, then $a_{n}^k=0$ and naturally
$$
f_{n} = \sum_{k \in \Z} \mu_{k} a_{n}^k.
$$
Moreover, $(a_{n}^{k})$ is $L_{2}$-bounded (see \cite{wk}), therefore there exists $a^k \in L_{2}$ such that $\mathbb{E}_{n} a^k = a_{n}^k$. Because of $s \left(f^{\tau_{k}}\right) = s_{\tau_{k}} (f) \leq 2^k$, we have that
$$
s \left(a^k\right) \leq \frac{s\left(f^{\tau_{k+1}}\right) + s \left(f^{\tau_{k}}\right)}{\mu_k} \leq \left\| \chi_{ \left\{ \tau_{k} < \infty \right\} } \right\|_{\overrightarrow{p}}^{-1},
$$
thus $a^k$ is an $(s,\overrightarrow{p})$-atom.

Since $ s^2(f-f^{\tau_k}) = s^2(f) - s^2 \left(f^{\tau_k}\right)$, thus $s(f-f^{\tau_k}) \leq s(f)$ and $s \left(f^{\tau_k}\right) \leq s(f)$. Using that $\lim_{k\to \infty} s \left(f- f^{\tau_{k}} \right) = \lim_{k \to \infty} s \left(f^{\tau_{k}}\right) = 0$ almost everywhere, by the dominated convergence theorem (see e.g. \cite{Benedek1961}) we get that
$$
\left\| f - \sum_{k=-l}^{m} \mu_k a^k \right\|_{H_{\pv}^s} \leq \left\| f - f^{\tau_{m+1}} \right\|_{H_{\pv}^s} + \left\| f^{\tau_{-l}} \right\|_{H_{\pv}^s} \to 0 \quad \left(l,m \to \infty \right),
$$
this means that $f = \sum_{k \in \Z} \mu_{k} a^k$ in the $H_{\overrightarrow{p}}^s$-norm.

Denote $\mathcal{O}_{k} := \left\{ \tau_{k} < \infty \right\} = \left\{ s(f) > 2^k \right\}$. Then for all $k \in \Z$, $\mathcal{O}_{k+1} \subset \mathcal{O}_{k}$. Moreover, for all $x \in \Omega$ and for all $0 < t \leq 1$,
$$
\sum_{k \in \Z} \left( 3 \cdot 2^k  \chi_{\mathcal{O}_{k}}(x) \right)^t \leq C \left( \sum_{k \in \Z} 3 \cdot 2^k \chi_{\mathcal{O}_{k} \setminus \mathcal{O}_{k+1}} \left(x\right) \right)^t.
$$
Since the sets $\mathcal{O}_{k} \setminus \mathcal{O}_{k+1}$ are disjoint, we have
\begin{eqnarray*}
\left\| \left( \sum_{k \in \Z} \left( \frac{\mu_{k} \chi_{ \left\{\tau_{k} < \infty \right\}}}{ \left\| \chi_{ \left\{\tau_{k} < \infty \right\}} \right\|_{\overrightarrow{p}}} \right)^{t} \right)^{1/t} \right\|_{\overrightarrow{p}} &=& \left\| \left( \sum_{k \in \Z} \left(3 \cdot 2^k \chi_{ \left\{\tau_{k} < \infty \right\}} \right)^{t} \right)^{1/t} \right\|_{\overrightarrow{p}}\\
&\leq& C \left\| \sum_{k \in \Z} 3 \cdot 2^k \chi_{ \mathcal{O}_{k} \setminus \mathcal{O}_{k+1}}   \right\|_{\overrightarrow{p}} \\
&\leq& C \left\| \sum_{k \in \Z} s(f) \chi_{ \mathcal{O}_{k} \setminus \mathcal{O}_{k+1}}   \right\|_{\overrightarrow{p}} \\
&=& C \left\| s(f) \right\|_{\overrightarrow{p}}.
\end{eqnarray*}

Conversely, if $f$ has a decomposition of the form \eqref{felbontas}, then
$$
s(f) \leq \sum_{k \in \Z} \mu_{k} s(a^k)  \leq \sum_{k \in \Z} \mu_{k}  \frac{ \chi_{ \left\{\tau_{k} < \infty \right\} } }{\left\| \chi_{ \left\{\tau_{k} < \infty \right\} } \right\|_{\overrightarrow{p}} },
$$
and so for all $0 < t \leq 1$,
\begin{eqnarray*}
\left\|f \right\|_{H_{\overrightarrow{p}}^{s}} \leq \left\| \sum_{k \in \Z} \mu_{k}  \frac{ \chi_{ \left\{\tau_{k} < \infty \right\} } }{\left\| \chi_{ \left\{\tau_{k} < \infty \right\} } \right\|_{\overrightarrow{p}} } \right\|_{\overrightarrow{p}} \leq \left\| \left( \sum_{k \in \Z} \left(   \frac{\mu_{k} \chi_{ \left\{\tau_{k} < \infty \right\} } }{\left\| \chi_{ \left\{\tau_{k} < \infty \right\} } \right\|_{\overrightarrow{p}} } \right)^t \right) ^{1/t} \right\|_{\overrightarrow{p}},
\end{eqnarray*}
which proves the theorem.
\end{proof}

For the classical martingale Hardy space $H_{p}^s$, this result is due to the second author (see \cite{wk}). For the spaces $\cQ_{\pv}$ and $\cP_{\pv}$ we can give similar decompositions.

\begin{thm}\label{atomos-felb-M}
Let $0 < \pv < \infty$. A martingale $f = \left(f_{n}\right)_{n \in \N} \in \mathcal{P}_{\overrightarrow{p}}$ (or $\in \mathcal{Q}_{\overrightarrow{p}}$) if and only if there exist a sequence $(a^{k})_{k \in \Z}$ of $(M,\overrightarrow{p})$-atoms (or $(S,\overrightarrow{p})$-atoms) and a sequence $(\mu_{k})_{k \in \Z}$ of real numbers such that \eqref{felbontas} holds and
$$
\left\| f \right\|_{\mathcal{P}_{\overrightarrow{p}}} \left(\mbox{or} \, \left\| f \right\|_{\mathcal{Q}_{\overrightarrow{p}}} \right) \sim \inf \left\| \left( \sum_{k \in \Z} \left( \frac{\mu_{k} \chi_{ \left\{\tau_{k} < \infty \right\}}}{ \left\| \chi_{ \left\{\tau_{k} < \infty \right\}} \right\|_{\overrightarrow{p}}} \right)^{t} \right)^{1/t} \right\|_{\overrightarrow{p}},
$$
where $0 < t \leq 1$ and the infimum is taken over all decompositions of the form \eqref{felbontas}.
\end{thm}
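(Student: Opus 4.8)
The plan is to mimic the proof of Theorem \ref{atomic decomp of s}, replacing the stopping times built from $s(f)$ by stopping times built from the controlling sequence $(\lambda_{n}) \in \Lambda$, and to treat $\cP_{\pv}$ and $\cQ_{\pv}$ in parallel. The only differences are that for $\cP_{\pv}$ one uses the maximal function $M$ together with the defining condition $|f_{n}| \leq \lambda_{n-1}$, while for $\cQ_{\pv}$ one uses the quadratic variation $S$ together with $S_{n}(f) \leq \lambda_{n-1}$.

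For the direct implication, suppose $f \in \cP_{\pv}$ (resp. $\cQ_{\pv}$) and fix $(\lambda_{n}) \in \Lambda$ with $|f_{n}| \leq \lambda_{n-1}$ (resp. $S_{n}(f) \leq \lambda_{n-1}$) and $\lambda_{\infty} \in L_{\pv}$. Since $\lambda_{n}$ is adapted, non-negative and non-decreasing, $\tau_{k} := \inf\{ n \in \N : \lambda_{n} > 2^{k} \}$ is a stopping time, and $\cO_{k} := \{ \tau_{k} < \infty \} = \{ \lambda_{\infty} > 2^{k} \}$ is a decreasing sequence of sets. I set $\mu_{k} := 3 \cdot 2^{k} \| \chi_{\{ \tau_{k} < \infty \}} \|_{\pv}$ and $a_{n}^{k} := (f_{n}^{\tau_{k+1}} - f_{n}^{\tau_{k}})/\mu_{k}$ as in \eqref{eq51}; as in \cite{wk} this sequence is $L_{2}$-bounded, so there is $a^{k} \in L_{2}$ with $\E_{n} a^{k} = a_{n}^{k}$, and $a_{n}^{k} = 0$ for $n \leq \tau_{k}$. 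The heart of this direction is the atom estimate. For $\cP_{\pv}$ I would use $f_{n}^{\tau} = f_{\tau \wedge n}$ together with $|f_{\tau \wedge n}| \leq \lambda_{(\tau \wedge n)-1} \leq \lambda_{\tau - 1}$ and the defining bounds $\lambda_{\tau_{k+1}-1} \leq 2^{k+1}$, $\lambda_{\tau_{k}-1} \leq 2^{k}$ to get $|f_{n}^{\tau_{k+1}} - f_{n}^{\tau_{k}}| \leq 3 \cdot 2^{k}$ on $\cO_{k}$ (and $0$ elsewhere), so that $\| M(a^{k}) \chi_{\{\tau_{k}<\infty\}} \|_{\infty} \leq \| \chi_{\{\tau_{k}<\infty\}} \|_{\pv}^{-1}$. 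For $\cQ_{\pv}$ I would instead compute $S(a^{k})^{2} = \mu_{k}^{-2} \sum_{m} |d_{m}f|^{2} \chi_{\{\tau_{k} < m \leq \tau_{k+1}\}} = \mu_{k}^{-2}(S_{\tau_{k+1}}(f)^{2} - S_{\tau_{k}}(f)^{2})$ and bound $S_{\tau_{k+1}}(f) \leq \lambda_{\tau_{k+1}-1} \leq 2^{k+1}$, again yielding $\| S(a^{k}) \chi_{\{\tau_{k}<\infty\}} \|_{\infty} \leq \| \chi_{\{\tau_{k}<\infty\}} \|_{\pv}^{-1}$. The identity $f_{n} = \sum_{k} \mu_{k} a_{n}^{k}$ for each fixed $n$ then follows from the telescoping $\sum_{k=-l}^{m}(f_{n}^{\tau_{k+1}} - f_{n}^{\tau_{k}}) = f_{n}^{\tau_{m+1}} - f_{n}^{\tau_{-l}}$, using $\tau_{m+1} \to \infty$ and $f_{n}^{\tau_{-l}} \to 0$ a.e., exactly as before. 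The quasi-norm estimate is then identical to the one in Theorem \ref{atomic decomp of s}: with $\cO_{k+1} \subset \cO_{k}$, the pointwise bound $\sum_{k}(3 \cdot 2^{k} \chi_{\cO_{k}})^{t} \leq C (\sum_{k} 3 \cdot 2^{k} \chi_{\cO_{k} \setminus \cO_{k+1}})^{t}$, the disjointness of the sets $\cO_{k} \setminus \cO_{k+1}$ and $3 \cdot 2^{k} \leq 3 \lambda_{\infty}$ there, the right-hand side of the claimed equivalence is $\leq C \| \lambda_{\infty} \|_{\pv}$; taking the infimum over admissible $(\lambda_{n})$ gives $\leq C \| f \|_{\cP_{\pv}}$ (resp. $\leq C \| f \|_{\cQ_{\pv}}$).

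For the converse, suppose $f$ has a decomposition \eqref{felbontas} into $(M,\pv)$-atoms (resp. $(S,\pv)$-atoms) $a^{k}$ with stopping times $\tau_{k}$ and $\mu_{k} \geq 0$. Since $a_{n}^{k} = \E_{n}a^{k} = 0$ for $n \leq \tau_{k}$, the function $M(a^{k})$ (resp. $S(a^{k})$) is supported on $\{ \tau_{k} < \infty \}$ and the corresponding partial sums vanish for $n \leq \tau_{k}$, so $|a_{n}^{k}| \leq \| \chi_{\{\tau_{k}<\infty\}} \|_{\pv}^{-1} \chi_{\{\tau_{k} \leq n-1\}}$ (resp. $S_{n}(a^{k}) \leq \| \chi_{\{\tau_{k}<\infty\}} \|_{\pv}^{-1} \chi_{\{\tau_{k} \leq n-1\}}$). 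This suggests setting $\lambda_{n} := \sum_{k} \mu_{k} \| \chi_{\{\tau_{k}<\infty\}} \|_{\pv}^{-1} \chi_{\{\tau_{k} \leq n\}}$, which is non-negative, non-decreasing and, because $\{ \tau_{k} \leq n \} \in \cF_{n}$, adapted, hence $(\lambda_{n}) \in \Lambda$; the subadditivity of $|\cdot|$ and of $S_{n}$ gives $|f_{n}| \leq \lambda_{n-1}$ (resp. $S_{n}(f) \leq \sum_{k} \mu_{k} S_{n}(a^{k}) \leq \lambda_{n-1}$). Therefore $\| f \|_{\cP_{\pv}} \leq \| \lambda_{\infty} \|_{\pv}$ (resp. $\| f \|_{\cQ_{\pv}} \leq \| \lambda_{\infty} \|_{\pv}$) with $\lambda_{\infty} = \sum_{k} \mu_{k} \| \chi_{\{\tau_{k}<\infty\}} \|_{\pv}^{-1} \chi_{\{\tau_{k}<\infty\}}$, and since $0 < t \leq 1$ yields $\sum_{k} b_{k} \leq (\sum_{k} b_{k}^{t})^{1/t}$ pointwise for $b_{k} \geq 0$, monotonicity of $\| \cdot \|_{\pv}$ bounds $\| \lambda_{\infty} \|_{\pv}$ by the right-hand side of the equivalence. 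Taking the infimum over all decompositions finishes the proof.

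The main obstacle I anticipate is the converse direction: producing an explicit controlling sequence $(\lambda_{n})$ that genuinely lies in $\Lambda$ (in particular adapted and non-decreasing) while still dominating $|f_{n}|$ (resp. $S_{n}(f)$). The key point, which must be used carefully, is that each atom's contribution switches on exactly at its stopping time, i.e. the vanishing of $a_{n}^{k}$ (resp. of $S_{n}(a^{k})$) for $n \leq \tau_{k}$ produces the indicator $\chi_{\{\tau_{k} \leq n\}}$ and thereby converts the pointwise bound into an adapted, non-decreasing sequence. Everything else is a pointwise rearrangement followed by the monotonicity of the mixed quasi-norm, so no new mixed-norm phenomenon beyond that already exploited for $H_{\pv}^{s}$ should arise.
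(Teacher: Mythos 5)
Your proposal is correct and follows essentially the same route as the paper: the same stopping times $\tau_{k}=\inf\{n:\lambda_{n}>2^{k}\}$ with $\mu_{k},a^{k}$ as in \eqref{eq51} for the direct part, and the same adapted controlling sequence $\lambda_{n}=\sum_{k}\mu_{k}\chi_{\{\tau_{k}\le n\}}\|M(a^{k})\|_{\infty}$ (resp.\ its $S$-analogue) for the converse. You merely spell out the atom verification that the paper leaves implicit.
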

\begin{proof}
Let $f \in \mathcal{P}_{\overrightarrow{p}}$ and $ \left(\lambda_{n}\right)_{n \in \N}$ be a sequence such that $|f_{n}| \leq \lambda_{n-1}$ and $\lambda_{\infty} = \sup_{n} \lambda_{n} \in L_{\overrightarrow{p}}$. Let the stopping time $\tau_{k}$ be defined by
$$
\tau_{k} := \inf \left\{ n \in \N : \lambda_{n} > 2^k \right\}
$$
and $\mu_{k}$ and $a_{n}^k$ be given by \eqref{eq51}. Then again $f_{n} = \sum_{k \in \Z} \mu_{k} a_{n}^k$ and we can prove as before that
$$
\left\| \left( \sum_{k \in \Z} \left( \frac{\mu_{k} \chi_{ \left\{\tau_{k} < \infty \right\}}}{ \left\| \chi_{ \left\{\tau_{k} < \infty \right\}} \right\|_{\overrightarrow{p}}} \right)^{t} \right)^{1/t} \right\|_{\overrightarrow{p}} \leq C \left\| f \right\|_{\mathcal{P}_{\overrightarrow{p}}}.
$$

Conversely, assume that for some $\mu_{k}$ and $a_{n}^{k}$, the martingale $(f_{n})$ can be written in the form \eqref{felbontas}. For $n \in \N$, let us define
$$
\lambda_{n} := \sum_{k \in \Z} \mu_{k} \chi_{ \{ \tau_{k} \leq n \} } \left\| M \left(a^k \right) \right\|_{\infty}.
$$
It is clear, that $(\lambda_{n})$ is a nonnegative adapted sequence and for all $n \in \N$, $|f_{n}| \leq \lambda_{n-1}$. Therefore, for all $0 < t \leq 1$,
$$
\|f\|_{\cP_{\pv}} \leq \|\lambda_{\infty}\|_{\pv} \leq \left\| \sum_{k \in \Z} \frac{\mu_{k} \chi_{ \{ \tau_{k} < \infty \} } }{ \| \chi_{ \{ \tau_{k} < \infty \} } \|_{\pv} } \right\|_{\pv} \leq \left\| \left( \sum_{k \in \Z} \left( \frac{\mu_{k} \chi_{ \{ \tau_{k} < \infty \} } }{ \| \chi_{ \{ \tau_{k} < \infty \} } \|_{\pv} } \right)^t \right)^{1/t} \right\|_{\pv}.
$$
The case of $\cQ_{\pv}$ is similar.
\end{proof}

The stochastic basis $(\cF_{n})$ is said to be \emph{regular}, if there exists $R > 0$ such that for all $A \in \cF_{n}$ there exists $B \in \cF_{n-1}$ for which $A \subset B$ and $\P(B) \leq R \, \P(A)$. If the stochastic basis is regular, then atomic decomposition can also be proved for the remainder two martingale Hardy spaces, $H_{\overrightarrow{p}}^{M}$ and $H_{\overrightarrow{p}}^{S}$.

\begin{thm}\label{atomos-felb-M_and_S}
Let $0 < \pv < \infty$ and suppose that the stochastic basis $(\cF_{n})$ is regular. A martingale $f = \left(f_{n}\right)_{n \in \N} \in H_{\overrightarrow{p}}^{M}$ (or $\in H_{\overrightarrow{p}}^{S}$) if and only if there exist a sequence $(a^{k})_{k \in \Z}$ of $(M,\overrightarrow{p})$-atoms (or $(S,\overrightarrow{p})$-atoms) and a sequence $(\mu_{k})_{k \in \Z}$ of real numbers such that \eqref{felbontas} holds and
$$
\left\| f \right\|_{H_{\overrightarrow{p}}^{M}} \left(\mbox{or} \, \left\| f \right\|_{H_{\overrightarrow{p}}^{S}} \right) \sim \inf \left\| \left( \sum_{k \in \Z} \left( \frac{\mu_{k} \chi_{ \left\{\tau_{k} < \infty \right\}}}{ \left\| \chi_{ \left\{\tau_{k} < \infty \right\}} \right\|_{\overrightarrow{p}}} \right)^{t} \right)^{1/t} \right\|_{\overrightarrow{p}},
$$
where $0 < t < \min\{p_{1},\ldots,p_{d},1 \}$ and the infimum is taken over all decompositions of the form \eqref{felbontas}.
\end{thm}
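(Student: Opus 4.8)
The plan is to build the decomposition directly by stopping times, treating the two assertions in parallel: I carry out the $H_{\pv}^M$ case with $(M,\pv)$-atoms and only indicate at the end that replacing $M_n(f)$ by $S_n(f)$ and $Mf$ by $Sf$ gives the $H_{\pv}^S$ case with $(S,\pv)$-atoms. The whole point of assuming regularity is that the naive stopping time $\inf\{n: M_{n+1}(f)>2^k\}$ is not adapted, because $M_{n+1}(f)$ is $\cF_{n+1}$- and not $\cF_n$-measurable (this is exactly why the construction in Theorem \ref{atomic decomp of s} went through unconditionally, since $s_{n+1}(f)$ is $\cF_n$-measurable). I would use the standard consequence of regularity: there is $R>0$ with $g_n\le Rg_{n-1}$ for every nonnegative martingale $g$, equivalently $X\le R\,\E_{n-1}X$ for every nonnegative $\cF_n$-measurable $X$. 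Fix $0<t<\min\{p_1,\ldots,p_d,1\}$ and set $g_n:=R\,\E_n((Mf)^t)$. Applying regularity to $X=|f_j|^t$ gives $|f_j|^t\le R\,\E_{j-1}(|f_j|^t)\le R\,\E_{j-1}((Mf)^t)=g_{j-1}$. I then define the genuine stopping times $\tau_k:=\inf\{n: g_n>2^{kt}\}$ and, as in \eqref{eq51}, put $\mu_k:=3\cdot2^k\|\chi_{\{\tau_k<\infty\}}\|_{\pv}$ and $a^k_n:=(f_n^{\tau_{k+1}}-f_n^{\tau_k})/\mu_k$, so that \eqref{felbontas} holds by telescoping.

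The key step is to check that each $a^k$ is an $(M,\pv)$-atom. Condition (i) holds because $a^k_n=0$ for $n\le\tau_k$. For condition (ii), observe that for $j\le\tau_k$ the definition of $\tau_k$ forces $g_{j-1}\le2^{kt}$, whence $|f_j|^t\le g_{j-1}\le2^{kt}$, i.e. $|f_j|\le2^k$; therefore $M(f^{\tau_k})=M_{\tau_k}(f)\le2^k$ and similarly $M(f^{\tau_{k+1}})\le2^{k+1}$, so that $M(a^k)\le(2^{k+1}+2^k)/\mu_k=\|\chi_{\{\tau_k<\infty\}}\|_{\pv}^{-1}$; since $a^k$ vanishes on $\{\tau_k=\infty\}$, $M(a^k)$ is supported there as required. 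The convergence of $\sum_k\mu_k a^k$ to $f$ in norm is obtained exactly as in Theorem \ref{atomos-felb-M}, using the $L_2$-boundedness of $(a^k_n)_n$ together with $M_{\tau_{-l}}(f)\le2^{-l}\to0$.

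The main obstacle, and the source of the restriction $t<\min_i p_i$, is bounding the atomic quasi-norm by $\|Mf\|_{\pv}$ for every $0<\pv<\infty$ (in particular for $\pv\le1$, where Doob's inequality is unavailable at the exponent $\pv$). Writing $\cO_k:=\{\tau_k<\infty\}=\{\sup_n g_n>2^{kt}\}$, which are nested, the elementary estimate already used in Theorem \ref{atomic decomp of s} gives
$$\left(\sum_{k\in\Z}\left(3\cdot2^k\chi_{\cO_k}\right)^t\right)^{1/t}\le C\sum_{k\in\Z}2^k\chi_{\cO_k\setminus\cO_{k+1}}\le C\,\Big(\sup_n g_n\Big)^{1/t}.$$
Taking the $L_{\pv}$-quasinorm and using the homogeneity $\|h^{1/t}\|_{\pv}=\|h\|_{\pv/t}^{1/t}$ reduces everything to a maximal inequality for the martingale $(g_n)=(R\,\E_n((Mf)^t))$ at the exponent $\pv/t$. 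Since $t<\min_i p_i$ we have $1<\pv/t<\infty$, so Theorem \ref{M korlatossaga} applies and yields $\|\sup_n g_n\|_{\pv/t}\le C\|(Mf)^t\|_{\pv/t}=C\|Mf\|_{\pv}^t$; raising to the power $1/t$ gives the claimed bound. This power trick is precisely what lets $\pv\le1$ be handled and is what fails once $t\ge\min_i p_i$.

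The converse inequality requires neither regularity nor Doob's inequality and only uses $0<t\le1$: for any decomposition into $(M,\pv)$-atoms, subadditivity of $M$ gives $M(f)\le\sum_k\mu_k M(a^k)$, the atom property together with the support condition $M(a^k)=M(a^k)\chi_{\{\tau_k<\infty\}}$ gives $M(f)\le\sum_k\mu_k\chi_{\{\tau_k<\infty\}}/\|\chi_{\{\tau_k<\infty\}}\|_{\pv}$, and the embedding $\ell^t\hookrightarrow\ell^1$ (valid for $t\le1$) bounds the right-hand side by the atomic quasi-norm. The two directions give $\|f\|_{H_{\pv}^M}\sim\inf\|\cdots\|_{\pv}$; running the identical argument with $S$ in place of $M$ — now using $(S_j(f))^t\le R\,\E_{j-1}((Sf)^t)$ to get $S_{\tau_k}(f)\le2^k$ — settles $H_{\pv}^S$, and comparison with Theorem \ref{atomos-felb-M} yields en passant $H_{\pv}^M\sim\cP_{\pv}$ and $H_{\pv}^S\sim\cQ_{\pv}$.
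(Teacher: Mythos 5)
Your argument is correct, but it takes a genuinely different route from the paper's. The paper works with the set-theoretic form of regularity: it enlarges each level set $I_{k,j}=\{\varrho_{k}=j\}\in\cF_{j}$ of the naive stopping time $\varrho_{k}=\inf\{n:|f_{n}|>2^{k}\}$ to an $\cF_{j-1}$-measurable set $\overline{I}_{k,j}$ with $\P(\overline{I}_{k,j})\leq R\,\P(I_{k,j})$, builds $\tau_{k}$ from these, and then has to pay for the fact that the sets $\overline{I}_{k,j}$ are neither nested nor level sets of a single function: it needs the separate Lemma \ref{megallasi-idok} to control $\|\chi_{\{\tau_{k}<\infty\}}\|_{\pv}$, and a duality--H\"older--Doob computation (testing against $g$ with $\|g\|_{(\pv/t)'}\leq 1$ and using the boundedness of $M$ on $L_{(\pv/t)'}$) to bound the atomic quasi-norm $Z$. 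You instead use the equivalent pointwise form of regularity, $X\leq R\,\E_{n-1}X$ for nonnegative $\cF_{n}$-measurable $X$, to manufacture a single predictable majorant $g_{n-1}^{1/t}\geq|f_{n}|$ with $g_{n}=R\,\E_{n}((Mf)^{t})$; the level sets $\{\sup_{n}g_{n}>2^{kt}\}$ are then nested, so the disjointification already used in Theorem \ref{atomic decomp of s} applies verbatim and the whole estimate collapses to one application of Theorem \ref{M korlatossaga} to the auxiliary martingale $(g_{n})$ at exponent $\pv/t>1$. In effect you first prove $\|f\|_{\cP_{\pv}}\leq C\|f\|_{H_{\pv}^{M}}$ (resp.\ $\|f\|_{\cQ_{\pv}}\leq C\|f\|_{H_{\pv}^{S}}$) and then run the construction of Theorem \ref{atomos-felb-M}, which is shorter, dispenses with Lemma \ref{megallasi-idok}, and makes transparent why $t<\min\{p_{1},\ldots,p_{d},1\}$ is needed (so that $\pv/t>1$ for Doob's inequality). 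The only caveats are minor: the equivalence of the set condition with the pointwise form $X\leq R\,\E_{n-1}X$ is classical but is stated, not proved, in your write-up (the paper itself silently relies on the same equivalence when it invokes $S_{n}(f)\leq R^{1/2}s_{n}(f)$ in Theorem \ref{t3}); you should note that $(Mf)^{t}\in L_{\pv/t}\subset L_{1}$ so that $\E_{n}((Mf)^{t})$ is well defined; and the phrase ``$M(a^{k})$ is supported there'' should refer to $\{\tau_{k}<\infty\}$ rather than $\{\tau_{k}=\infty\}$. None of these affects the validity of the proof.
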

\begin{proof}
We will prove the theorem only for $H_{\pv}^{M}$. The case of $H_{\pv}^S$ is similar. Suppose that $f \in H_{\pv}^{M}$ and for $k \in \Z$, let us define the stopping time $\varrho_{k} := \inf \{ n \in \N : |f_{n}| > 2^k \}$. Since $(\cF_{n})$ is regular and the set $I_{k,j} := \{ \varrho_{k} = j \} \in \cF_{j}$, there exist $\overline{I}_{k,j} \in \cF_{j-1}$ such that $I_{k,j} \subset \overline{I}_{k,j}$ and $\P(\overline{I}_{k,j}) \leq R \, \P(I_{k,j})$. Let us define an other stopping time
$$
\tau_{k}(x) := \inf\{ n \in \N : x \in \overline{I}_{k,n+1} \} \quad \left( x \in \Omega, \, k \in \Z \right).
$$
Then $\tau_{k}$ is non-decreasing and using Lemma \ref{megallasi-idok},
$$
\| \chi_{ \{ \tau_{k} < \infty \}} \|_{\pv} \leq C \, \| \chi_{ \{ \varrho_{k} < \infty \}} \|_{\pv} = \| \chi_{Mf > 2^k}\|_{\pv} \leq 2^{-k} \, \|Mf\|_{\pv}  \to 0 \quad \left( k \to \infty \right),
$$
which means that $\lim_{k \to \infty} \P(\{ \tau_{k} < \infty \}) = 0$, that is $\lim_{k \to \infty} \tau_{k} = \infty$ almost everywhere and therefore
$$
\lim_{k \to \infty} f_{n}^{\tau_{k}} = f_{n} \quad \left( a.e., \, n \in \N \right).
$$
Let $\mu_{k}$ and $a_{n}^{k}$ be defined again as in \eqref{eq51}. Then $a^k = (a_{n}^{k})$ is again an $(M,\pv)$-atom. For all $0 < t < \min\{p_{1},\ldots,p_{d},1 \}$,
$$
Z := \left\| \left[ \sum_{k \in \Z} \left( \sum_{j \in \N} \frac{\mu_k \chi_{\overline{I}_{k,j}} }{\| \chi_{\{\tau_{k} < \infty \}} \|_{\pv}} \right)^t  \right]^{1/t}  \right\|_{\pv} \leq \left\| \sum_{k \in \Z}  \sum_{j \in \N} \left( 3 \cdot 2^k  \right)^t \chi_{\overline{I}_{k,j}}  \right\|_{\pv/t}^{1/t},
$$
and therefore, using Lemma \ref{norma-sup} and Hölder's inequality,
\begin{eqnarray*}
Z^t &=& \int_{\Omega} \sum_{k \in \Z} \sum_{j \in N} \left(3 \cdot 2^k\right)^t \chi_{\overline{I}_{k,j}} g \, d\P \\
&\leq& \sum_{k \in \Z} \sum_{j \in N} \left(3 \cdot 2^k\right)^t \P(\overline{I}_{k,j})^{1/r} \left( \int_{\Omega} \chi_{\overline{I}_{k,j}} \, g^{r'} \, d\P \right)^{1/r'} \\
&=& \sum_{k \in \Z} \sum_{j \in N} \left(3 \cdot 2^k\right)^t \P(\overline{I}_{k,j}) \left( \frac{1}{\P(\overline{I}_{k,j})} \int_{\overline{I}_{k,j}} g^{r'} \, d\P \right)^{1/r'},
\end{eqnarray*}
where $\|g\|_{(\pv/t)'} \leq 1$. The stochastic basis $(\cF_{n})$ is regular and by $(\pv/t)' > 1$, $M$ is bounded on $L_{(\pv/t)'}$, therefore
\begin{eqnarray*}
Z^t &\leq& R \, \sum_{k \in \Z} \sum_{j \in N} \left(3 \cdot 2^k\right)^t \P(I_{k,j}) \left( \frac{1}{\P(\overline{I}_{k,j})} \int_{\overline{I}_{k,j}} g^{r'} \, d\P \right)^{1/r'} \\
&\leq& R \, \sum_{k \in \Z} \sum_{j \in N} \left(3 \cdot 2^k\right)^t \int_{\Omega} \chi_{I_{k,j}} \left[ M(g^{r'})\right]^{1/r'} \, d\P \\
&\leq& R \left\| \sum_{k \in \Z} \sum_{j \in N} \left(3 \cdot 2^k\right)^t \chi_{I_{k,j}} \right\|_{\pv/t} \left\| \left[ M(g^{r'}) \right]^{1/r'} \right\|_{(\pv/t)'} \\
&\leq& C \, R \, \left\| \sum_{k \in \Z} \left(3 \cdot 2^k \chi_{ \{ \varrho_{k} < \infty \} } \right)^t \right\|_{\pv/t} \\
&=& C \left\| \left[ \sum_{k \in \Z} \left(3 \cdot 2^k \chi_{ \{ Mf > 2^k \} } \right)^t \right]^{1/t} \right\|_{\pv}^{t},
\end{eqnarray*}
where we have used that $\{ \varrho_{k} < \infty \} = \{ Mf > 2^k \}$. So we have that
$$
Z \leq C \left\| \left[ \sum_{k \in \Z} \left(3 \cdot 2^k \chi_{ \{ Mf > 2^k \} } \right)^t \right]^{1/t} \right\|_{\pv},
$$
where the right hand side can be estimated by $\|f\|_{H_{\pv}^M}$, similarly as in the proof of Theorem \ref{atomic decomp of s}.

Conversely, if $f$ has a decomposition of the form $\eqref{felbontas}$, then
$$
\|f\|_{H_{\pv}^M} \leq \left\| \sum_{k \in \Z} \mu_{k} \frac{\chi_{ \{ \tau_{k} < \infty \} } }{\| \chi_{ \{ \tau_{k} < \infty \} } \|_{\pv}} \right\|_{\pv} \leq \left\| \left[ \sum_{k \in \Z} \left( \mu_{k} \frac{\chi_{ \{ \tau_{k} < \infty \} } }{\| \chi_{ \{ \tau_{k} < \infty \} } \|_{\pv}} \right)^t \right]^{1/t} \right\|_{\pv}
$$
can be proved similarly as in Theorem \ref{atomic decomp of s}.
\end{proof}

\begin{lem}\label{megallasi-idok}
Let $0 < \pv < \infty$ and suppose that the stochastic basis $(\cF_{n})$ is regular. If $\varrho_{k}$ and $\tau_{k}$ are the stopping times defined in the proof of Theorem \ref{atomos-felb-M_and_S}, then
$$
\| \chi_{ \{ \tau_{k} < \infty \}} \|_{\pv} \leq C \, \| \chi_{ \{ \varrho_{k} < \infty \}} \|_{\pv}.
$$
\end{lem}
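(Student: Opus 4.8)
The plan is to dualise and move the weight onto a maximal function, exactly as in the boundedness half of Theorem \ref{atomos-felb-M_and_S}, but now only for the two indicators $\chi_A$ and $\chi_B$, where I abbreviate $A := \{ \tau_k < \infty \} = \bigcup_{j} \overline{I}_{k,j}$ (directly from the definition $\tau_k(x) = \inf\{ n : x \in \overline{I}_{k,n+1}\}$) and $B := \{ \varrho_k < \infty \} = \bigcup_{j} I_{k,j}$; the latter is a \emph{disjoint} union because $\varrho_k$ takes a single finite value on $\{\varrho_k < \infty\}$. Since both the duality formula (Lemma \ref{norma-sup}) and the boundedness of $M$ (Theorem \ref{M korlatossaga}) require exponents strictly above $1$, I would first reduce to that range by the usual power trick: fix $0 < t < \min\{p_1,\ldots,p_d\}$ and set $\qv := \pv / t$, so that $1 < \qv < \infty$ and $\|\chi_S\|_{\pv} = \|\chi_S\|_{\qv}^{1/t}$ for every measurable set $S$ (use $|\chi_S|^{t} = \chi_S$). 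Hence it suffices to prove $\|\chi_A\|_{\qv} \leq C \|\chi_B\|_{\qv}$ and then raise to the power $1/t$.

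Next I would apply Lemma \ref{norma-sup} to write $\|\chi_A\|_{\qv} = \sup \int_{\Omega} \chi_A \, g \, d\P$, the supremum taken over $g \geq 0$ with $\|g\|_{\qvc} \leq 1$ (note $1 < \qvc < \infty$). Then $\int_{\Omega} \chi_A g \, d\P \leq \sum_{j} \int_{\overline{I}_{k,j}} g \, d\P$, and the heart of the matter is the transfer estimate $\int_{\overline{I}_{k,j}} g \, d\P \leq R \int_{I_{k,j}} M g \, d\P$. Granting it, summation over $j$ together with the disjointness of the $I_{k,j}$ gives $\int_{\Omega} \chi_A g \, d\P \leq R \int_{\Omega} \chi_B \, M g \, d\P$; Hölder's inequality (Lemma \ref{holder}) and the boundedness of $M$ on $L_{\qvc}$ (Theorem \ref{M korlatossaga}, since $1 < \qvc < \infty$) then yield $\int_{\Omega} \chi_B \, M g \, d\P \leq \|\chi_B\|_{\qv} \|Mg\|_{\qvc} \leq C \|\chi_B\|_{\qv} \|g\|_{\qvc} \leq C \|\chi_B\|_{\qv}$. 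Taking the supremum over $g$ closes the argument.

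The one genuinely delicate step, and the only place where regularity enters, is the transfer estimate; here the atomic structure of the regular basis must be exploited, for $\E_{j-1} g$ need not be constant on $\overline{I}_{k,j}$ and the average of $g$ over $\overline{I}_{k,j}$ is not dominated by $M g(x)$ at an arbitrary $x \in I_{k,j}$. To circumvent this I would realise the regular cover atomwise: split $I_{k,j}$ into its $\cF_j$-atoms $I'$, let $\overline{I}'$ be the parent $\cF_{j-1}$-atom of $I'$, and take $\overline{I}_{k,j} := \bigcup_{I' \subseteq I_{k,j}} \overline{I}'$. Applying the hypothesis to a single atom $I' \in \cF_j$ produces $B' \in \cF_{j-1}$ with $I' \subseteq B'$ and $\P(B') \leq R \P(I')$; since the $\cF_{j-1}$-atom $\overline{I}'$ meets $B'$ it lies in $B'$, whence $\P(\overline{I}') \leq R \P(I')$, and summing over the disjoint $I'$ shows this $\overline{I}_{k,j}$ is a legitimate cover with $\P(\overline{I}_{k,j}) \leq R \P(I_{k,j})$. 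On each atom $\overline{I}'$ the conditional expectation $\E_{j-1} g$ is constant, with value at most $M g(x)$ for every $x \in I' \subseteq \overline{I}'$, so $\int_{\overline{I}'} g \, d\P = \P(\overline{I}') \, \E_{j-1} g \leq R \, \P(I') \inf_{I'} M g \leq R \int_{I'} M g \, d\P$. With this choice $A = \bigcup_{j} \bigcup_{I' \subseteq I_{k,j}} \overline{I}'$, so $\chi_A \leq \sum_{j} \sum_{I'} \chi_{\overline{I}'}$, and since the atoms $I'$ partition $B$, summing the atomwise bounds reproduces $\int_{\Omega} \chi_A g \, d\P \leq R \int_{\Omega} \chi_B \, M g \, d\P$, exactly the inequality used above. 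The whole difficulty is concentrated in this atomwise maximal transfer; the surrounding duality–Hölder–Doob chain is then routine.
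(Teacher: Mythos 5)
Your proof is correct, and its skeleton coincides with the paper's: reduce to exponents strictly above $1$ by the power trick ($\varepsilon$ in the paper, $t$ in your version), dualise via Lemma \ref{norma-sup}, use regularity to transfer the mass of $g$ from $\overline{I}_{k,j}$ to $I_{k,j}$ through the maximal function, and close with H\"older's inequality and Doob's inequality (Theorem \ref{M korlatossaga}) applied to $M$ on the conjugate space. There are two genuine differences. First, the paper inserts an auxiliary pointwise H\"older step with an exponent $\pv/\varepsilon < r < \infty$ and carries $g^{r'}$ and $\bigl[M(g^{r'})\bigr]^{1/r'}$ through the whole computation; you work directly with $g$ and $Mg$, which is simpler and costs nothing, since $M$ is bounded on $L_{(\pv/t)'}$ exactly as it is on $L_{(\pv/\varepsilon)'/r'}$. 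Second, and more substantively, you justify the one delicate inequality
$$
\P(\overline{I}_{k,j})\,\Bigl(\tfrac{1}{\P(\overline{I}_{k,j})}\int_{\overline{I}_{k,j}} g \, d\P\Bigr) \leq R \int_{I_{k,j}} Mg \, d\P
$$
by choosing $\overline{I}_{k,j}$ as the union of the parent $\cF_{j-1}$-atoms of the $\cF_{j}$-atoms of $I_{k,j}$, so that the relevant average is literally a value of $\E_{j-1}g$ and hence dominated by $Mg$ on $I_{k,j}$. The paper performs the corresponding step (passing from the average of $g^{r'}$ over $\overline{I}_{k,n}$ to $M(g^{r'})$ on $I_{k,n}$) without comment, and for a general set $\overline{I}_{k,n}$ supplied by the regularity condition that passage is not automatic; your atomwise construction is precisely the choice of $\overline{I}_{k,j}$ that makes both your argument and the paper's rigorous. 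So your route is a legitimate, slightly cleaner and more careful variant of the same proof.
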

\begin{proof}
It is enough to prove that for some $0 < \varepsilon < \min\{ 1, p_{1}, \ldots, p_{d} \}$, the inequality
$$
\| \chi_{ \{ \tau_{k} < \infty \}} \|_{\pv/\varepsilon} \leq C \, \| \chi_{ \{ \varrho_{k} < \infty \}} \|_{\pv/\varepsilon}
$$
holds. Notice that
$$
\| \chi_{ \{ \tau_{k} < \infty \}} \|_{\pv/\varepsilon} = \left\| \sum_{n \in \N} \chi_{\overline{I}_{k,n}} \right\|_{\pv/\varepsilon} \quad \mbox{and} \quad \| \chi_{ \{ \varrho_{k} < \infty \}} \|_{\pv/\varepsilon} = \left\| \sum_{n \in \N} \chi_{I_{k,n}} \right\|_{\pv/\varepsilon}.
$$
By Lemma \ref{norma-sup}, there exists a function $g$ with $\|g\|_{(\pv/\varepsilon)'} \leq 1$, such that
$$
\| \chi_{ \{ \tau_{k} < \infty \}} \|_{\pv/\varepsilon} = \int_{\Omega} \sum_{n \in \N} \chi_{\overline{I}_{k,n}} \, g \, d\P.
$$
Using Hölder's inequality with $\pv/\varepsilon < r < \infty$ and the regularity of $(\cF_{n})$, we obtain
\begin{eqnarray*}
\| \chi_{ \{ \tau_{k} < \infty \}} \|_{\pv/\varepsilon} &\leq& \sum_{n \in \N} \P(\overline{I}_{k,n})^{1/r} \left( \int_{\Omega} \chi_{\overline{I}_{k,n}} \, g^{r'} \, d\P \right)^{1/r'}\\
&=& \sum_{n \in \N} \P(\overline{I}_{k,n}) \left(\frac{1}{\P(\overline{I}_{k,n})} \int_{\Omega} \chi_{\overline{I}_{k,n}} \, g^{r'} \, d\P \right)^{1/r'} \\
&\leq& R \, \sum_{n \in \N} \int_{\Omega} \chi_{I_{k,n}} \left[ M(g^{r'}) \right]^{1/r'} \, d\P.
\end{eqnarray*}
By Lemma \ref{holder}, we get that
\begin{eqnarray*}
\| \chi_{ \{ \tau_{k} < \infty \}} \|_{\pv/\varepsilon} &\leq& R \, \left\| \sum_{n \in \N} \chi_{I_{k,n}} \right\|_{\pv/\varepsilon} \left\| \left[ M(g^{r'}) \right]^{1/r'} \right\|_{\pv/\varepsilon}\\
&\leq& C \, R \, \| \chi_{ \{ \varrho_{k} < \infty \}} \|_{\pv/\varepsilon},
\end{eqnarray*}
where we have used that $(\pv/\varepsilon)' > 1$ and therefore $M$ is bounded on $L_{(\pv/\varepsilon)'}$. The proof is complete.
\end{proof}

\begin{cor}\label{ekvivalens}
If the stochastic basis $(\cF_{n})$ is regular, then
$$
H_{\pv}^{S} = \cQ_{\pv} \quad \mbox{and} \quad H_{\pv}^{M} = \cP_{\pv} \quad \left( 0 < \pv < \infty \right)
$$
with equivalent quasi-norms.
\end{cor}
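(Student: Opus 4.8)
The plan is to read the corollary off directly from the two atomic decomposition theorems already established, exploiting the fact that $H_{\pv}^{M}$ and $\cP_{\pv}$ are characterized by \emph{the same} atoms, and likewise $H_{\pv}^{S}$ and $\cQ_{\pv}$. Indeed, Theorem \ref{atomos-felb-M} expresses membership in $\cP_{\pv}$ (respectively $\cQ_{\pv}$) in terms of decompositions \eqref{felbontas} into $(M,\overrightarrow{p})$-atoms (respectively $(S,\overrightarrow{p})$-atoms), while Theorem \ref{atomos-felb-M_and_S}, under the regularity hypothesis, characterizes $H_{\pv}^{M}$ (respectively $H_{\pv}^{S}$) by decompositions \eqref{felbontas} into \emph{exactly the same} $(M,\overrightarrow{p})$-atoms (respectively $(S,\overrightarrow{p})$-atoms). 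Crucially, in all four statements the equivalent quasi-norm is given by one and the same infimum
$$
\inf \left\| \left( \sum_{k \in \Z} \left( \frac{\mu_{k} \chi_{ \left\{\tau_{k} < \infty \right\}}}{ \left\| \chi_{ \left\{\tau_{k} < \infty \right\}} \right\|_{\overrightarrow{p}}} \right)^{t} \right)^{1/t} \right\|_{\overrightarrow{p}}
$$
taken over all admissible decompositions of $f$.

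First I would treat the pair $H_{\pv}^{M}$ and $\cP_{\pv}$. Since a decomposition of the form \eqref{felbontas} into $(M,\overrightarrow{p})$-atoms is simultaneously an admissible decomposition in Theorem \ref{atomos-felb-M} and in Theorem \ref{atomos-felb-M_and_S}, a martingale $f$ lies in $\cP_{\pv}$ if and only if it lies in $H_{\pv}^{M}$; that is, the two spaces coincide as sets. Fixing one exponent $t$ admissible in both theorems and evaluating the common infimum above gives
$$
\|f\|_{H_{\pv}^{M}} \sim \inf (\cdots) \sim \|f\|_{\cP_{\pv}},
$$
so the quasi-norms are equivalent. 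Repeating the argument verbatim with $(S,\overrightarrow{p})$-atoms in place of $(M,\overrightarrow{p})$-atoms, and Theorems \ref{atomos-felb-M} and \ref{atomos-felb-M_and_S} for $\cQ_{\pv}$ and $H_{\pv}^{S}$, yields $H_{\pv}^{S} = \cQ_{\pv}$ with equivalent quasi-norms.

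The only point requiring care is the choice of the exponent $t$: Theorem \ref{atomos-felb-M} permits any $0 < t \leq 1$, whereas Theorem \ref{atomos-felb-M_and_S} requires $0 < t < \min\{p_{1},\ldots,p_{d},1\}$. Since each theorem furnishes the equivalence for \emph{every} $t$ in its own range, it suffices to pick a single $t$ in the intersection, e.g. any $0 < t < \min\{p_{1},\ldots,p_{d},1\}$; for such a $t$ the two infima are literally identical and the comparison is immediate. Beyond this bookkeeping there is no genuine obstacle, as all the analytic content — the regularity argument and the weighted maximal estimate via Lemma \ref{megallasi-idok} — has already been absorbed into Theorem \ref{atomos-felb-M_and_S}.
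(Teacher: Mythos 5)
Your proposal is correct and is precisely the argument the paper intends: the corollary is stated without proof immediately after Theorems \ref{atomos-felb-M} and \ref{atomos-felb-M_and_S} because, under regularity, both members of each pair are characterized by the same atomic decompositions with the same equivalent quasi-norm. Your remark about choosing $t$ in the common range $0 < t < \min\{p_{1},\ldots,p_{d},1\}$ is the right way to make the comparison of the two infima literal.
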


\section{Martingale inequalities}\label{mart ineq}

We will prove the analogous version of the classical martingale inequalities (see e.g., Weisz \cite{wk})
for the five mixed martingale Hardy spaces. To this end, we need the following boundedness results.

Let $X$ be a martingale space, $Y$ be a measurable function space. Then the operator $U: X \to Y$ is called \emph{$\sigma$-sublinear operator}, if for any $\alpha \in \C$,
$$
\left| U \left( \sum_{k=1}^{\infty} f_{k} \right) \right| \leq \sum_{k=1}^{\infty} |U(f_{k})| \quad \mbox{and} \quad  \left| U\left( \alpha f \right) \right| = |\alpha| |U(f)|.
$$

\begin{thm}\label{T korl Hps-bol}
Let $0 < \pv < \infty$ and suppose that the $\sigma$-sublinear operator $T: H_{r}^{s} \to L_{r}$ is bounded, where $\pv = (p_{1}, \ldots, p_{d})$ and $r > p_{i}$ $(i=1,\ldots,d)$. If for all $(s,\overrightarrow{p})$-atom $a$
\begin{equation}\label{T-atom_s}
(Ta)\chi_{A} = T \left(a \chi_{A}\right) \quad \left(A \in \mathcal{F}_{\tau}\right),
\end{equation}
where $\tau$ is the stopping time associated with the $(s,\overrightarrow{p})$-atom $a$, then for all $f \in H_{\overrightarrow{p}}^{s}$,
$$
\left\| Tf \right\|_{\overrightarrow{p}} \leq C \left\|f\right\|_{H_{\overrightarrow{p}}^s}.
$$
\end{thm}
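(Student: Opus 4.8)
The plan is to use the atomic decomposition of $H_{\pv}^s$ from Theorem~\ref{atomic decomp of s}, which lets us write any $f \in H_{\pv}^s$ as $f = \sum_{k \in \Z} \mu_k a^k$ with $(s,\pv)$-atoms $a^k$ and associated stopping times $\tau_k$, controlling the quasi-norm by the expression in \eqref{inf Hps}. The $\sigma$-sublinearity of $T$ then gives the pointwise bound
\begin{equation*}
|Tf| \leq \sum_{k \in \Z} |\mu_k| \, |Ta^k|.
\end{equation*}
So the core of the proof reduces to controlling each $|Ta^k|$ off the set where the atom ``lives''—that is, outside $\{\tau_k < \infty\}$—and then summing.

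The first main step is to exploit the localization hypothesis \eqref{T-atom_s}. Writing $A_k := \{\tau_k < \infty\} \in \mathcal{F}_{\tau_k}$, condition \eqref{T-atom_s} with $A = \Omega \setminus A_k$ should show that $Ta^k$ is supported on $A_k$ (up to the standard argument that $a^k \chi_{\{\tau_k = \infty\}} = 0$, since $a^k$ vanishes on $\{n \le \tau_k\}$). Thus $|Ta^k| = |Ta^k| \chi_{A_k}$. The second step splits the sum by a level $j \in \Z$ and uses the assumed boundedness $T : H_r^s \to L_r$: on each piece we estimate $\|Ta^k\|_r \le C\|a^k\|_{H_r^s} = C\|s(a^k)\|_r$, and the atom condition $\|s(a^k)\chi_{A_k}\|_\infty \le \|\chi_{A_k}\|_{\pv}^{-1}$ together with $|Ta^k| = |Ta^k|\chi_{A_k}$ gives a bound of the form $\|Ta^k\|_r \le C \|\chi_{A_k}\|_{\pv}^{-1} \, \P(A_k)^{1/r}$.

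The heart of the argument is then a \emph{mixed-norm summation lemma}: one must pass from the $L_r$-estimates on the individual $Ta^k$ (supported on nested-type sets $A_k$) to a single $L_{\pv}$-estimate on $\sum_k |\mu_k| |Ta^k|$, and bound the result by the right-hand side of \eqref{inf Hps}. The standard device is to choose $0 < t \le 1$, decompose $\Omega$ into the level sets, and apply Hölder's inequality (Lemma~\ref{holder}) in the mixed-norm space after raising to the power $t$; since $r > p_i$ for every $i$, the exponent $\pv/t$ pairs against a conjugate in which $r/t > (\pv/t)$ coordinatewise, so the local $L_r$-bounds can be summed against a test function $g$ with $\|g\|_{(\pv/t)'} \le 1$. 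The control $\|Ta^k\|_r \le C \|\chi_{A_k}\|_{\pv}^{-1}\P(A_k)^{1/r}$ converts, under this pairing, into exactly the atomic coefficient $\mu_k \chi_{A_k}/\|\chi_{A_k}\|_{\pv}$ appearing in \eqref{inf Hps}. Taking the infimum over all atomic decompositions then yields $\|Tf\|_{\pv} \le C\|f\|_{H_{\pv}^s}$.

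The step I expect to be the \textbf{main obstacle} is the mixed-norm summation: in the classical (constant $p$) case one simply uses $\|\cdot\|_p^{\min(p,1)}$ subadditivity to interchange the sum over $k$ with the norm, but in the mixed-norm setting the $L_{\pv}$ quasi-norm is built iteratively over the coordinates, so the passage requires the duality of Lemma~\ref{norma-sup} and a careful application of Hölder's inequality (Lemma~\ref{holder}) coordinate by coordinate, keeping track of the conjugate vector $(\pv/t)'$ and verifying that $r > p_i$ for all $i$ is precisely what makes the exponents compatible. I would isolate this as the technical crux and handle it with the duality argument for the mixed norm, closely mirroring—but carefully adapting—the scalar proof in Weisz~\cite{wk}.
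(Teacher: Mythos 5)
Your overall strategy coincides with the paper's: atomic decomposition from Theorem \ref{atomic decomp of s}, $\sigma$-sublinearity, localization of $Ta^k$ to $A_k:=\{\tau_k<\infty\}$ via \eqref{T-atom_s}, and a duality/H\"older argument against a test function $g$ with $\|g\|_{(\pv/t)'}\le 1$, with $r>p_i$ making the exponents work. However, at the step you yourself flag as the crux, the estimate you propose to feed into the pairing is too weak, and the device that actually closes the argument is missing. You record only the single global bound $\|Ta^k\|_r\le C\|\chi_{A_k}\|_{\pv}^{-1}\,\P(A_k)^{1/r}$. If you then integrate $\sum_k|\mu_k|^t|Ta^k|^t$ against $g$ and apply H\"older on each $A_k$ with exponents $r/t$ and $(r/t)'$, you are left with factors $\bigl(\int_{A_k}|g|^{(r/t)'}\,d\P\bigr)^{1/(r/t)'}$ summed over $k$, where the sets $A_k$ are nested and hence overlap massively; a single $L_r$ bound per atom carries no information about how $g$ distributes its mass over the $A_k$, so these factors cannot be reassembled into $\|g\|_{(\pv/t)'}\le 1$ times the atomic quantity in \eqref{inf Hps}. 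The ``mixed-norm summation lemma'' is therefore asserted rather than proved.

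The missing idea is to use the localization hypothesis \eqref{T-atom_s} for \emph{every} $A\in\cF_{\tau_k}$, not only for $A=A_k$: combined with $T:H_r^s\to L_r$ and the atom condition this gives $\int_A|Ta^k|^r\,d\P\le C\|\chi_{A_k}\|_{\pv}^{-r}\,\P(A)$ for all $A\in\cF_{\tau_k}$, i.e.\ the pointwise conditional bound $\E_{\tau_k}\bigl(|Ta^k|^r\bigr)\le C\|\chi_{A_k}\|_{\pv}^{-r}$. The paper inserts $\E_{\tau_k}$ into the pairing with $g$, applies the \emph{conditional} H\"older inequality $\E_{\tau_k}(|Ta^k|^tg)\le[\E_{\tau_k}(|Ta^k|^r)]^{t/r}[\E_{\tau_k}(|g|^{(r/t)'})]^{1/(r/t)'}$, majorizes $\E_{\tau_k}(|g|^{(r/t)'})$ by $M(|g|^{(r/t)'})$, and then invokes Doob's inequality (Theorem \ref{M korlatossaga}) in $L_{(\pv/t)'/(r/t)'}$; the hypothesis $r>p_i$ is used precisely to ensure $(\pv/t)'/(r/t)'>1$ so that $M$ is bounded there. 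This conditional-expectation-plus-Doob mechanism is what converts the local information about the atoms into a bound by $\|[\sum_k(\mu_k\chi_{A_k}/\|\chi_{A_k}\|_{\pv})^t]^{1/t}\|_{\pv}$, and it is the ingredient your sketch needs to supply before the proof is complete.
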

\begin{proof}
By the $\sigma$-sublinearity of $T$ and the atomic decomposition of $H_{\overrightarrow{p}}^{s}$ given in Theorem \ref{atomic decomp of s}, we have
$$
\left| Tf\right| \leq \sum_{k \in \Z} 3 \cdot 2^{k} \left\| \chi_{ \left\{\tau_{k} < \infty \right\} } \right\|_{\overrightarrow{p}} \left| T(a^k)\right|,
$$
If we choose $0 < t < \min\left\{ p_{1},\ldots,p_{d},1 \right\} \leq 1$, then
\begin{eqnarray*}
\left\|Tf\right\|_{\overrightarrow{p}} &\leq& \left\| \left[ \sum_{k \in \Z} \left( 3 \cdot 2^{k} \left\| \chi_{ \left\{\tau_{k} < \infty \right\} } \right\|_{\overrightarrow{p}} \left| T(a^k)\right| \right)^{t} \right]^{1/t} \right\|_{\overrightarrow{p}}\\
&=& \left\|  \sum_{k \in \Z} \left( 3 \cdot 2^{k} \right)^t \left\| \chi_{ \left\{\tau_{k} < \infty \right\} } \right\|_{\overrightarrow{p}}^t \left| T(a^k)\right|^t \right\|_{\overrightarrow{p}/t}^{1/t} =: Z.
\end{eqnarray*}
By Lemma \ref{norma-sup}, there exists a function $g \in L_{(\overrightarrow{p}/t)'}$ with $\|g\|_{(\overrightarrow{p}/t)'} \leq 1$, such that
$$
Z^t = \int_{\Omega} \sum_{k \in \Z} \left(3 \cdot 2^k\right)^t \left\| \chi_{ \left\{\tau_{k} < \infty \right\} } \right\|_{\overrightarrow{p}}^t \left| T(a^k)\right|^t g \, d\mathbb{P}.
$$
Since $\left\{ \tau_{k} < \infty \right\} \in \mathcal{F}_{\tau_{k}}$, using the fact that $a^k = a^k \chi_{ \left\{ \tau_{k} < \infty \right\}}$ and equation \eqref{T-atom_s}, we have $T \left(a^k \chi_{\left\{ \tau_{k} < \infty \right\}} \right) = T \left(a^k\right) \chi_{\left\{ \tau_{k} < \infty \right\}}$. Since $t < r$, the previous expression can be estimated by H\"older's inequality
\begin{eqnarray}\label{formula}
Z^t &\leq& \sum_{k \in \Z} \left(3 \cdot 2^k\right)^t \left\| \chi_{ \left\{\tau_{k} < \infty \right\} } \right\|_{\overrightarrow{p}}^t \int_{\Omega} \chi_{ \left\{ \tau_{k} < \infty \right\}} \mathbb{E}_{\tau_{k}} \left( \left| T(a^k) \right|^t g \right) \, d\mathbb{P} \n\\
&\leq& c \sum_{k \in \Z} \left(3 \cdot 2^k\right)^t \left\| \chi_{ \left\{\tau_{k} < \infty \right\} } \right\|_{\overrightarrow{p}}^t \int_{\Omega} \chi_{ \left\{ \tau_{k} < \infty \right\}} \n\\
& & \left[ \mathbb{E}_{\tau_{k}} \left( \left| T(a^k)\right|^r \right) \right]^{t/r} \left[ \mathbb{E}_{\tau_{k}} \left( \left| g\right|^{(r/t)'} \right) \right]^{1/(r/t)'} \, d\mathbb{P}.
\end{eqnarray}
Here, by the boundedness of $T$ and by the fact that $a^k$ is an $(s,\overrightarrow{p})$-atom, we get
\begin{eqnarray*}
\int_{A} |T(a^{k})|^r \, d\P &=& \int_{\Omega} |T(a^{k} \chi_{A})|^r  \, d\P  \leq \int_{\Omega} |s(a^{k} \chi_{A})|^r \, d\P \\
&\leq& \int_{A} |s(a^{k})|^r \, d\P \leq \|\chi_{\{ \tau_{k} < \infty \} }\|_{\pv}^{-r} \, \P(A),
\end{eqnarray*}
where $A \in \cF_{\tau_{k}}$. This implies that

\begin{eqnarray*}
\left[ \mathbb{E}_{\tau_{k}} \left( \left| T(a^k)\right|^r \right) \right]^{t/r} \leq \left\| \chi_{ \left\{ \tau_{k} < \infty \right\}} \right\|_{\overrightarrow{p}}^{-t}.
\end{eqnarray*}
Hence, \eqref{formula} can be estimated by
\begin{eqnarray*}
Z^t &\leq& c \int \sum_{k \in \Z} \left(3 \cdot 2^k\right)^t \chi_{ \left\{ \tau_{k} < \infty \right\}} \left[ \mathbb{E}_{\tau_{k}} \left( \left| g\right|^{(r/t)'} \right) \right]^{1/(r/t)'} \, d\mathbb{P} \\
&\leq& c \int \sum_{k \in \Z} \left(3 \cdot 2^k\right)^t \chi_{ \left\{ \tau_{k} < \infty \right\}} \left[ M \left( \left| g\right|^{(r/t)'} \right) \right]^{1/(r/t)'} \, d\mathbb{P}\\
&\leq& c \left\| \sum_{k \in \Z} \left(3 \cdot 2^k\right)^t \chi_{ \left\{ \tau_{k} < \infty \right\}} \right\|_{\overrightarrow{p}/t} \left\| \left[ M \left( \left| g\right|^{(r/t)'} \right) \right]^{1/(r/t)'} \right\|_{(\overrightarrow{p}/t)'}.
\end{eqnarray*}
Since $\pv < r$, therefore $(\pv/t)'/(r/t)' > 1$, so by the boundedness of $M$ (see Theorem \ref{M korlatossaga}), we obtain that
\begin{eqnarray*}
\left\| \left[ M \left( \left| g\right|^{(r/t)'} \right) \right]^{1/(r/t)'} \right\|_{(\overrightarrow{p}/t)'} &=& \left\| M \left( \left| g\right|^{(r/t)'} \right) \right\|_{(\overrightarrow{p}/t)' /(r/t)'}^{1/(r/t)'} \leq c \left\| \left| g\right|^{(r/t)'} \right\|_{(\overrightarrow{p}/t)' /(r/t)'}^{1/(r/t)'} \\
&=& c \left\|g \right\|_{(\overrightarrow{p}/t)'} \leq c.
\end{eqnarray*}
By \eqref{inf Hps}, we have
\begin{eqnarray*}
\left\|Tf\right\|_{\overrightarrow{p}} \leq c \left\| \sum_{k \in \Z} \left(3 \cdot 2\right)^t \chi_{ \left\{ \tau_{k} < \infty \right\}} \right\|_{\overrightarrow{p}/t}^{1/t}= c \left\| \left[ \sum_{k \in \Z} \left( \frac{ \mu_k \chi_{ \left\{ \tau_{k} < \infty \right\}}}{\left\| \chi_{ \left\{\tau_{k} < \infty \right\}} \right\|_{\overrightarrow{p}}} \right)^t  \right]^{1/t} \right\|_{\overrightarrow{p}} \leq c \left\| f \right\|_{H_{\overrightarrow{p}}^s}
\end{eqnarray*}
and we get that $T$ is bounded from $H_{\overrightarrow{p}}^s$ to $L_{\overrightarrow{p}}$.
\end{proof}

The following theorem can be proved similarly.

\begin{thm}\label{T korl HpS-bol es HpM-bol}
Let $0 < \pv < \infty$ and suppose that the $\sigma$-sublinear operator $T: \mathcal{Q}_{r} \to L_{r}$ (resp. $T: \mathcal{P}_{r} \to L_{r}$) is bounded, where $\pv=(p_{1},\ldots,p_{d})$ and $r > p_{i}$ $(i=1,\ldots,d)$. If all $(S,\overrightarrow{p})$-atoms (resp. $(M,\overrightarrow{p})$-atoms) $a$ satisfy \eqref{T-atom_s}, then for all $f \in \mathcal{Q}_{\overrightarrow{p}}$ (resp. $f \in \mathcal{P}_{\overrightarrow{p}}$),
$$
\left\| Tf \right\|_{\overrightarrow{p}} \leq C \left\|f\right\|_{\mathcal{Q}_{\overrightarrow{p}}} \quad \left(\mbox{resp.} \quad \left\|f\right\|_{\mathcal{P}_{\overrightarrow{p}}} \right).
$$
\end{thm}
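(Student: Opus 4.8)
The plan is to follow the proof of Theorem~\ref{T korl Hps-bol} almost verbatim, replacing the atomic decomposition of $H_{\pv}^{s}$ by the one for $\cQ_{\pv}$ (resp.\ $\cP_{\pv}$) furnished by Theorem~\ref{atomos-felb-M}. I treat the case $\cQ_{\pv}$; the case $\cP_{\pv}$ is identical with $S$ replaced by $M$ throughout. First I decompose $f=\sum_{k\in\Z}\mu_k a^k$ into $(S,\pv)$-atoms with $\mu_k=3\cdot 2^k\|\chi_{\{\tau_k<\infty\}}\|_{\pv}$, and use the $\sigma$-sublinearity of $T$ to write $|Tf|\leq\sum_{k\in\Z} 3\cdot 2^k\|\chi_{\{\tau_k<\infty\}}\|_{\pv}\,|T(a^k)|$. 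Choosing $0<t<\min\{p_1,\dots,p_d,1\}$, I bound $\|Tf\|_{\pv}$ by $Z:=\big\|\sum_{k\in\Z} (3\cdot 2^k)^t\|\chi_{\{\tau_k<\infty\}}\|_{\pv}^t\,|T(a^k)|^t\big\|_{\pv/t}^{1/t}$ and apply the duality Lemma~\ref{norma-sup} to produce $g$ with $\|g\|_{(\pv/t)'}\leq 1$ for which $Z^t$ equals the corresponding integral against $g$.

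The only step that genuinely differs from Theorem~\ref{T korl Hps-bol} is the atomic estimate. There the bound $\int_A|T(a^k)|^r\,d\P\leq\|\chi_{\{\tau_k<\infty\}}\|_{\pv}^{-r}\P(A)$ came from the boundedness $T:H_r^s\to L_r$ together with control of $s(a^k)$; here I instead invoke the hypothesis $T:\cQ_r\to L_r$. For $A\in\cF_{\tau_k}$ with $A\subseteq\{\tau_k<\infty\}$, the martingale differences of $a^k\chi_A$ coincide with $\chi_A\,d_n a^k$, so that $S(a^k\chi_A)=\chi_A S(a^k)\leq\chi_A\|\chi_{\{\tau_k<\infty\}}\|_{\pv}^{-1}$ by the $(S,\pv)$-atom property, and moreover $S_n(a^k\chi_A)$ is supported on $\{\tau_k\leq n-1\}$. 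Hence the envelope $\lambda_n:=\|\chi_{\{\tau_k<\infty\}}\|_{\pv}^{-1}\chi_{A\cap\{\tau_k\leq n\}}$ is admissible in the definition of the $\cQ_r$-norm, giving $\|a^k\chi_A\|_{\cQ_r}\leq\|\chi_{\{\tau_k<\infty\}}\|_{\pv}^{-1}\P(A)^{1/r}$. Combining this with \eqref{T-atom_s} and the boundedness of $T$ yields $\int_A|T(a^k)|^r\,d\P=\int_\Omega|T(a^k\chi_A)|^r\,d\P\leq C\|\chi_{\{\tau_k<\infty\}}\|_{\pv}^{-r}\P(A)$, and therefore $[\E_{\tau_k}(|T(a^k)|^r)]^{t/r}\leq C\|\chi_{\{\tau_k<\infty\}}\|_{\pv}^{-t}$, exactly as in the previous theorem.

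With this estimate in hand the remainder is a literal repetition of Theorem~\ref{T korl Hps-bol}: I insert $\E_{\tau_k}$ via the identity $T(a^k\chi_{\{\tau_k<\infty\}})=T(a^k)\chi_{\{\tau_k<\infty\}}$, apply Hölder's inequality inside the conditional expectation with exponents $r/t$ and $(r/t)'$, and then invoke Doob's inequality (Theorem~\ref{M korlatossaga}) on $L_{(\pv/t)'}$—legitimate since $\pv<r$ forces $(\pv/t)'/(r/t)'>1$—to control the maximal function of $|g|^{(r/t)'}$. Collapsing $\big\|\sum_{k\in\Z}(3\cdot 2^k)^t\chi_{\{\tau_k<\infty\}}\big\|_{\pv/t}^{1/t}$ back into the atomic expression and using the norm characterization from Theorem~\ref{atomos-felb-M} gives $\|Tf\|_{\pv}\leq C\|f\|_{\cQ_{\pv}}$; the $\cP_{\pv}$ case runs identically with $M(a^k)$ and the $(M,\pv)$-atom property. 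The one point requiring a little care is the atom estimate of the second paragraph—namely checking that the dominating sequence $(\lambda_n)$ is genuinely nonnegative, nondecreasing, and adapted (the latter because $A\cap\{\tau_k\leq n\}\in\cF_n$ whenever $A\in\cF_{\tau_k}$), so that it legitimately bounds the $\cQ_r$- (resp.\ $\cP_r$-) norm; once this is verified, everything else transfers verbatim from Theorem~\ref{T korl Hps-bol}.
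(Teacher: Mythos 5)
Your proposal is correct and follows exactly the route the paper intends: the paper itself only remarks that Theorem~\ref{T korl HpS-bol es HpM-bol} ``can be proved similarly'' to Theorem~\ref{T korl Hps-bol}, and your adaptation — replacing the $H_{\pv}^{s}$-decomposition by that of Theorem~\ref{atomos-felb-M} and deriving the atom estimate $\int_{A}|T(a^{k})|^{r}\,d\P\leq C\|\chi_{\{\tau_{k}<\infty\}}\|_{\pv}^{-r}\P(A)$ from the $\cQ_{r}$- (resp.\ $\cP_{r}$-) boundedness via the explicit admissible envelope $\lambda_{n}$ — is precisely the intended modification, carried out correctly.
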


It is easy to see that for all $(s,\overrightarrow{p})$-atoms $a$, $(S,\pv)$-atoms $a$ or $(M,\pv)$-atoms $a$ and $A \in \mathcal{F}_{\tau}$, $s(a \chi_{A}) = s(a) \chi_{A}$, $S(a \chi_{A}) = S(a) \chi_{A}$ and $M(a \chi_{A}) = M(a) \chi_{A}$. This means that the operators $s$, $S$ and $M$ satisfy condition \eqref{T-atom_s}.

Let $f \in H_{\overrightarrow{p}}^{s}$. The $\sigma$-sublinear operator $M$ is bounded from $H_{2}^{s}$ to $L_{2}$ (see e.g. Weisz \cite{wk}), that is $\left\|Mf\right\|_{2} \leq c \left\|f\right\|_{H_{2}^s}$. So we can apply Theorem \ref{T korl Hps-bol} with the choice $r=2$ and $\overrightarrow{p} := \left(p_{1},\ldots,p_{d}\right)$, where $p_{i} < 2$ and we get that
\begin{equation}\label{mart-ineq-1}
\|f\|_{H_{\overrightarrow{p}}^M} = \left\|M(f)\right\|_{\overrightarrow{p}} \leq c \left\|f\right\|_{H_{\overrightarrow{p}}^s} \quad \left( 0 < \overrightarrow{p} < 2 \right).
\end{equation}

The operator $S$ is also bounded from $H_{2}^s$ to $L_{2}$ (see \cite{wk}), hence using Theorem \ref{T korl Hps-bol} we obtain
\begin{equation}\label{mart-ineq-2}
\left\| f \right\|_{H_{\overrightarrow{p}}^S} \leq c \left\| f \right\|_{H_{\overrightarrow{p}}^s} \quad \left( 0 < \overrightarrow{p} < 2 \right).
\end{equation}

From the definition of the Hardy spaces it follows immediately that
\begin{equation}\label{mart-ineq-3}
\left\|f\right\|_{H_{\overrightarrow{p}}^M} \leq \left\|f\right\|_{\mathcal{P}_{\overrightarrow{p}}}, \quad \left\|f\right\|_{H_{\overrightarrow{p}}^S} \leq \left\|f\right\|_{\mathcal{Q}_{\overrightarrow{p}}} \quad \left(0 < \overrightarrow{p} < \infty \right).
\end{equation}

By the Burkholder-Gundy and Doob's inequality, for all $1 < r < \infty$, $\| S(f) \|_{r} \approx \left\|M(f)\right\|_{r} \approx \left\|f\right\|_{r}$ (see Weisz \cite{wk}). Using this, inequality \eqref{mart-ineq-3} and Theorem \ref{T korl HpS-bol es HpM-bol}, we have
\begin{equation}\label{mart-ineq-4}
\left\|f\right\|_{H_{\overrightarrow{p}}^S} \leq c \left\|f\right\|_{\mathcal{P}_{\overrightarrow{p}}} \quad \mbox{and} \quad \left\|f\right\|_{H_{\overrightarrow{p}}^M} \leq c \left\|f\right\|_{\mathcal{Q}_{\overrightarrow{p}}} \quad \left( 0 < \overrightarrow{p} < \infty \right).
\end{equation}

For $f = \left(f_{n}\right)_{n \in \N} \in \mathcal{Q}_{\overrightarrow{p}}$ there exists a sequence $(\lambda_{n})_{n \in \N}$ for which $S_{n}(f) \leq \lambda_{n-1}$ and $\lambda_{\infty} \in L_{\overrightarrow{p}}$. Using the inequality $|f_{n}| \leq M_{n-1}(f) + \lambda_{n-1}$ and the second inequality in \eqref{mart-ineq-4}, we get that
\begin{equation}\label{mart-ineq-5}
\left\|f\right\|_{\mathcal{P}_{\overrightarrow{p}}} \leq \left\|M(f)\right\|_{\overrightarrow{p}} + \left\|\lambda_{\infty}\right\|_{\overrightarrow{p}} \leq \left\|f\right\|_{H_{\overrightarrow{p}}^{M}} + c \, \left\|f\right\|_{\mathcal{Q}_{\overrightarrow{p}}} \leq c \left\|f\right\|_{\mathcal{Q}_{\overrightarrow{p}}}.
\end{equation}

Similarly, if $f = \left(f_{n}\right)_{n \in \N} \in \mathcal{P}_{\overrightarrow{p}}$, then $|f_{n}| \leq \lambda_{n-1}$ with a suitable sequence $(\lambda_{n})_{n \in \N}$ for which $\lambda_{\infty} \in L_{\overrightarrow{p}}$. Since
\begin{eqnarray*}
S_{n}(f) &=& \left( \sum_{k=0}^{n} \left| d_{k}f \right|^2 \right)^{1/2} \leq S_{n-1}(f) + |d_{n}f|  \leq S_{n-1}(f) + 2 \lambda_{n-1},
\end{eqnarray*}
using the first inequality in \eqref{mart-ineq-4}, we have that
\begin{equation}\label{mart-ineq-6}
\left\|f\right\|_{\mathcal{Q}_{\overrightarrow{p}}} \leq \left\|S(f)\right\|_{\overrightarrow{p}} + 2 \left\|\lambda_{\infty}\right\|_{\overrightarrow{p}} = \left\|f\right\|_{H_{\overrightarrow{p}}^S} + 2 \left\|f\right\|_{\mathcal{P}_{\overrightarrow{p}}} \leq c \left\|f\right\|_{\mathcal{P}_{\overrightarrow{p}}}
\end{equation}
for all $0 < \pv < \infty$.

From \cite{wk} Proposition 2.11 (ii), we get that the operator $s$ is bounded from $H_{r}^M$ to $L_r$ and from $H_{r}^S$ to $L_r$ if $2 \leq r < \infty$. Again, using Theorem \ref{T korl HpS-bol es HpM-bol}, we obtain
\begin{equation}\label{mart-ineq-7}
\left\|f\right\|_{H_{\overrightarrow{p}}^s} \leq c \left\|f\right\|_{\mathcal{P}_{\overrightarrow{p}}} \quad \mbox{and} \quad \left\|f\right\|_{H_{\overrightarrow{p}}^{s}} \leq c \left\|f\right\|_{\mathcal{Q}_{\overrightarrow{p}}} \quad \left(0 < \overrightarrow{p} < \infty \right).
\end{equation}

The inequalities \eqref{mart-ineq-1}, \eqref{mart-ineq-2}, \eqref{mart-ineq-3}, \eqref{mart-ineq-4}, \eqref{mart-ineq-5}, \eqref{mart-ineq-6} and \eqref{mart-ineq-7} are collected in the following corollary.

\begin{cor}
We have the following martingale inequalities:
\begin{enumerate}
\item
$$
\left\|f\right\|_{H_{\overrightarrow{p}}^M} \leq c \left\|f\right\|_{H_{\overrightarrow{p}}^s}, \quad \left\| f \right\|_{H_{\overrightarrow{p}}^S} \leq c \left\| f \right\|_{H_{\overrightarrow{p}}^s} \quad \left( 0 < \overrightarrow{p} < 2 \right).
$$
\item
$$
\left\|f\right\|_{H_{\overrightarrow{p}}^M} \leq \left\|f\right\|_{\mathcal{P}_{\overrightarrow{p}}}, \quad \left\|f\right\|_{H_{\overrightarrow{p}}^S} \leq \left\|f\right\|_{\mathcal{Q}_{\overrightarrow{p}}} \quad \left(0 < \overrightarrow{p} < \infty \right).
$$
\item
\begin{equation*}\label{eq400}
\left\|f\right\|_{H_{\overrightarrow{p}}^S} \leq c \left\|f\right\|_{\mathcal{P}_{\overrightarrow{p}}}, \quad  \left\|f\right\|_{H_{\overrightarrow{p}}^M} \leq c \left\|f\right\|_{\mathcal{Q}_{\overrightarrow{p}}} \quad \left( 0 < \overrightarrow{p} < \infty \right).
\end{equation*}
\item
\begin{equation*}\label{eq130}
\left\|f\right\|_{\mathcal{P}_{\overrightarrow{p}}} \leq c \left\|f\right\|_{\mathcal{Q}_{\overrightarrow{p}}}, \quad \left\|f\right\|_{\mathcal{Q}_{\overrightarrow{p}}} \leq c \left\|f\right\|_{\mathcal{P}_{\overrightarrow{p}}} \quad \left(0 < \overrightarrow{p} < \infty \right).
\end{equation*}
\item
\begin{equation}\label{s-pp-s-qp}
\left\|f\right\|_{H_{\overrightarrow{p}}^s} \leq c \left\|f\right\|_{\mathcal{P}_{\overrightarrow{p}}} \quad \mbox{and} \quad \left\|f\right\|_{H_{\overrightarrow{p}}^{s}} \leq c \left\|f\right\|_{\mathcal{Q}_{\overrightarrow{p}}} \quad \left(0 < \overrightarrow{p} < \infty \right).
\end{equation}
\end{enumerate}
\end{cor}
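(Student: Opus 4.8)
The plan is to prove the five families of inequalities one at a time; the corollary is then simply their collection. The single engine behind everything is the pair of abstract boundedness results, Theorems \ref{T korl Hps-bol} and \ref{T korl HpS-bol es HpM-bol}, applied to the concrete $\sigma$-sublinear operators $M$, $S$ and $s$. Before invoking them I would record that these three operators satisfy the commutation hypothesis \eqref{T-atom_s}, since $M(a\chi_A)=M(a)\chi_A$, $S(a\chi_A)=S(a)\chi_A$ and $s(a\chi_A)=s(a)\chi_A$ for every $A\in\cF_\tau$; this is exactly what licenses each application below.

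For item (i) I would feed the classical scalar estimates ``$M$ and $S$ are bounded from $H_2^s$ to $L_2$'' into Theorem \ref{T korl Hps-bol} with $r=2$. Since $0<\pv<2$ guarantees $r>p_i$ for every $i$, this yields $\|f\|_{\HpvM}=\|M(f)\|_{\pv}\le c\|f\|_{\Hpvs}$ and the analogous bound for $S$, which are \eqref{mart-ineq-1} and \eqref{mart-ineq-2}. Item (ii) needs no machinery: if $|f_n|\le\lambda_{n-1}$ with $\lambda_\infty\in L_{\pv}$ then $M(f)\le\lambda_\infty$ pointwise, so $\|f\|_{\HpvM}\le\|f\|_{\Ppv}$, and the $S$/$\Qpv$ statement is identical; this is \eqref{mart-ineq-3}.

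Items (iii) and (v) are two further instances of the same idea, now through Theorem \ref{T korl HpS-bol es HpM-bol}. For (iii) the Burkholder--Gundy and Doob inequalities give $\|S(f)\|_r\approx\|M(f)\|_r\approx\|f\|_r$ for $1<r<\infty$; combined with (ii) this shows that $S$ is bounded from $\cP_r$ to $L_r$ and $M$ from $\cQ_r$ to $L_r$, and the boundedness theorem transfers these to $\|f\|_{\HpvS}\le c\|f\|_{\Ppv}$ and $\|f\|_{\HpvM}\le c\|f\|_{\Qpv}$, i.e.\ \eqref{mart-ineq-4}. For (v) I would instead use that $s$ is bounded from $H_r^M$ and from $H_r^S$ to $L_r$ for $2\le r<\infty$; one application of Theorem \ref{T korl HpS-bol es HpM-bol} then gives \eqref{mart-ineq-7}. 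Item (iv) is elementary: from $|f_n|\le M_{n-1}(f)+\lambda_{n-1}$ and $S_n(f)\le S_{n-1}(f)+2\lambda_{n-1}$ one builds dominating sequences in $\Lambda$, and combining with (iii) yields $\|f\|_{\Ppv}\le c\|f\|_{\Qpv}$ and $\|f\|_{\Qpv}\le c\|f\|_{\Ppv}$, which are \eqref{mart-ineq-5} and \eqref{mart-ineq-6}.

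The main obstacle is the bookkeeping of the auxiliary exponent $r$ rather than any individual estimate. Each call to Theorems \ref{T korl Hps-bol} and \ref{T korl HpS-bol es HpM-bol} demands an $r$ with $r>p_i$ for all $i$, and this same $r$ must lie in the range where the underlying scalar operator is bounded: $r=2$ for $M$ and $S$ coming out of $H^s$ (which forces $0<\pv<2$ in (i)), but any $2\le r<\infty$ for $s$ coming out of $H^M$ and $H^S$ (available for every finite $\pv$, giving the full range $0<\pv<\infty$ in (iii)--(v)). Once one checks that such an $r$ exists in each case and that \eqref{T-atom_s} holds for the operator in play, the five items are precisely the already-derived inequalities \eqref{mart-ineq-1}--\eqref{mart-ineq-7}.
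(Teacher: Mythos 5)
Your proposal is correct and follows essentially the same route as the paper: each item is obtained by feeding the classical scalar boundedness of $M$, $S$ and $s$ (together with the commutation property \eqref{T-atom_s}) into Theorems \ref{T korl Hps-bol} and \ref{T korl HpS-bol es HpM-bol} with a suitable auxiliary exponent $r>\max_i p_i$, while items (ii) and (iv) follow from the definitions and the pointwise bounds $|f_n|\le M_{n-1}(f)+\lambda_{n-1}$ and $S_n(f)\le S_{n-1}(f)+2\lambda_{n-1}$. This matches the paper's derivation of \eqref{mart-ineq-1}--\eqref{mart-ineq-7}, of which the corollary is simply the collection.
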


\begin{thm}\label{t3}
    If the stochastic basis $(\cF_{n})$ is regular, then the five Hardy spaces are equivalent, that is
    $$
    H_{\pv}^S = \cQ_{\pv} = \cP_{\pv} = H_{\pv}^M = H_{\pv}^s \qquad \left(0 < \pv < \infty \right)
    $$
    with equivalent quasi-norms.
\end{thm}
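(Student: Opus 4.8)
The plan is to reduce the five-fold equivalence to a single new inequality, namely $\|f\|_{\HpvM}\le c\,\|f\|_{\Hpvs}$ valid for \emph{all} $0<\pv<\infty$, and then to assemble the remaining relations from the results already at hand.

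First I would record what is free of charge. The inequalities \eqref{mart-ineq-5} and \eqref{mart-ineq-6} give $\Ppv=\Qpv$ with equivalent quasi-norms for every $0<\pv<\infty$, with no regularity assumption. Under the regularity hypothesis, Corollary \ref{ekvivalens} supplies $\HpvS=\Qpv$ and $\HpvM=\Ppv$. Chaining these,
$$
\HpvS=\Qpv=\Ppv=\HpvM,
$$
so four of the five spaces already coincide (with equivalent quasi-norms) for all $0<\pv<\infty$. Moreover \eqref{mart-ineq-7} gives one half of the remaining comparison, $\|f\|_{\Hpvs}\le c\,\|f\|_{\Ppv}$, that is $\Ppv\subseteq\Hpvs$. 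Thus the whole theorem comes down to the reverse inclusion $\Hpvs\subseteq\Ppv$, equivalently $\|f\|_{\Ppv}\le c\,\|f\|_{\Hpvs}$.

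The main obstacle is exactly this reverse inclusion for exponents that are not strictly below $2$: the martingale inequality \eqref{mart-ineq-1} yields $\|f\|_{\HpvM}\le c\,\|f\|_{\Hpvs}$ only for $0<\pv<2$, and for $\pv\ge 2$ the space $\Hpvs$ is genuinely larger than $\HpvM$ in the absence of regularity. To overcome this I would invoke regularity at the scalar level. For a regular stochastic basis the classical one-dimensional theory (Weisz \cite{wk}) gives $\|M(f)\|_{r}\sim\|s(f)\|_{r}$ for \emph{every} $1<r<\infty$; in particular the $\sigma$-sublinear operator $M$ is bounded from $H_{r}^{s}$ to $L_{r}$ for all such $r$. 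Since $M$ satisfies the atom condition \eqref{T-atom_s} (because $M(a\chi_{A})=M(a)\chi_{A}$ for $A\in\cF_{\tau}$, as noted after Theorem \ref{T korl HpS-bol es HpM-bol}), I can fix any $r\ge 2$ with $r>\max\{p_{1},\dots,p_{d}\}$ and apply Theorem \ref{T korl Hps-bol} to transfer this scalar bound to the mixed norm, obtaining
$$
\|f\|_{\HpvM}=\|M(f)\|_{\pv}\le c\,\|f\|_{\Hpvs}\qquad(0<\pv<\infty).
$$
This is the desired extension of \eqref{mart-ineq-1} to all exponents, and it is the only place where regularity is used beyond Corollary \ref{ekvivalens}.

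Finally I would close the loop. Combining the displayed bound with $\HpvM=\Ppv$ (Corollary \ref{ekvivalens}) gives $\|f\|_{\Ppv}\le c\,\|f\|_{\Hpvs}$, i.e. $\Hpvs\subseteq\Ppv$; together with $\Ppv\subseteq\Hpvs$ from \eqref{mart-ineq-7} this yields $\Hpvs=\Ppv$ with equivalent quasi-norms. Merging with the earlier chain $\HpvS=\Qpv=\Ppv=\HpvM$ proves
$$
\HpvS=\Qpv=\Ppv=\HpvM=\Hpvs\qquad(0<\pv<\infty),
$$
which is the assertion. The one delicate point to verify is that the scalar equivalence $\|M(f)\|_r\sim\|s(f)\|_r$ for regular bases is legitimately available here; but $(\cF_{n})$ is an ordinary filtration, $L_{(r,\dots,r)}=L_{r}$, and regularity is precisely the classical notion, so the classical theorem applies verbatim. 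One could equally run the argument with $T=S$ in place of $T=M$, since $\|S(f)\|_r\sim\|s(f)\|_r$ under regularity and $S$ also satisfies \eqref{T-atom_s}.
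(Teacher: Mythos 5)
Your proof is correct, but the decisive step is handled quite differently from the paper. Both arguments agree on the easy part: Corollary \ref{ekvivalens} gives $\HpvS=\Qpv$ and $\HpvM=\Ppv$ under regularity, and \eqref{mart-ineq-5}--\eqref{mart-ineq-6} give $\Ppv=\Qpv$, so everything reduces to attaching $\Hpvs$ to this chain, with one direction already supplied by \eqref{mart-ineq-7}. For the missing direction you extrapolate: you take the scalar fact that under regularity $M$ is bounded from $H_{r}^{s}$ to $L_{r}$ for every large $r$ (Weisz \cite{wk}), check the atom condition \eqref{T-atom_s} for $M$, and feed this into Theorem \ref{T korl Hps-bol} with $r>\max\{2,p_{1},\dots,p_{d}\}$ to get $\|f\|_{\HpvM}\le c\,\|f\|_{\Hpvs}$ for all $0<\pv<\infty$; this is legitimate, since the product filtration is an ordinary filtration and the classical regular theory applies verbatim. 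The paper instead closes the gap at $\Qpv$ rather than at $\HpvM$, and does so pointwise: regularity gives $S_{n}(f)\le R^{1/2}s_{n}(f)$, and since $s_{n}(f)$ is $\cF_{n-1}$-measurable the sequence $\lambda_{n-1}:=R^{1/2}s_{n}(f)$ is an admissible control sequence in the definition of $\Qpv$, whence $\|f\|_{\Qpv}\le R^{1/2}\|f\|_{\Hpvs}$ in two lines, with no recourse to atomic decompositions or to the scalar regular theory. What your route buys is uniformity of method (everything runs through the single extrapolation theorem, and the same argument with $T=S$ would work); what the paper's route buys is economy and self-containedness, since it needs only the elementary pointwise consequence of regularity. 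If you keep your version, state precisely which result of \cite{wk} gives $\|M(f)\|_{r}\le c\,\|s(f)\|_{r}$ for regular bases and all $1<r<\infty$, since for non-regular filtrations this fails for $r>2$ and it is the only external input your argument relies on.
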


\begin{proof}
We know (see e.g. Weisz \cite{wk}) that $S_{n}(f) \leq R^{1/2} s_{n}(f)$ and from this, it follows that $\|f\|_{H_{\pv}^S} \leq C \|f\|_{H_{\pv}^s}$. Using the definition of $\cQ_{\pv}$ and the fact that $s_{n}(f) \in \cF_{n-1}$ we get
$$
\|f\|_{\cQ_{\pv}} \leq C \|s(f)\|_{\pv} = C \|f\|_{H_{\pv}^s}.
$$
By inequalities \eqref{s-pp-s-qp}, we obtain that $\cQ_{\pv} = H_{\pv}^s$. Since the stochastic basis $(\cF_{n})$ is regular, the theorem follows from Corollary \ref{ekvivalens} and from \eqref{mart-ineq-6}.
\end{proof}

\begin{thm}\label{vect-ineq}
Suppose that $1 < \pv < \infty$, or
\begin{equation}\label{eq71}
\pv=(1,\ldots,1,p_{k+1},\ldots,p_{d}), \quad 1 < p_{k+1}, \ldots, p_{d} < \infty
\end{equation}
for some $k \in \{1, \ldots, d\}$. Then for all $(f_{n})_{n \in \N}$ non-negative, measurable function sequence
$$
\left\| \sum_{n \in \N} \mathbb{E}_{n} \left(f_{n}\right) \right\|_{\pv} \leq c \left\| \sum_{n \in \N} f_{n} \right\|_{\pv}.
$$
\end{thm}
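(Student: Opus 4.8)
The plan is to argue by duality, reducing the vector-valued estimate to the boundedness of the maximal operator $M$ (Theorem~\ref{M korlatossaga}) applied not to $\pv$ but to the conjugate exponent $\vpc$. Since each $f_n \geq 0$ and conditional expectations preserve non-negativity, the function $\sum_{n} \E_n f_n$ is non-negative, so by Lemma~\ref{norma-sup},
$$
\left\| \sum_{n \in \N} \E_n f_n \right\|_{\pv} = \sup_{\|g\|_{\vpc} \leq 1} \int_{\Omega} \Big( \sum_{n \in \N} \E_n f_n \Big) g \, d\P,
$$
and here we may restrict the supremum to $g \geq 0$, because replacing $g$ by $|g|$ does not decrease the integral and does not change $\|g\|_{\vpc}$.

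Next I would move the conditional expectations onto $g$. Since $\E_n(f_n)$ is $\cF_n$-measurable, the self-adjointness of $\E_n$ gives $\int_\Omega \E_n(f_n)\, g \, d\P = \int_\Omega f_n\, \E_n g \, d\P$ for each $n$, whence
$$
\int_{\Omega} \Big( \sum_{n \in \N} \E_n f_n \Big) g \, d\P = \sum_{n \in \N} \int_{\Omega} f_n\, \E_n g \, d\P.
$$
Because $g \geq 0$ we have $\E_n g \leq Mg$ for every $n$, so, applying H\"older's inequality (Lemma~\ref{holder}),
$$
\sum_{n \in \N} \int_{\Omega} f_n\, \E_n g \, d\P \leq \int_{\Omega} \Big( \sum_{n \in \N} f_n \Big) Mg \, d\P \leq \left\| \sum_{n \in \N} f_n \right\|_{\pv} \left\| Mg \right\|_{\vpc}.
$$

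The final step is to apply Doob's inequality to $\vpc$, and this is exactly where the hypothesis on $\pv$ is used: the assumptions \eqref{eq71} dualize precisely to the cases covered by Theorem~\ref{M korlatossaga}. Indeed, if $1 < \pv < \infty$ then $1 < \vpc < \infty$; and if $\pv = (1,\ldots,1,p_{k+1},\ldots,p_d)$ with $1 < p_{k+1},\ldots,p_d < \infty$, then $\vpc = (\infty,\ldots,\infty,p_{k+1}',\ldots,p_d')$ with $1 < p_i' < \infty$, which is precisely the form \eqref{eq52}. In either case Theorem~\ref{M korlatossaga} yields $\|Mg\|_{\vpc} \leq c\,\|g\|_{\vpc} \leq c$, and combining this with the chain of estimates and taking the supremum over $g$ proves the claim.

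The proof is short because all the analytic work is hidden in Theorem~\ref{M korlatossaga}; the only point demanding care is the very reason for the precise hypothesis. The maximal operator is \emph{not} bounded when a genuine $p_i = \infty$ coordinate is mixed with finite ones in the wrong way (cf.\ the counterexample on $L_{(p,\infty)}$), so one must check that the $\infty$-coordinates produced in $\vpc$ sit exactly in the admissible configuration \eqref{eq52}, where the first coordinates are $\infty$ and the remaining ones lie strictly between $1$ and $\infty$. Verifying that \eqref{eq71} transforms into \eqref{eq52} under conjugation is therefore the crux; the measure-theoretic interchange moving $\E_n$ across the pairing, and the reduction to $g \geq 0$, are routine.
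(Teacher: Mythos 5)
Your proof is correct and follows essentially the same route as the paper: dualize via Lemma~\ref{norma-sup}, move $\E_n$ onto $g$, dominate $\E_n g$ by $Mg$, apply H\"older, and invoke Theorem~\ref{M korlatossaga} for the conjugate exponent. Your explicit verification that the conjugate of \eqref{eq71} lands in the admissible configuration \eqref{eq52} is a point the paper leaves implicit, and is indeed the reason the hypothesis is stated as it is.
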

\begin{proof}
From Lemma \ref{norma-sup}, we know that there exists a function $g \in L_{\vpc}$ with $\|g\|_{\vpc} \leq 1$ such that
$$
\left\| \sum_{n \in \N} \mathbb{E}_{n} \left(f_{n}\right) \right\|_{\pv} = \int_{\Omega} \sum_{n \in \N} \mathbb{E}_{n} \left(f_{n}\right) g \, d\mathbb{P}.
$$
Since $\mathbb{E}_{n} \left(f_{n}\right)$ is $\mathcal{F}_{n}$-measurable, we obtain
\begin{eqnarray*}\label{holder elott}
\int_{\Omega} \sum_{n \in \N} \mathbb{E}_{n} \left(f_{n}\right) g \, d\mathbb{P} \leq \sum_{n \in \N} \int_{\Omega}  f_{n} M(g) \, d\mathbb{P} = \int_{\Omega}  \sum_{n \in \N} f_{n} M(g) \, d\mathbb{P}.
\end{eqnarray*}
Using H\"older's inequality and Theorem \ref{M korlatossaga}, we have
\begin{eqnarray*}
\left\| \sum_{n \in \N} \E_{n}(f_{n}) \right\|_{\pv} \leq c \left\| \sum_{n \in \N} f_{n} \right\|_{\pv} \left\| Mg \right\|_{\vpc} \leq c \, \left\| \sum_{n \in \N} f_{n} \right\|_{\pv} \left\|g\right\|_{\vpc} \leq c \, \left\| \sum_{n \in \N} f_{n} \right\|_{\pv}
\end{eqnarray*}
and the proof is complete.
\end{proof}

As an application of the previous theorem, we get the following martingale inequality.
\begin{cor}
If $2 < \pv < \infty$, or $\pv/2$ satisfies \eqref{eq71}, then
$$
\left\|f\right\|_{H_{\pv}^s} \leq c \left\|f\right\|_{H_{\pv}^S}.
$$
\end{cor}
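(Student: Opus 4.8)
The plan is to deduce the inequality from the vector-valued estimate of Theorem \ref{vect-ineq}, applied to the exponent vector $\pv/2$. The key observation is that the conditional quadratic variation is, by its very definition, a sum of conditional expectations of non-negative functions: with $d_n f := f_n - f_{n-1}$ and $d_0 f = 0$, one has $s(f)^2 = \sum_{n \in \N} \E_{n-1} |d_n f|^2$, whereas $S(f)^2 = \sum_{n \in \N} |d_n f|^2$. Since the mixed norm interacts with powers via $\| |g|^2 \|_{\pv/2} = \|g\|_{\pv}^2$ for non-negative $g$, the statement $\|f\|_{H_{\pv}^s} \leq c \|f\|_{H_{\pv}^S}$ is equivalent to $\| s(f)^2 \|_{\pv/2} \leq c \, \| S(f)^2 \|_{\pv/2}$, and this last inequality is precisely of the form controlled by Theorem \ref{vect-ineq}.

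To match the statement of Theorem \ref{vect-ineq}, which involves $\E_n$ rather than $\E_{n-1}$, I would perform a harmless index shift. Setting $F_n := |d_{n+1} f|^2$, a non-negative measurable sequence, and using $d_0 f = 0$, one gets $\sum_{n \in \N} \E_n(F_n) = \sum_{n \in \N} \E_n |d_{n+1} f|^2 = \sum_{m \geq 1} \E_{m-1} |d_m f|^2 = s(f)^2$ and $\sum_{n \in \N} F_n = \sum_{m \geq 1} |d_m f|^2 = S(f)^2$. Applying Theorem \ref{vect-ineq} with the exponent vector $\pv/2$ to the sequence $(F_n)$ then yields $\| \sum_{n} \E_n F_n \|_{\pv/2} \leq c \, \| \sum_{n} F_n \|_{\pv/2}$, that is $\| s(f)^2 \|_{\pv/2} \leq c \, \| S(f)^2 \|_{\pv/2}$, and taking square roots via the scaling identity completes the argument.

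The remaining point is simply to check that the hypotheses of Theorem \ref{vect-ineq} are satisfied by $\pv/2$. If $2 < \pv < \infty$, then $p_i > 2$ for each $i$, hence $1 < \pv/2 < \infty$, which is the first admissible case; the alternative hypothesis of the corollary is literally that $\pv/2$ satisfy \eqref{eq71}, the second admissible case. I expect the only real subtlety to be the bookkeeping between the $\E_{n-1}$ occurring in $s(f)^2$ and the $\E_n$ occurring in Theorem \ref{vect-ineq}, which the index shift above resolves; beyond that, the proof is just the mixed-norm scaling identity together with a single invocation of the previous theorem, so I anticipate no genuine obstacle.
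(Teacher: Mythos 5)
Your proposal is correct and is essentially identical to the paper's own proof: both apply Theorem \ref{vect-ineq} to the non-negative sequence $|d_nf|^2$ with the exponent vector $\pv/2$ and conclude via the scaling identity $\| |g|^2 \|_{\pv/2} = \|g\|_{\pv}^2$. Your explicit index shift from $\E_{n-1}$ to $\E_n$ is a minor bookkeeping point the paper passes over silently, but it does not change the argument.
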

\begin{proof}
Indeed, using Theorem \ref{vect-ineq} with the choice $f_{n} := \left| d_{n} f\right|^2$, we have
\begin{eqnarray*}
\left\| \left( \sum_{n \in \N} \mathbb{E}_{n-1} \left| d_{n}f\right|^2 \right)^{1/2} \right\|_{\pv} = \left\| \sum_{n \in \N} \mathbb{E}_{n-1} \left| d_{n}f\right|^2 \right\|_{\pv/2}^{1/2} \leq c \left\| \sum_{n \in \N} \left| d_{n}f\right|^2 \right\|_{\pv/2}^{1/2} = c \left\| S(f)\right\|_{\pv},
\end{eqnarray*}
which finishes the proof.
\end{proof}
To prove the Burkholder-Davis-Gundy inequality, we introduce the norm
$$
\|f\|_{\cG_{\pv}} := \left\| \sum_{n \in \N} |d_{n} f|  \right\|_{\pv}.
$$

\begin{lem}\label{lemma}
Suppose that $1 < \pv < \infty$ or $\pv$ satisfies \eqref{eq71}. If $f \in H_{\pv}^{S}$, then there exists $h \in \cG_{\pv}$ and $g \in \cQ_{\pv}$ such that $f = h + g$ and
$$
\|h\|_{\cG_{\pv}} \leq c \, \|f\|_{H_{\pv}^{S}} \qquad \mbox{and} \qquad \|g\|_{\cQ_{\pv}} \leq c \, \|f\|_{H_{\pv}^{S}}.
$$
\end{lem}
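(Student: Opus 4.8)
The plan is to establish a Davis-type decomposition. I set $\gamma_n := \sup_{k \le n} |d_k f|$; since each $d_k f$ is $\cF_k$-measurable, the sequence $(\gamma_n)$ is adapted and non-decreasing and $\gamma_{n-1}$ is $\cF_{n-1}$-measurable (hence predictable), with $\gamma_0 = 0$. I would then define the martingale $h$ through its differences
$$
d_n h := d_n f\, \chi_{\{|d_n f| > 2\gamma_{n-1}\}} - \E_{n-1}\big( d_n f\, \chi_{\{|d_n f| > 2\gamma_{n-1}\}}\big),
$$
and put $g := f - h$. Both $g$ and $h$ are martingales by construction, $f = g + h$, and $d_n g = d_n f\,\chi_{\{|d_n f|\le 2\gamma_{n-1}\}} + \E_{n-1}\big(d_n f\,\chi_{\{|d_n f|>2\gamma_{n-1}\}}\big)$. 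The two norm estimates are proved separately.

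For the $\cG_{\pv}$-bound on $h$, the key pointwise inequality is that on $\{|d_n f| > 2\gamma_{n-1}\}$ one has $\gamma_n = |d_n f| > 2\gamma_{n-1}$, so $\gamma_n - \gamma_{n-1} > |d_n f|/2$, giving $|d_n f|\,\chi_{\{|d_n f|>2\gamma_{n-1}\}} \le 2(\gamma_n - \gamma_{n-1})$ (off the set both sides vanish). Summing and telescoping yields
$$
\sum_n |d_n h| \le 2\gamma_\infty + 2\sum_n \E_{n-1}(\gamma_n - \gamma_{n-1}).
$$
Taking the $L_{\pv}$-norm, the first term is controlled by $\|\gamma_\infty\|_{\pv} \le \|S(f)\|_{\pv}$ (since $|d_k f| \le S(f)$ for every $k$), while the compensator sum is handled by Theorem \ref{vect-ineq} applied to the non-negative sequence $(\gamma_n - \gamma_{n-1})$, which gives $\big\|\sum_n \E_{n-1}(\gamma_n - \gamma_{n-1})\big\|_{\pv} \le c\,\|\gamma_\infty\|_{\pv}$. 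This produces $\|h\|_{\cG_{\pv}} \le c\,\|f\|_{H_{\pv}^{S}}$.

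For the $\cQ_{\pv}$-bound on $g$, I would dominate $S_n(g)$ by a predictable process. By the triangle inequality in $\ell^2$, $S_n(g) \le A_n + B_n$, where $A_n := \big(\sum_{k\le n}|d_k f|^2\chi_{\{|d_k f|\le 2\gamma_{k-1}\}}\big)^{1/2}$ and $B_n := \big(\sum_{k\le n}(\E_{k-1}(d_k f\,\chi_{\{|d_k f|>2\gamma_{k-1}\}}))^2\big)^{1/2}$. Here $B_n$ is already $\cF_{n-1}$-measurable, and from $A_n^2 = A_{n-1}^2 + |d_n f|^2\chi_{\{|d_n f|\le 2\gamma_{n-1}\}} \le A_{n-1}^2 + 4\gamma_{n-1}^2$ one gets $A_n \le A_{n-1} + 2\gamma_{n-1}$. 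Consequently the non-negative, non-decreasing, adapted sequence
$$
\lambda_n := A_n + 2\gamma_n + B_{n+1}
$$
is $\cF_n$-measurable and satisfies $S_{n+1}(g) \le A_{n+1} + B_{n+1} \le \lambda_n$, so $(\lambda_n) \in \Lambda$ is admissible for $g$. Since $A_\infty \le S(f)$, $\gamma_\infty \le S(f)$, and $B_\infty \le 2\sum_k \E_{k-1}(\gamma_k-\gamma_{k-1})$, we again bound $\|B_\infty\|_{\pv} \le c\,\|\gamma_\infty\|_{\pv} \le c\,\|S(f)\|_{\pv}$ by Theorem \ref{vect-ineq}; hence $\|g\|_{\cQ_{\pv}} \le \|\lambda_\infty\|_{\pv} \le c\,\|f\|_{H_{\pv}^{S}}$.

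The main obstacle is the $\cQ_{\pv}$-part: one must exhibit a \emph{predictable} majorant of $S(g)$ whose $L_{\pv}$-norm stays comparable to $\|S(f)\|_{\pv}$. The naive estimate $|d_n g| \le 4\gamma_{n-1}$ is too lossy, as it leads to $\sum_k \gamma_{k-1}^2$, which is not controlled by $S(f)$ when the jumps decrease; the correct idea is to retain the genuine jumps inside $A_n$, absorb only the single non-predictable last increment via $A_n \le A_{n-1} + 2\gamma_{n-1}$, and shift the compensator index to $B_{n+1}$ to restore $\cF_n$-measurability. The role of Theorem \ref{vect-ineq}, valid precisely under the stated hypothesis on $\pv$, is exactly to dominate the predictable compensator sums $\sum_n \E_{n-1}(\gamma_n-\gamma_{n-1})$ occurring in both estimates.
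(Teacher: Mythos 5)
Your proof is correct, and it is built on the same general strategy as the paper's — a Davis-type decomposition: truncate the differences on a ``large jump'' event, compensate with $\E_{k-1}$ to preserve the martingale property, bound the resulting $\ell^1$-part by telescoping increments of a non-decreasing adapted process, and control the compensator sum via Theorem \ref{vect-ineq} (which is exactly where the hypothesis on $\pv$ enters). The decomposition itself is different, though. The paper takes the control process $\lambda_n := S_n(f)$ and splits on $\{\lambda_k > 2\lambda_{k-1}\}$, while you take the classical Davis choice $\gamma_n := \sup_{k\le n}|d_kf|$ and split on $\{|d_kf| > 2\gamma_{k-1}\}$; the two $\cG_{\pv}$-estimates are then essentially identical. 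The real divergence is in the $\cQ_{\pv}$-part. The paper's choice buys the inequality $S_{n-1}(f)\le\lambda_{n-1}$ for free, so the crude chain $S_n(g)\le S_{n-1}(f)+S_{n-1}(h)+|d_ng|$ with $|d_ng|\le 4\lambda_{n-1}$ and $S_{n-1}(h)\le\sum_{k\le n-1}|d_kh|$ immediately yields a predictable majorant of norm $\le c\,\|\lambda_\infty\|_{\pv}$. Your $\gamma$ does not dominate $S(f)$, so this shortcut is unavailable, and you correctly replace it by the finer $A_n/B_n$ bookkeeping: the $\ell^2$-triangle inequality, the one-step absorption $A_{n+1}\le A_n+2\gamma_n$, and the index shift to $B_{n+1}$ to restore $\cF_n$-measurability. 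That extra step is genuinely necessary in your version and is carried out correctly, so both arguments are valid; the paper's choice of control process is simply the one that makes the $\cQ_{\pv}$-half shortest.
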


\begin{proof}
Let $f \in H_{\pv}^{S}$ and let $(\lambda_{n})$ be an adapted, non-decreasing sequence such that $\lambda_0=0$, $S_{n}f \leq \lambda_{n}$ and $\lambda_{\infty} \in L_{\pv}$. Let us define the functions
\begin{eqnarray*}
h_n &:=& \sum_{k = 1}^{n} \left( d_{k}f \chi_{ \{ \lambda_{k} > 2 \lambda_{k-1} \} } - \E_{k-1} \left( d_{k}f \chi_{ \{ \lambda_{k} > 2 \lambda_{k-1} \}} \right) \right),  \\
g_n &:=& \sum_{k = 1}^{n} \left( d_{k}f \chi_{ \{ \lambda_{k} \leq 2 \lambda_{k-1} \} } - \E_{k-1} \left( d_{k}f \chi_{ \{ \lambda_{k} \leq 2 \lambda_{k-1} \}} \right) \right).
\end{eqnarray*}
Then $f_n=h_n+g_n$ $(n \in \N)$. On the set $\{\lambda_k>2\lambda_{k-1}\}$,
we have $\lambda_k < 2(\lambda_k - \lambda_{k-1})$, henceforth
$$
|d_kf| \chi_{\{\lambda_k>2\lambda_{k-1}\}} \le\lambda_k \chi_{\{\lambda_k>2\lambda_{k-1}\}} \le 2(\lambda_k - \lambda_{k-1})
$$
and so
$$
\sum_{k=1}^{n} |d_{k}h|  \leq 2 \lambda_{n} + 2 \sum_{k=1}^{n} \E_{k-1}\left( \lambda_{k} - \lambda_{k-1} \right).
$$
Using Theorem \ref{vect-ineq}, we have that
$$
\|h\|_{\cG_{\pv}} \leq 2 \|\lambda_{\infty}\|_{\pv} + c \, \left\| \sum_{n \in \N} (\lambda_{n+1} - \lambda_{n}) \right\|_{\pv} \leq c \, \|\lambda_{\infty}\|_{\pv}.
$$
At the same time,

$$
|d_kf| \chi_{\{\lambda_k\le 2\lambda_{k-1}\}} \le \lambda_k \chi_{\{\lambda_k\le 2\lambda_{k-1}\}} \le 2\lambda_{k-1},
$$
which implies that $|d_kg| \le 4\lambda_{k-1}$. Then
\begin{align*}
	S_n(g) &\le S_{n-1}(g)+|d_ng| \le S_{n-1}(f)+S_{n-1}(h)+4\lambda_{n-1} \cr
&\le \lambda_{n-1} + 2\lambda_{n-1} +
2 \sum_{k=1}^{n-1} E_{k-1} (\lambda_k - \lambda_{k-1}) + 4\lambda_{n-1}
\end{align*}
and therefore
$$
\|g\|_{\cQ_{\pv}} \leq c \, \|\lambda_{\infty}\|_{\pv}.
$$
Choosing $\lambda_{n} := S_{n}(f)$, we get that
$$
\|h\|_{\cG_{\pv}} \leq c \, \|f\|_{H_{\pv}^{S}} \quad \mbox{and} \quad \|g\|_{\cQ_{\pv}} \leq c \, \|f\|_{H_{\pv}^{S}},
$$
which proves the theorem.
\end{proof}

A similar lemma can be proved for $H_{\pv}^{M}$ in the same way.

\begin{lem}\label{l1}
Suppose that $1 < \pv < \infty$ or $\pv$ satisfies \eqref{eq71}. If $f \in H_{\pv}^{M}$, then there exists $h \in \cG_{\pv}$ and $g \in \cP_{\pv}$ such that $f = h + g$ and
$$
\|h\|_{\cG_{\pv}} \leq c \, \|f\|_{H_{\pv}^{M}} \qquad \mbox{and} \qquad \|g\|_{\cP_{\pv}} \leq c \, \|f\|_{H_{\pv}^{M}}.
$$
\end{lem}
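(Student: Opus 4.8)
The plan is to repeat \emph{mutatis mutandis} the construction from the proof of Lemma~\ref{lemma}, replacing the quadratic variation $S$ by the maximal function $M$ and replacing the final control of $S_n(g)$ by a corresponding control of $|g_n|$, since the target space for $g$ is now $\cP_{\pv}$ rather than $\cQ_{\pv}$. Concretely, I would first fix an adapted, non-decreasing sequence $(\lambda_n)$ with $\lambda_0 = 0$, $|f_n| \leq \lambda_n$ (equivalently $M_n(f) \leq \lambda_n$) and $\lambda_\infty \in L_{\pv}$, and then define $h_n$ and $g_n$ by exactly the two formulas used in Lemma~\ref{lemma}, splitting each difference $d_k f$ according to whether $\lambda_k > 2\lambda_{k-1}$ or $\lambda_k \leq 2\lambda_{k-1}$ and subtracting the appropriate conditional expectation, so that $h=(h_n)$ and $g=(g_n)$ are martingales with $f = h + g$.

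For the $\cG_{\pv}$-estimate of $h$ I would argue as before, the only change being the elementary bound on a single difference: in place of $|d_k f| \leq S_k(f) \leq \lambda_k$ one uses $|d_k f| \leq |f_k| + |f_{k-1}| \leq 2\lambda_k$. On the set $\{\lambda_k > 2\lambda_{k-1}\}$ one still has $\lambda_k \leq 2(\lambda_k - \lambda_{k-1})$, hence $|d_k f|\chi_{\{\lambda_k > 2\lambda_{k-1}\}} \leq 4(\lambda_k - \lambda_{k-1})$; summing and adding the conditional-expectation terms yields $\sum_k |d_k h| \leq 4\lambda_\infty + 4\sum_k \E_{k-1}(\lambda_k - \lambda_{k-1})$. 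Applying Theorem~\ref{vect-ineq} to the predictable increments $\lambda_{k+1}-\lambda_k \geq 0$ then gives $\|h\|_{\cG_{\pv}} \leq c\|\lambda_\infty\|_{\pv}$, precisely as in Lemma~\ref{lemma}, and it is exactly here that the hypothesis $1 < \pv < \infty$ or \eqref{eq71} is used.

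The step requiring a little care, and which I expect to be the main obstacle, is producing a \emph{predictable} majorant for $|g_n|$ rather than for $S_n(g)$, so that $g$ lands in $\cP_{\pv}$. Here I would write $|g_n| \leq M_{n-1}(g) + |d_n g|$ and bound both pieces by $\mathcal{F}_{n-1}$-measurable quantities. On $\{\lambda_k \leq 2\lambda_{k-1}\}$ one has $|d_k f| \leq 2\lambda_k \leq 4\lambda_{k-1}$, whence $|d_k g| \leq 8\lambda_{k-1}$; moreover $M_{n-1}(g) \leq M_{n-1}(f) + M_{n-1}(h) \leq \lambda_{n-1} + \sum_{k=1}^{n-1}|d_k h|$, and the latter sum was already estimated in the previous paragraph by a non-decreasing, predictable expression. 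Collecting everything, $|g_n| \leq \mu_{n-1}$ with $\mu_{n-1} := c\lambda_{n-1} + c\sum_{k=1}^{n-1}\E_{k-1}(\lambda_k - \lambda_{k-1})$, which is non-negative, non-decreasing and adapted, so $g \in \cP_{\pv}$ and, again by Theorem~\ref{vect-ineq}, $\|g\|_{\cP_{\pv}} \leq \|\mu_\infty\|_{\pv} \leq c\|\lambda_\infty\|_{\pv}$. Finally, choosing $\lambda_n := M_n(f)$ gives $\|\lambda_\infty\|_{\pv} = \|f\|_{H_{\pv}^M}$, and the two claimed inequalities follow at once.
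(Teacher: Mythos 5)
Your proposal is correct and is precisely the argument the paper intends: it omits the proof of Lemma~\ref{l1} with the remark that it goes ``in the same way'' as Lemma~\ref{lemma}, and your adaptation (using $|d_kf|\le 2\lambda_k$ in place of $|d_kf|\le S_k(f)\le\lambda_k$, and majorizing $|g_n|$ by the predictable quantity $M_{n-1}(f)+M_{n-1}(h)+|d_ng|$ instead of majorizing $S_n(g)$) is exactly the right \emph{mutatis mutandis} version, with Theorem~\ref{vect-ineq} invoked at the same two places.
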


Now we are ready to generalize the well known Burkholder-Davis-Gundy inequality.

\begin{thm}\label{BDG}
If $1 < \pv < \infty$ or $\pv$ satisfies \eqref{eq71}, then the spaces $H_{\pv}^{S}$ and $H_{\pv}^{M}$ are equivalent, that is
$$
H_{\pv}^{S} = H_{\pv}^{M}
$$
with equivalent norms.
\end{thm}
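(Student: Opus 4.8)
The plan is to prove the two inclusions separately and symmetrically, using the decomposition lemmas together with the martingale inequalities already collected in the preceding corollary. The crucial elementary observation is that $\cG_{\pv}$ embeds continuously into both $H_{\pv}^{M}$ and $H_{\pv}^{S}$: since $M(f) = \sup_{n} \left| \sum_{k \leq n} d_{k}f \right| \leq \sum_{n \in \N} |d_{n}f|$ and $S(f) = \left( \sum_{n} |d_{n}f|^{2} \right)^{1/2} \leq \sum_{n \in \N} |d_{n}f|$ pointwise, taking the $\pv$-norm yields $\|f\|_{H_{\pv}^{M}} \leq \|f\|_{\cG_{\pv}}$ and $\|f\|_{H_{\pv}^{S}} \leq \|f\|_{\cG_{\pv}}$.

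To establish $\|f\|_{H_{\pv}^{M}} \leq c \|f\|_{H_{\pv}^{S}}$, I would start from $f \in H_{\pv}^{S}$ and apply Lemma \ref{lemma} to write $f = h + g$ with $h \in \cG_{\pv}$ and $g \in \cQ_{\pv}$, where both $\|h\|_{\cG_{\pv}}$ and $\|g\|_{\cQ_{\pv}}$ are bounded by $c \|f\|_{H_{\pv}^{S}}$. By the subadditivity of $M$ one has $\|f\|_{H_{\pv}^{M}} \leq \|h\|_{H_{\pv}^{M}} + \|g\|_{H_{\pv}^{M}}$. The first term is controlled by the embedding, $\|h\|_{H_{\pv}^{M}} \leq \|h\|_{\cG_{\pv}} \leq c \|f\|_{H_{\pv}^{S}}$, while the second is controlled by the martingale inequality $\|g\|_{H_{\pv}^{M}} \leq c \|g\|_{\cQ_{\pv}}$ from \eqref{mart-ineq-4}, hence again $\leq c \|f\|_{H_{\pv}^{S}}$. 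Adding the two bounds gives the claimed estimate.

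The reverse inequality $\|f\|_{H_{\pv}^{S}} \leq c \|f\|_{H_{\pv}^{M}}$ follows by the mirror argument: for $f \in H_{\pv}^{M}$ one uses Lemma \ref{l1} to split $f = h + g$ with $h \in \cG_{\pv}$ and $g \in \cP_{\pv}$, both of norm at most $c \|f\|_{H_{\pv}^{M}}$, then estimates $\|f\|_{H_{\pv}^{S}} \leq \|h\|_{H_{\pv}^{S}} + \|g\|_{H_{\pv}^{S}}$, bounding $\|h\|_{H_{\pv}^{S}} \leq \|h\|_{\cG_{\pv}}$ by the embedding and $\|g\|_{H_{\pv}^{S}} \leq c \|g\|_{\cP_{\pv}}$ by the other half of \eqref{mart-ineq-4}.

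The two inequalities together yield the equivalence of norms and thus $H_{\pv}^{S} = H_{\pv}^{M}$. There is essentially no remaining analytic obstacle: all the difficulty has been absorbed into Lemmas \ref{lemma} and \ref{l1} (which themselves rest on the vector-valued inequality of Theorem \ref{vect-ineq}) and into the martingale inequalities \eqref{mart-ineq-4}. The only points requiring minor care are the verification of the embeddings $\|f\|_{H_{\pv}^{M}}, \|f\|_{H_{\pv}^{S}} \leq \|f\|_{\cG_{\pv}}$ noted above, and the observation that the hypotheses on $\pv$ imposed by the two lemmas coincide exactly with those of the present theorem.
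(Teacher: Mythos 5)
Your proposal is correct and follows essentially the same route as the paper: decompose $f=h+g$ via Lemma \ref{lemma} (resp.\ Lemma \ref{l1}), bound the $h$-term by the pointwise embedding $M(h),S(h)\le\sum_n|d_nh|$ and the $g$-term by \eqref{mart-ineq-4}. The paper merely leaves the embedding and the reverse direction implicit, which you spell out.
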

\begin{proof}
Let $f \in H_{\pv}^{S}$. By Lemma \ref{lemma}, there exists $h \in \cG_{\pv}$ and $g \in \cQ_{\pv}$ such that $f = h + g$ and $\|h\|_{\cG_{\pv}} \leq c \, \|f\|_{H_{\pv}^{S}}$ and $\|g\|_{\cQ_{\pv}} \leq c \, \|f\|_{H_{\pv}^{S}}$.
Then
\begin{eqnarray*}
\|f\|_{H_{\pv}^{M}} \leq \|h\|_{H_{\pv}^{M}} + \|g\|_{H_{\pv}^{M}} \leq \|h\|_{\cG_{\pv}} + c \, \|g\|_{\cQ_{\pv}} \leq c \, \|f\|_{H_{\pv}^{S}}.
\end{eqnarray*}
The reverse inequality
$$
\|f\|_{H_{\pv}^{S}} \leq c \, \|f\|_{H_{\pv}^M}
$$
can be proved similarly.
\end{proof}

For a martingale $f$, the \emph{martingale transform} is defined by
$$
(\cT f)_{n} := \sum_{k=1}^{\infty} b_{k-1} \, d_{k}f,
$$
where $b_k$ are $\cF_{k}$-measurable and $|b_{k}| \leq 1$. The martingale transform is bounded on $L_{\pv}$, if $1 < \pv < \infty$.

\begin{thm}\label{martingale transform korlatos}
If $1 < \pv < \infty$, then for all $f \in L_{\pv}$,
$$
\| \cT f \|_{\pv} \leq c \|f\|_{\pv}.
$$
\end{thm}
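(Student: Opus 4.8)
The plan is to exploit the fact that the martingale transform contracts the quadratic variation pointwise, and then to transport this contraction to the $L_{\pv}$-norm through the square-function characterization that has already been assembled from Doob's inequality and the Burkholder--Davis--Gundy inequality.

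First I would observe that $\cT f$ is again a martingale whose differences are $d_{k}(\cT f) = b_{k-1} \, d_{k}f$. Since the multipliers satisfy $|b_{k-1}| \leq 1$ and are $\cF_{k-1}$-measurable, squaring and summing gives the pointwise estimate
$$
S(\cT f)^2 = \sum_{k=1}^{\infty} |b_{k-1}|^2 \, |d_{k}f|^2 \leq \sum_{k=1}^{\infty} |d_{k}f|^2 = S(f)^2,
$$
so that $S(\cT f) \leq S(f)$ everywhere on $\Omega$. Taking mixed norms of both sides yields at once
$$
\|\cT f\|_{H_{\pv}^{S}} = \|S(\cT f)\|_{\pv} \leq \|S(f)\|_{\pv} = \|f\|_{H_{\pv}^{S}}.
$$

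Next I would invoke the equivalences already established in the range $1 < \pv < \infty$. By the corollary following Theorem \ref{M korlatossaga}, $H_{\pv}^{M}$ coincides with $L_{\pv}$ with equivalent norms, and by the Burkholder--Davis--Gundy inequality (Theorem \ref{BDG}), $H_{\pv}^{M} = H_{\pv}^{S}$ with equivalent norms. Chaining these gives $\|g\|_{\pv} \sim \|S(g)\|_{\pv}$ for every $L_{\pv}$-bounded martingale $g$. Applying this first to $g = \cT f$ and then to $g = f$, together with the contraction displayed above, we obtain
$$
\|\cT f\|_{\pv} \sim \|S(\cT f)\|_{\pv} \leq \|S(f)\|_{\pv} \sim \|f\|_{\pv},
$$
which is the asserted bound.

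There is no serious analytic obstacle here: the heart of the matter, the inequality $S(\cT f) \leq S(f)$, is immediate from $|b_{k-1}| \leq 1$, and the substantive work has effectively been done in the preceding sections. The only point requiring a little care is the logical one, namely ensuring that $\cT f$ qualifies as an $L_{\pv}$-bounded martingale before the Hardy-space equivalences may be applied to it. This is handled automatically, since the finiteness of $\|S(\cT f)\|_{\pv}$ certifies that $\cT f \in H_{\pv}^{S}$, and the equivalence $H_{\pv}^{S} = L_{\pv}$---valid precisely when $1 < \pv < \infty$---then places $\cT f$ in $L_{\pv}$, so the left-hand norm is meaningful.
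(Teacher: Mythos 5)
Your proof is correct and follows essentially the same route as the paper: the pointwise contraction $S(\cT f)\leq S(f)$ from $|b_k|\leq 1$, combined with the Burkholder--Davis--Gundy equivalence $H_{\pv}^{S}=H_{\pv}^{M}$ (Theorem \ref{BDG}) and the Doob-inequality identification $H_{\pv}^{M}=L_{\pv}$. Your extra remark checking that $\cT f$ is a legitimate $L_{\pv}$-bounded martingale is a sensible tidying-up that the paper leaves implicit, but it does not change the argument.
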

\begin{proof}
Because of $|b_k| \leq 1$, it is clear that $S(\cT f) \leq S(f)$. By Theorem \ref{BDG}, the spaces $H_{\pv}^M$ and $H_{\pv}^S$ are equivalent. Therefore using Theorem \ref{M korlatossaga},
\begin{eqnarray*}
\|\cT f\|_{\pv} \leq \|\cT f\|_{H_{\pv}^M} \leq c \|\cT f\|_{H_{\pv}^S} \leq c \|f\|_{H_{\pv}^S} \leq c \|f\|_{H_{\pv}^M} \leq c \|f\|_{\pv},
\end{eqnarray*}
which proves the theorem.
\end{proof}

\nocite{}



\end{document}